\title[Analytic continuation]
{Analytic continuation of multiple polylogarithms in positive characteristic} 
\author{Hidekazu Furusho}
\address{Graduate School of Mathematics, Nagoya University, Chikusa-ku, Furo-cho, Nagoya, 464-8602,  Japan}
\email{furusho@math.nagoya-u.ac.jp}
\keywords{($t$-motivic) Carlitz (multiple) (star) polylogarithm, Carlitz module, $t$-module.}
\subjclass[2020]{11R58, 33E50}
\date{February 4, 2022}% 3rd minor revision for Tunisian J. Math
\newtheorem{thm}{Theorem}[subsection]
\newtheorem{lem}[thm]{Lemma}
\newtheorem{cor}[thm]{Corollary}
\newtheorem{prop}[thm]{Proposition}  
\theoremstyle{remark}
\theoremstyle{definition}
\newtheorem{defn}[thm]{Definition}
\newtheorem{rem}[thm]{Remark}
\numberwithin{equation}{section}
\numberwithin{figure}{section}
\newcommand{\GL}{\mathrm{GL}}
\newcommand{\Lie}{\mathrm{Lie}}
\newcommand{\Q}{\mathbb{Q}}
\newcommand{\C}{\mathbb{C}}
\newcommand{\Z}{\mathbb{Z}}
\newcommand{\N}{\mathbb{N}}
\newcommand{\F}{\mathbb{F}}
\newcommand{\TT}{\mathbb{T}}
\newcommand{\M}{\mathbb{M}}
\newcommand{\T}{\mathtt{T}}
\newcommand{\E}{\mathcal{E}}
\newcommand{\Li}{\mathrm{Li}}
\newcommand{\LLii}{\mathfrak{Li}}
\newcommand{\Exp}{\mathrm{Exp}}
\newcommand{\Log}{\mathrm{Log}}
\newcommand{\Mat}{\mathrm{Mat}}
\begin{document}
\bibliographystyle{amsalpha+}
\maketitle

%%%%%%%%%%%%%%%%%%%%%%%%%%%%%%%%%%%%%%%%%%%%%%%%%%%%%%%%%%%%%%%%%%%%%%
\begin{abstract}
Our aim of this paper is to propose
%introduce 
a method of analytic continuation of 
%($t$-adic) 
Carlitz multiple  (star) polylogarithms
to the whole space by using Artin-Schreier equation
and present a treatment of their branches
by introducing the notion of monodromy modules.
As applications of this method, we obtain
%By exploiting this method,
(1) %we  give 
a method of continuation %to the whole space
of the logarithms of  %$t$-modules associated with  
higher tensor powers of  Carlitz module,
(2) the orthogonal property %
(Chang-Mishiba functional relations), 
%associated with %the Anderson-Thakur's dual $t$-motives
%Carlitz multiple star polylogarithms
%is presented 
%We also show that 
(3) %that Eulerian property is shared by all the branches in common.
a branch independency of the Eulerian property.
%Chang-Papanikolas-Yu's theorem  holds without assuming their convergence condition.
\end{abstract}

\tableofcontents
%%%%%%%%%%%%%%%%%%%%%%%%%%%%%%%%%%%%%%%%%%%%%%%%%%%%%%%%%%%%%%%%%%%%%%%
\setcounter{section}{-1}
\section{Introduction}\label{introduction}
It is said that the history of study of the  polylogarithm  
goes  back to the  the correspondence of Leibniz with Bernoulli in %17th century %
1696.
%which is on an integral representation of the dilogarithm.
%It is said that the classical polylogarithm  
%were introduced between the correspondence of Leibniz with Bernoulli in 1696.
%
The {\it polylogarithm} 
%$\Li_n(z)$ for an integer $n\geqslant 1$
is the complex function defined by the following series:
$$
\Li_n(z)=\sum_{i=1}^\infty\frac{z^i}{i^n}
$$
with a positive integer $n\geqslant 1$.
The case for $n=1$ gives $\Li_1(z)=-\log(1-z)$ and 
that for $n=2$ gives the dilogarithm.
Though it converges on $|z|<1$, it can be analytically continued to 
a bigger region, in precise a covering of 
${\mathbb P}^1(\C)\setminus\{0,1,\infty\}$,
by iterated path integrals.
It is significant in number theory that
its special value at $z=1$, that is, its limit value $z\to 1$, attains the Riemann zeta value 
$\zeta(n)=\sum_{i=1}^\infty\frac{1}{i^n}$ ($n>1$).
The function is generalized to 
the {\it multiple polylogarithm}
which is defined by the following series:  
$$
\Li_{n_1,\dots,n_d}(z_1,\dots,z_d)=\sum_{0< i_1<\cdots<i_d}
\frac{z_1^{i_1}\cdots z_d^{i_d}}{{i_1}^{n_1}\cdots {i_d}^{n_d}}
$$
with %positive integers 
$n_1,\dots, n_d\geqslant 1$.
Though it converges when 
%$|z_k\cdots z_d|<1$ 
$|z_k|<1$
for $k=1,\dots,d$,
it can be analytically continued to a bigger region
by iterated integrals (cf. \cite{Z}).
It is remarkable that 
its special value at $z_1=\cdots=z_d=1$ gives the multiple zeta value
$$\zeta(n_1,\dots, n_d)=\sum_{0< i_1<\cdots<i_d}
\frac{1}{{i_1}^{n_1}\cdots {i_d}^{n_d}}$$ 
when $n_d>1$ (the condition to converge).

%In analogy with the arithmetic of number fields and that of the function field
While in the case of the global function field in positive characteristic,
Carlitz introduced Carlitz zeta value $\zeta_C(n)$ ($n\geqslant 1$) around 1935, 
which is regarded to be an analogue of the Riemann zeta  value $\zeta(n)$.
Anderson and Thakur \cite{AT90} considered 
the {\it Carlitz polylogarithm}
(denoted $\Li_n(z)$ by abuse of notation)
as an analogue of the above polylogarithm, 
%which is the case of $d=1$ in \eqref{eq:CMPL}
which is defined by  the series
\begin{equation*}\label{eq:CPL}
\Li_{n}(z)=\sum_{i=0}^\infty
\frac{z^{q^i}}{L_i^n}
\in \C_\infty[[z]]
\end{equation*}
(consult \S \ref{sec:preparation} for these symbols).
The function converges on  $|z|_\infty< q^\frac{nq}{q-1}$.
They  showed that $\zeta_A(n)$ is given by a certain linear combination of
its special value at some algebraic numbers lying on the region of convergence.
Thakur \cite{T-book} introduced an analogue $\zeta_A(n_1,\dots,n_d)$
($n_1,\dots,n_d\geqslant 1$) of
multiple zeta value which generalizes the Carlitz zeta value.
Chang \cite{C14} generalized the Carlitz polylogarithm to 
the {\it Carlitz multiple polylogarithm}
(denoted $\Li_{n_1,\dots,n_d}(z_1,\dots,z_d)$ by abuse of notation)
which is defined by the series %\eqref{eq:CMPL} 
\begin{equation*}\label{eq:CMPL}
\Li_{n_1,\dots,n_d}(z_1,\dots,z_d)=\sum_{0\leqslant i_1<\cdots<i_d}
\frac{z_1^{q^{i_1}}\cdots z_d^{q^{i_d}}}{L_{i_1}^{n_1}\cdots L_{i_d}^{n_d}}
\in \C_\infty[[z_1,\dots,z_d]]
\end{equation*}
in the region of convergence $\mathbb D$ 
(cf. \eqref{eq:D})
and he further showed that   $\zeta_C(n_1,\dots,n_d)$ is given
by a certain linear combination of
its special value at some algebraic numbers lying on $\mathbb D$.
%the region of convergence.
%Its relationship with 
Its star variant \eqref{eq:CMSPL} was introduced and discussed
in Chang-Mishiba \cite{CM}.
Its relationship with Anderson dual $t$-motives  and $t$-modules
is developed in \cite{CPY, CGM,GN}.

The aim of this paper is to extend the regions of convergence of
($t$-motivic) Carlitz multiple (star) polylogarithms by using Artin-Schreier
equations which serve as a substitute of iterated path integrals.
In \S \ref{sec:anacon}, we extend the functions %Carlitz multiple (star) polylogarithms
by using Artin-Schreier equations and 
explain a manipulation of their associated branches
by introducing the notion of monodromy modules. 
In \S \ref{sec:applications},
by exploiting this method,
we  give a method of continuation %to the whole space
%analytically extend
of the logarithms of $t$-modules associated with  
higher tensor powers of Carlitz module, % in \S \ref{sec:LogCn}.
%(2) the orthogonal property %
%We show that
analytic continuation of
Chang-Mishiba functional relations, and 
%are preserved  under analytic continuation 
%in \S \ref{subsec:orthogonality} and
%associated with %the Anderson-Thakur's dual $t$-motives
%Carlitz multiple star polylogarithms
%is presented 
%We also show that 
%Eulerian property of Carlitz multiple (star) polylogarithms
%is shared by all the branches in common
%in \S \ref{sec:CPY revisited}.
a branch independency of the Eulerian property.
%Chang-Papanikolas-Yu's theorem  holds without assuming their convergence condition.

%%%%%%%%%%%%%%%%%%%%%%%%%%%%%%%%%%%%%%%%%%%%%%%%%%%%%%%%%%%%%%%%%%%%%%
\section{Analytic continuation of Carlitz multiple polylogarithms}
\label{sec:anacon}
We explain a method of analytic continuation
of the Carlitz multiple (star) polylogarithm %to the whole space
by using Artin-Schreier equation.
In \S \ref{sec:preparation}, we prepare the notations to be used and also present a key lemma (Lemma \ref{lem:AS}) related to Artin-Schreier equation.
In \S \ref{sec:CM(S)PL}, we recall the definition of 
the ($t$-motivic) Carlitz multiple (star) polylogarithm. 
In \S \ref{sec:prolong PL}, we explain a method of 
continuation of the Carlitz  polylogarithm.
By extending the method, we give an analytic continuation of
the Carlitz multiple polylogarithm in \S \ref{sec:prolong MPL}
and the Carlitz multiple star polylogarithm in \S \ref{sec:prolong MSPL}
both as one variable functions.

\subsection{Preparation}\label{sec:preparation}
In this paper the following notation is employed.
\begin{itemize}
\item $\N$: the set of positive integers
\item $\F_q$: the field with $q$ elements, for $q$ a power of a prime number $p$
\item $A=\F_q[\theta]$: the polynomial ring in the variable $\theta$ over $\F_q$
\item $A_+$: the set of monic polynomials in $A$, which is an analogue of the set of positive integers $\N=\Z_{>0}$
\item  $K$:  the fraction field of $A$ 
\item $\infty$: the infinite place of $K$ with an associated absolute value $|\cdot|_\infty$ such that $|\theta|_\infty=q$
\item $K_\infty=\F_q((1/\theta))$: the $\infty$-adic completion of $K$
\item $\C_\infty$: the $\infty$-adic completion of the algebraic closure $\bar K_\infty$
%\item
%$||Z||_\infty:=\max_i\{|a_i|_\infty\}$ for  a polynomial $Z=\sum a_it^i\in\C_\infty [t]$
\item $\TT$: the Tate algebra with respect to another parameter $t$, the ring of 
formal power series $f=\sum a_it^i\in\C_\infty[[t]]$ convergent on $|t|_\infty\leqslant 1$,
encoded with the Gauss norm given by 
$||f||_\infty:=\max_i\{|a_i|_\infty\}$
\item $\TT_r$ ($r\in q^\Q$): the subalgebra of $\C_\infty[[t]]$ which converges on $|t|_\infty\leqslant r$, so $\TT=\TT_1$. 
\item 
$\TT(\infty)$: the intersection of $\TT_r$ for  all ${r\in q^\Q}$,
the set of formal power series
$\sum_{i=0}^\infty a_it^i\in\C_\infty[[t]]$ such that
$\lim_{n\to\infty}\sqrt[n]{|a_n|_\infty}=0$
\item $\E$: the ring of {\it entire functions}, that is,
formal power series
$f=\sum_{i=0}^\infty a_it^i\in\bar K[[t]]$ such that 
$f\in\TT(\infty)$
%, that is, 
%$\lim_{n\to\infty}\sqrt[n]{|a_n|_\infty}=0$ 
and $[K_\infty(a_0,a_1,a_2,\dots):K_\infty]<\infty$.
\item The {\it $n$-fold Frobenius twisting} ($n\in \Z$) on the field $\C_\infty((t))$
is defined by  $f=\sum_ia_it^i\in \C_\infty((t)) \mapsto
f^{(n)}=\sum_ia_i^{q^n}t^i\in \C_\infty((t))$
\item  $\wp:\TT\to\TT$ is the $\F_q[t]$-linear map sending $f\mapsto f-f^{(1)}$
%\item $L_0=1$ and $L_i=(\theta-\theta^{q})\cdots (\theta-\theta^{q^i})\in K$ for $i\geqslant 1$
%\item $\mathbb{L}_0=1$ and $\mathbb{L}_i=(t-\theta^{q})\cdots (t-\theta^{q^i})\in K[t]$ for $i\geqslant 1$
\end{itemize}

%Consider the map $\wp:\TT\to\TT$ sending $F\mapstoF-F^{(1)}$.
The following lemma plays an essential role  in this section.

\begin{lem}\label{lem:AS}
\rm{(1).} The map $\wp:\TT\to\TT$ is surjective and
the inverse $\wp^{-1}(h)$ for each $h\in\TT$ is given by $h'+\F_q[t]$ for some  $h'\in\TT$.

\rm{(2).} For any $f\in\TT$, $f$ and $\wp(f)$ have a same radius of convergence.

\rm{(3).} If $V$ is an $\F_q[t]$-submodule of $\TT$, then so is $\wp^{-1}(V)$.

\rm{(4).} $\wp^{-1}(\E)=\E$.
\end{lem}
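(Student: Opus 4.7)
The plan is to reduce all four assertions to a termwise analysis of the Artin--Schreier equation $x - x^q = c$ in $\C_\infty$. The non-archimedean bookkeeping I will rely on is: if $|c|_\infty < 1$ then there is a solution with $|x|_\infty = |c|_\infty$ (the other $q-1$ solutions, obtained by adding elements of $\F_q$, have $|x|_\infty = 1$), while if $|c|_\infty > 1$ then every solution satisfies $|x|_\infty = |c|_\infty^{1/q}$. This dichotomy controls coefficient growth throughout.

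For (1), I would apply the above termwise: given $h = \sum_i h_i t^i \in \TT$, use algebraic closedness of $\C_\infty$ to pick, for each $i$, a root $a_i$ of $a_i - a_i^q = h_i$. Since $|h_i|_\infty \to 0$, selecting the small-norm solution (so that $|a_i|_\infty = |h_i|_\infty$ for the cofinitely many $i$ with $|h_i|_\infty < 1$) yields $f := \sum_i a_i t^i \in \TT$ with $\wp(f) = h$. For the kernel description, $f = \sum a_i t^i \in \TT$ lies in $\ker\wp$ iff $a_i^q = a_i$, i.e.\ $a_i \in \F_q$, for every $i$; convergence in $\TT$ forces $a_i = 0$ eventually, so $\ker\wp = \F_q[t]$ and $\wp^{-1}(h) = h' + \F_q[t]$.

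For (2), I would reuse the same estimate: for $f = \sum a_i t^i \in \TT$, the decay $|a_i|_\infty \to 0$ means $|a_i|_\infty < 1$ eventually, and then $|a_i - a_i^q|_\infty = |a_i|_\infty$. Consequently $\limsup_i |a_i|_\infty^{1/i} = \limsup_i |a_i - a_i^q|_\infty^{1/i}$, so $f$ and $\wp(f)$ share the same radius of convergence. Assertion (3) is then immediate: $\wp$ is $\F_q[t]$-linear because $\alpha^{(1)} = \alpha$ for every $\alpha \in \F_q[t]$, so $\wp^{-1}(V)$ is an $\F_q[t]$-submodule whenever $V$ is one.

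The step requiring the most care is (4). The inclusion $\E \subseteq \wp^{-1}(\E)$ is straightforward, as the Frobenius twist preserves both defining properties of $\E$ (super-geometric decay of coefficients and the finite-extension condition). For the reverse inclusion, given $h = \sum h_i t^i \in \E$ with coefficients lying in a fixed finite extension $L/K_\infty$, I would build a preimage $f \in \E$ explicitly: for each $i$ with $|h_i|_\infty < 1$, set $a_i := \sum_{j \geqslant 0} h_i^{q^j}$, which converges in $L$ by completeness, satisfies $a_i - a_i^q = h_i$, and obeys $|a_i|_\infty = |h_i|_\infty$ by the ultrametric inequality; for the finitely many exceptional indices, adjoin Artin--Schreier roots to obtain a finite extension $L' \supseteq L$ containing the remaining $a_i$. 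Super-geometric decay of $|a_i|_\infty$ is then inherited from that of $|h_i|_\infty$, so $f \in \E$, and by (1) every other preimage of $h$ differs from $f$ by an element of $\F_q[t] \subset \E$. The delicate point is keeping the coefficient field uniformly finite over $K_\infty$, and the geometric-series construction handles this automatically in the cofinite regime.
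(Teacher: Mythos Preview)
Your proof is correct and follows essentially the same termwise Artin--Schreier approach as the paper: solve $b_i - b_i^q = a_i$ coefficient by coefficient, use the norm equality $|b_i|_\infty = |a_i|_\infty$ for large $i$ to control convergence and the radius, and for (4) argue that the solutions with small parameter remain in the given finite extension of $K_\infty$. The only cosmetic difference is in (4): the paper invokes the local-field presentation $K'_\infty = \F'((1/\theta'))$ and observes that Artin--Schreier equations with parameter in the maximal ideal split there, whereas you write down the explicit convergent series $a_i = \sum_{j\geqslant 0} h_i^{q^j}$; these are two phrasings of the same fact, and your version is arguably more self-contained.
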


\begin{proof}
For $f=\sum_i a_it^i\in\TT$ with $a_i\in \C_\infty$,
we calculate its inverse image $g=\sum_ib_it^i$ by solving the 
following Artin-Schreier type equation
\begin{equation}\label{eq:AS}
b_i-b_i^q=a_i
\end{equation}
for each $i$. % induced from  \eqref{eq:func eq for Li0}.
Though solutions of the above equation are  unique modulo $\F_q$ for each $i$,
we see that  $g$ is uniquely determined modulo $\F_q[t]$
because we impose the condition $g\in\TT$.
It is immediate to see that $g$ belongs to $\TT$
because we have
\begin{equation}\label{eq:norm equality}
|b_i|_\infty=|a_i|_\infty
\end{equation}
for all sufficiently large $i$'s
by the above Artin-Schreier equation and $\sum_i a_it^i\in\TT$.
Whence (1) is proved.
(2) follows from \eqref{eq:norm equality}.
(3) is immediate because $\wp$ is $\F_q[t]$-linear.

Suppose that $f$ is in $\E$. Then by \eqref{eq:norm equality},
we see that the inverse image $g$ satisfies the first condition of $\E$.
Put $K'_\infty:=K_\infty(a_0,a_1,a_2,\dots)$.
Then $K'_\infty$ is presented as the field of  Laurent series 
$\F'((\frac{1}{\theta'}))$ with a finite extension $\F'$ of $\F_q$ and an element $\theta'\in\C_\infty$.
Since all the solutions of  the equation \eqref{eq:AS} lie in   $K'_\infty$
whenever $a_i$ lies in a maximal ideal of $K'_\infty$,
we see that $K'_\infty(b_0,b_1,b_2,\dots)$ is a finite extension of
$K_\infty(a_0,a_1,a_2,\dots)$. 
Thus $g$ is in $\E$. (4) is proved.
\end{proof}

%%%%%%%%%%%%%%%%%%%%%%%%%%%%%%%%%%%%%%%%%%%%%
\subsection{Carlitz multiple (star) polylogarithms}
\label{sec:CM(S)PL}
We  recall the definition of Carlitz multiple (star) polylogarithms
%$\Li_{n_1,\dots,n_d}^{(\star)}(z_1,\dots,z_d)$
and also their $t$-motivic variants.
%$\LLii_{n_1,\dots,n_d}^{(\star)}(Z_1,\dots,Z_d)$.

Throughout this paper we fix a $(q-1)$-th root of $-\theta$.
We consider the function
$$
\Omega=
\Omega(t):=(-\theta)^{\frac{-q}{q-1}}\prod_{i=1}^\infty(1-\frac{t}{\theta^{q^i}})
\in \C_\infty[[t]].
$$
It is an entire function, namely it belongs to $\mathcal E$ and $\TT^\times$,
and satisfies the difference equation
\begin{equation*}
\Omega^{(-1)}(t)=(t-\theta)\Omega(t).
\end{equation*}
The value
$$\tilde \pi:=\frac{1}{\Omega(\theta)}$$
is a period of Carlitz module  (cf. \cite{AT90, T-book}).

The {\it Carlitz multiple polylogarithm} (CMPL)
and {\it Carlitz multiple star polylogarithm} (CMSPL),
introduced in \cite{C14, CM},
are defined by the following power series respectively
\begin{equation}\label{eq:CMPL}
\Li_{n_1,\dots,n_d}(z_1,\dots,z_d)=\sum_{0\leqslant i_1<\cdots<i_d}
\frac{z_1^{q^{i_1}}\cdots z_d^{q^{i_d}}}{L_{i_1}^{n_1}\cdots L_{i_d}^{n_d}}
\in \C_\infty[[z_1,\dots,z_d]]
\end{equation}
and
\begin{equation}\label{eq:CMSPL}
\Li^\star_{n_1,\dots,n_d}(z_1,\dots,z_d)=\sum_{0\leqslant i_1\leqslant \cdots \leqslant i_d}
\frac{z_1^{q^{i_1}}\cdots z_d^{q^{i_d}}}{L_{i_1}^{n_1}\cdots L_{i_d}^{n_d}}
\in \C_\infty[[z_1,\dots,z_d]]
\end{equation}
for $n_1,\dots,n_d,d\in\N$,
where
$L_0:=1$ and $L_i:=(\theta-\theta^{q})\cdots (\theta-\theta^{q^i})\in K$ for $i\geqslant 1$.
When $d=1$, they coincide and recover
the  Carlitz polylogarithm of Anderson-Thakur \cite[\S 2.1]{AT90}.
By \cite[\S 5.1]{C14}, CMPL converges in the region %of  $\C_\infty^d$ 
\begin{equation}\label{eq:D}
\mathbb D=\left\{(z_i)\in \C_\infty^d\bigm|
|z_1/\theta^{\frac{qn_1}{q-1}}|_\infty^{q^{i_1}}\cdots
|z_d/\theta^{\frac{qn_d}{q-1}}|_\infty^{q^{i_d}}\to 0
\text{ as }
0\leqslant i_1<\cdots<i_d\to \infty\right\}
\end{equation}
and CMSPL converges in the similar region $\mathbb D^\star$
replacing $<$ with $\leqslant$,
both of which contain the polydisk
$\mathbb D'=\{(z_i)\in \C_\infty^d\bigm| |z_i|_\infty <q^{\frac{n_iq}{q-1}}\}$. 

%The Tate algebra $\TT$ means the ring of 
%formal power series in $\C_\infty[[t]]$ convergent on $|t|_\infty\leqslant 1$.
%The $i$-th Frobenius twisting ($i\in \Z$) on the filed $\C_\infty((t))$
%is defined by  $f=\sum_kc_kt^k\in \C_\infty((t)) \mapsto
%f^{(i)}=\sum_kc_k^{q^j}t^k\in \C_\infty((t))$.

For a fixed $d$-tuple of $(Z_1,\dots,Z_d)\in\TT^d$,
the {\it $t$-motivic CMPL} and {\it $t$-motivic CMSPL}
(cf. \cite{CGM})
are defined by the following series respectively
\begin{align}\label{eq:LLii}
\LLii_{n_1,\dots,n_d}(Z_1,\dots,Z_d)
&=\Omega^{-n_1-\cdots-n_d}\sum_{0\leqslant i_1<\cdots<i_d}
(\Omega^{n_1}Z_1)^{(i_1)}\cdots
(\Omega^{n_d}Z_d)^{(i_d)}
\\
&=\sum_{0\leqslant i_1<\cdots<i_d}
\frac{Z_1^{(i_1)}\cdots Z_d^{(i_d)}}
{\mathbb{L}_{i_1}^{n_1}\cdots \mathbb{L}_{i_d}^{n_d}}
%\qquad \in \C_\infty[[t]]
, \notag
%\frac{z_1^{q^{i_1}}\cdots z_k^{q^{i_k}}}{\LL_{i_1}^{n_1}\cdots \LL_{i_k}^{n_k}}
%\in \C_\infty[[t,z_1,\dots,z_k]]
\end{align}
\begin{align}\label{eq:LLii star}
\LLii_{n_1,\dots,n_d}^\star(Z_1,\dots,Z_d)
&=\Omega^{-n_1-\cdots-n_d}\sum_{0\leqslant i_1\leqslant\cdots\leqslant i_d}
(\Omega^{n_1}Z_1)^{(i_1)}\cdots
(\Omega^{n_d}Z_d)^{(i_d)}
\\
&=\sum_{0\leqslant i_1\leqslant\cdots\leqslant i_d}
\frac{Z_1^{(i_1)}\cdots Z_d^{(i_d)}}
{\mathbb{L}_{i_1}^{n_1}\cdots \mathbb{L}_{i_d}^{n_d}}
%\qquad \in \C_\infty[[t]]
, \notag
\end{align}
where
$\mathbb{L}_0=1$ and $\mathbb{L}_i=(t-\theta^{q})\cdots (t-\theta^{q^i})\in K[t]$ for $i\geqslant 1$.
They coincide when $d=1$.
The $t$-motivic CMPL converges with respect to the Gauss norm when
\begin{equation}\label{eq:condition for Z}
(||Z_1||_\infty/|\theta^{\frac{qn_1}{q-1}}|_\infty)^{q^{i_1}}\cdots
(||Z_d||_\infty/|\theta^{\frac{qn_d}{q-1}}|_\infty)^{q^{i_d}}
\to 0
\text{ as }
0\leqslant i_1<\cdots<i_d\to \infty.
\end{equation}
Similarly the $t$-motivic CMSPL converges in the same situation replacing
$<$ with $\leqslant$.
%Further when such $d$-tuple belongs to $\bar K [t]^d$, the function
%$\Omega^{n_1+\cdots+n_d}\LLii_{n_1,\dots,n_d}(Z_1,\dots,Z_d)$
%is	 an entire function with respect to  $t$ 
%%for a fixed $d$-tuple  $\mathfrak Z$ satisfing the above condition
%by \cite[Lemma 5.3.1]{C14}.
We remind that the substitution $t=\theta$ gives \eqref{eq:CMPL}
and \eqref{eq:CMSPL}.
We have
\begin{equation}\label{eq:recursive for LLii}
\LLii_{n_1,\dots,n_d}(Z_1,\dots,\Omega Z_k,\dots,Z_d)
=\Omega\cdot \LLii_{n_1,\dots,n_k+1,\dots,n_d}(Z_1,\dots,Z_d)
\end{equation}
\begin{equation}\label{eq:recursive for LLii star}
\LLii_{n_1,\dots,n_d}^\star(Z_1,\dots,\Omega Z_k,\dots,Z_d)
=\Omega\cdot \LLii_{n_1,\dots,n_k+1,\dots,n_d}^\star(Z_1,\dots,Z_d)
\end{equation}
for $k$ with $1\leqslant k\leqslant d$ by definition.

%%%%%%%%%%%%%%%%%%%%%%%%%%%%%%%%%%%%%%%%%%%%%
\subsection{Continuation of Carlitz polylogarithms}\label{sec:prolong PL}
We explain a method of continuation 
of the Carlitz polylogarithm to $\C_\infty$
and a treatment of branches, 
which  is an initial step for continuation of
the Carlitz multiple polylogarithm (explained in \S \ref{sec:prolong MPL})
and the star version (explained in \S \ref{sec:prolong MSPL}).
Our method consists of three steps.

\subsubsection{Algebraic step}
We introduce the following series
for $Z\in \TT$:
$$
\LLii_{0}(Z)=\sum_{i=0}^\infty{Z^{(i)}}
%\quad \in \C_\infty[[t]]
$$
which is \lq a $(d,n_d)=(1,0)$ version' of \eqref{eq:LLii}.
When $||Z||_\infty<1$, it converges and is $\F_q[t]$-linear with respect to $Z$.
We have
\begin{equation}\label{eq:func eq for Li0}
\LLii_{0}(Z)-\LLii_{0}(Z)^{(1)}=Z,
\end{equation}
that is,
\begin{equation}\label{eq:wp eq for Li0}
\wp(\LLii_{0}(Z))=Z.
\end{equation}
By \eqref{eq:func eq for Li0}
we remark that $\LLii_0(Z)$ converges to an algebraic function when $Z\in\C_\infty$ with $|Z|<1$.
Lemma \ref{lem:AS} enables us to associate each $Z\in\TT$ with 
$\LLii_0(Z)$ in the quotient $\F_q[t]$-module $\TT/ \F_q[t]$ by keeping the above equation, % \eqref{eq:func eq for Li0},
which yields the extended $\F_q[t]$-linear map 
$$\vec\LLii_0:\TT\to\TT/ \F_q[t].$$
%$\LLii_0:Z\mapsto \LLii_0(Z)$ to 
%$\TT\to\TT/ \F_q[t]$. % by  keeping the equation \eqref{eq:func eq for Li0}.
%For $Z=\sum_i a_it^i\in\TT$ with $a_i\in K$,
%we calculate $\LLii_0(Z)=:\sum_ib_it^i$ by solving the 
%following Artin-Schreier type equation
%\begin{equation*}\label{eq:AS}
%b_i-b_i^q=a_i
%\end{equation*}
%for each $i$ induced from  \eqref{eq:func eq for Li0}.
%Though solutions of the above equation are  unique modulo $\F_q$ for each $i$,
%we see that  $\LLii_0(Z)$ is uniquely determined modulo $\F_q[t]$
%because we impose the condition $\LLii_0(Z)\in\TT$.
%It is immediate to see that $\LLii_0(Z)$ belongs to $\TT$
%because we have
%\begin{equation}\label{eq:norm equality}
%|b_i|_\infty=|a_i|_\infty
%\end{equation}
%for all sufficiently large $i$'s
%by the above Artin-Schreier equation and $\sum_i a_it^i\in\TT$.
%From \eqref{eq:norm equality} we further learn  that our map restricts to  $\LLii_0:\E\to\E/\F_q[t]$. 
%<--maybe it may not correct!
A {\it branch} $\LLii_0^o:\TT\to \TT$ means an $\F_q$-linear %map %$\TT\to\TT$ which 
lift of $\vec\LLii_0$.
%For each $Z$ we call an $\F_q[t]$-linear lift of $\vec\LLii_0(Z)$ to $\TT$
%as {\it a branch} of $\vec\LLii_0(Z)$
%and occasionally denote it by $\LLii_0^o(Z)$.

We note that $\vec\LLii_0(Z)$ is congruent to  $\LLii_0^o(Z)$
modulo $\F_q[t]$  when $||Z||_\infty<1$.
By Lemma \ref{lem:AS}, any  $Z\in\TT$ and its any branch $\LLii_0^o(Z)$
%(any branch of $\vec\LLii_0(Z)$)
%\footnote{
%By abuse of notation, we often denote $\LLii_0(Z)$ to be a lift to $\TT$ of
%$\LLii_0(Z)\in \TT/\F_q[t]$.
%}
have a same radius of convergence.

\subsubsection{Analytic step}
%Secondly, %by considering that \eqref{eq:recursive for LLii} holds when \eqref{eq:condition for Z},
We consider the continuation of the $t$-motivic Carlitz polylogarithm 
by making use of the equality
\begin{equation}\label{eq:LLiin=OmegaLlii0}
\LLii_n(Z)=\Omega^{-n}\LLii_0(\Omega^nZ)
\end{equation}
deduced from \eqref{eq:recursive for LLii}.
%we consider the map
\begin{defn}
For $n\in\N$, we define the $\F_q[t]$-linear map
$$
\vec\LLii_n:\TT\to \TT/\Omega^{-n}\F_q[t]
$$
by sending $Z\in\TT$ to
\begin{equation}\label{eq:den LLii}
\vec\LLii_n(Z):=\Omega^{-n}\cdot\vec\LLii_0(\Omega^{n}Z)
\end{equation}
(N.B. $\Omega^{-n}\TT=\TT$).
%For each $Z$, we call a lift of $\vec\LLii_n(Z)$ to $\TT$ {\it a branch}and  occasionally denote it by $\LLii_n^o(Z)$.
A {\it branch} $\LLii_n^o:\TT\to \TT$ means an $\F_q$-linear %map %$\TT\to\TT$ which 
lift of $\vec\LLii_n$.
\end{defn}

By \eqref{eq:LLiin=OmegaLlii0},
%\eqref{eq:recursive for LLii}, 
it is congruent to %\eqref{eq:LLii} 
$\LLii_n(Z)$
modulo $\Omega^{-n}\F_q[t]$
when \eqref{eq:condition for Z} holds.
%We also learn that $\Omega^nZ$ and $\Omega^n\LLii_n^o(Z)=\LLii_0^o(\Omega^{n}Z)$,
%where $\LLii_0^o(\Omega^{n}Z)$ is a branch,
%have a same radius of convergence by Lemma \ref{lem:AS}.
%Particularly we have $\vec\LLii_n(z)\in\TT_q/\Omega^{-n}\F_q[t]$
%for $z\in\C_\infty$.
The following properties  will be used later. %in our later sections.
%We remark that the second claim is shown in \cite[Lemma 5.3.5]{C14}
%under a  convergence condition for $Z$ which is not assumed in our lemma.

\begin{lem}\label{lem:property of LLii}
Let $n\geqslant 1$ and $Z\in\TT$.
Let $\LLii_n^o(Z)$ be a branch. Then

\rm{(1).} $\Omega^n\cdot\LLii_n^o(Z)\in\E$  when $\Omega^nZ\in \E$.

\rm{(2).} $\Omega^n\LLii_n^o(Z)-(\Omega^n\LLii_n^o(Z))^{(1)}=\Omega^nZ$ when $Z\in\TT$.

\rm{(3).} $\left(\Omega^n\cdot\LLii_n^o(Z)\right)(\theta^{q^k})
=\left(\Omega^n\cdot\LLii^o_n(Z)\right)(\theta)^{q^k}$
for $k\geqslant 1$
when $\Omega^nZ\in\E$.
\end{lem}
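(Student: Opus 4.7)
The plan is to reduce all three parts to the case $n=0$ via the defining relation \eqref{eq:den LLii}. Concretely, once branches $\LLii_n^o$ of $\vec\LLii_n$ and $\LLii_0^o$ of $\vec\LLii_0$ are fixed, one has
$$
\Omega^n \LLii_n^o(Z) = \LLii_0^o(\Omega^n Z) + c(t)
$$
for some $c(t)\in\F_q[t]$ (depending on $Z$ and the choice of branches), because multiplication by $\Omega^n$ sends $\Omega^{-n}\F_q[t]$ onto $\F_q[t]$. Recall from \eqref{eq:wp eq for Li0} that every branch of $\vec\LLii_0$ satisfies the Artin-Schreier identity $\wp(\LLii_0^o(W))=W$ for $W\in\TT$, because $\wp$ annihilates $\F_q[t]$.

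For part (1), assume $\Omega^n Z\in\E$. Then $\wp(\LLii_0^o(\Omega^n Z))=\Omega^n Z\in\E$, so Lemma \ref{lem:AS}(4) yields $\LLii_0^o(\Omega^n Z)\in\E$; adding the polynomial $c(t)\in\F_q[t]\subset\E$ gives $\Omega^n\LLii_n^o(Z)\in\E$. For part (2), I would apply $\wp$ directly to the displayed relation: the left-hand side is exactly $\Omega^n\LLii_n^o(Z)-(\Omega^n\LLii_n^o(Z))^{(1)}$ by definition, while on the right $\wp(\LLii_0^o(\Omega^n Z))=\Omega^n Z$ by the Artin-Schreier identity and $\wp(c(t))=0$ since the coefficients of $c(t)$ lie in $\F_q$ and are fixed under Frobenius twisting.

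For part (3), set $F:=\Omega^n\LLii_n^o(Z)$, which by (1) is an entire function and so may be evaluated at any point of $\C_\infty$. By (2) one has $F-F^{(1)}=\Omega^n Z$. The key observation is that $\Omega(\theta^{q^j})=0$ for every $j\geqslant 1$, visible from $\Omega=(-\theta)^{-q/(q-1)}\prod_{i=1}^\infty(1-t/\theta^{q^i})$; hence $(\Omega^n Z)(\theta^{q^k})=0$ for $k\geqslant 1$, forcing $F(\theta^{q^k})=F^{(1)}(\theta^{q^k})$. A direct coefficient comparison with $F=\sum a_i t^i$ yields $F^{(1)}(\theta^{q^k})=\sum a_i^q\theta^{iq^k}=\bigl(\sum a_i\theta^{iq^{k-1}}\bigr)^q=F(\theta^{q^{k-1}})^q$, and iterating this identity from $k=1$ upward gives $F(\theta^{q^k})=F(\theta)^{q^k}$.

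I expect the only mild obstacle to lie in bookkeeping the $\F_q[t]$-ambiguity $c(t)$: it must be tracked that $c(t)$ contributes zero under $\wp$ and lies in $\E$, so that the reduction to the $n=0$ branch $\LLii_0^o(\Omega^n Z)$ is legitimate independently of which lift was chosen. Once this is in place, all three statements follow from Lemma \ref{lem:AS}, the Artin-Schreier equation \eqref{eq:wp eq for Li0}, and the vanishing of $\Omega$ along the Frobenius orbit $\{\theta^{q^j}\}_{j\geqslant 1}$.
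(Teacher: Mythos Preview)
Your proposal is correct and follows essentially the same route as the paper: reduce to the $n=0$ case via \eqref{eq:den LLii}, then invoke Lemma~\ref{lem:AS}(4), the Artin--Schreier identity \eqref{eq:wp eq for Li0}, and the vanishing $\Omega(\theta^{q^j})=0$ for $j\geqslant 1$ together with the formal identity $F^{(1)}(t^q)=F(t)^q$. Your explicit tracking of the $\F_q[t]$-ambiguity $c(t)$ is in fact slightly more careful than the paper, which simply \emph{declares} $\LLii_0^o(\Omega^nZ):=\Omega^n\LLii_n^o(Z)$ as the chosen branch at $\Omega^nZ$ and proceeds from there.
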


\begin{proof}
(1). It follows from Lemma \ref{lem:AS},
\eqref{eq:wp eq for Li0} and \eqref{eq:den LLii}
because we have $\Omega\in\E$.

(2).
Put $\LLii_0^o(\Omega^nZ)=\Omega^n\cdot\LLii_n^o(Z)$.
By \eqref{eq:func eq for Li0}, we have
$$
\LLii_0^o(\Omega^nZ)-\LLii_0^o(\Omega^nZ)^{(1)}=\Omega^nZ,
$$
which implies the equality.

(3)
By Lemma \ref{lem:AS}.(4) and \eqref{eq:func eq for Li0}, we have $\LLii_0^o(\Omega^nZ)\in\E$
when $\Omega^nZ\in\E$.
By evaluating $t=\theta^{q^{h+1}}$ to the above equality, we obtain
$$
\LLii_0^o(\Omega^nZ)(\theta^{q^{h+1}})
%\left(\Omega^n\cdot\vec\LLii_n(Z)\right)(\theta^q)
%=\left(\Omega^n\cdot\vec\LLii_n(Z)\right)(\theta)^q$
-\LLii_0^o(\Omega^nZ)(\theta^{q^h})^{q}=0
$$
for $h\geqslant 0$
because we have $F^{(1)}(t^q)=F(t)^q$ for any $F\in\C_\infty[[t]]$ and
$\Omega(\theta^{q^{h+1}})=0$.
Thus we obtain the formula.
\end{proof}

\subsubsection{Evaluation step}
%The final step is achieved 
By the evaluation of $t=\theta$,
we carry out 
the continuation of the Carlitz polylogarithm. % as follows:
\begin{defn}
For $n\in\N$, we define the $A$-linear map
$$\vec\Li_n:\C_\infty\to \C_\infty/\tilde\pi^nA$$ 
by a restriction of $\vec\LLii_n$ to
 $Z=z\in\C_\infty\subset \TT$ and a substitution of $t=\theta$ there
 %in $\vec\LLii_n$
(we note that $t=\theta$ is inside a region of convergence of $\vec\LLii_n(z)$
%by Lemma \ref{lem:property of LLii}
%because $\Omega$ is entire and $\Omega(\theta)\neq 0$
because $\Omega^n\LLii_n^o(Z)$ and $\Omega^nZ$ have a same radius of convergence
by Lemma \ref{lem:AS}.(2)).
%Again for each $z$ we call a lift of $\vec\Li_n(z)$ to $\C_\infty$ %is called 
%as {\it a branch} and denote occasionally by $\Li_n^o(z)$.
A {\it branch} $\Li_n^o:\C_\infty\to \C_\infty$ means an $\F_q$-linear lift of
$\vec\Li_n$.
% which is locally rigid analytic, that is,
%for each $z\in\C_\infty$ there exist a closed disk $U\subset \C_\infty$ containing $z$
%such that $\vec\Li_n|_U$ is described by a rigid analytic function.
\end{defn}

The following proposition ensures that $\vec\Li_n$ is an analytic continuation of $\Li_n$.

\begin{prop}\label{prop:ana con CPL}
%By our construction it agrees with %\eqref{eq:CMPL} 
{\rm (1).} $\vec\Li_n(z)\equiv\Li_n(z) \bmod \tilde\pi^nA$
when $z$ lies in $\mathbb D$.

{\rm (2).}
$\vec\Li_n$ locally admits an analytic  lift,  that is, for each
given  point $z\in \C_\infty$, there is  a small disk $U$ centered at $z$
in $\C_\infty$ and a lift $\vec\Li_n|_U:U \to \C_\infty$
which is given by a converging  power series.
%$\vec\Li_n$ is local analytic in the sense of rigid analysis, that is,
%for each $z\in\C_\infty$ there exist a closed disk $U\subset \C_\infty$ containing $z$ and a closed disk $V\subset \C_\infty$
%injected to $\C_\infty/\tilde\pi^nA$ and
%containing $\vec\Li_n(U)$,
%such that the induced map $U\to V$ is described by a rigid analytic function.
\end{prop}

\begin{proof}
{\rm (1).}
It follows from our construction.

{\rm (2).}
By our construction we have
$\vec\Li_n(z+w)\equiv\vec\Li_n(z)+\vec\Li_n(w)$.
We have $\vec\Li_n(w)\equiv\Li_n(w)$ for $w\in \mathbb{D}$ and $\Li_n(w)$ is rigid analytic on an appropriately smaller closed disk centered at $0$.
Then our claim follows because 
$\tilde\pi^nA$ is discrete in $\C_\infty$.
\end{proof}

\begin{rem}
By definition, difference of any two branches of the Carlitz polylogarithm
$\Li_n(z)$ is given by
$$\alpha\cdot\tilde\pi^n \qquad (\alpha\in A).$$
While it is worthy to recall
in the complex case (characteristic $0$ case),
difference of any two branches of
(analytically continued) polylogarithm  $\Li_n(z)$
is given by  a
$\Q$-linear combination of
 $$(2\pi i)^a\zeta(b)(\log z)^c$$
 with $a+b+c=n$.
\end{rem}

Here is an application of our method to evaluate an analytic continuation 
at a point  outside of the region of convergence of $\Li_n(z)$:

\begin{rem}
Let $n\in\N$ and $\alpha\in\C_\infty$.
Let $\sum_{i=0}^\infty a_it^i$ be the power series expansion of 
$\alpha\Omega^n\in \C_\infty[[t]]$.
Take a series $\{b_i\}_{i=0}^\infty$ such that
$\wp(b_i)=a_i$ for all $i$ 
and $|b_i|_\infty=|a_i|_\infty$ for all sufficiently large $i$'s.
Then  $\tilde\pi^n\sum_{i=0}^\infty b_i\theta^i\in\C_\infty$ gives a representative of lifts  of 
$\vec\Li_n(\alpha)\in\C_\infty/\tilde\pi^nA$.
\end{rem}

%%%%%%%%%%%%%%%%%%%%%%%%%%%%%%%%%%%%%%%%%%%%%
\subsection{Continuation of Carlitz multiple polylogarithms}\label{sec:prolong MPL}
By exploiting the method of continuation of Carlitz polylogarithm
developed in \S \ref{sec:prolong PL},
we extend the Carlitz multiple polylogarithm to $\C_\infty$
with a treatment of branches, that is, a monodromy module
by three steps in a similar fashion.

\subsubsection{Algebraic step}
We denote $\{0\}^d$ to be the multi-index where $0$ is repeated $d$-times
and consider the following series
for $Z_1,\dots, Z_d\in \TT$:
$$
\LLii_{\{0\}^d}(Z_1,\dots,Z_d)=\sum_{0\leqslant i_1<\cdots<i_d}
{Z_1^{(i_1)}}\cdots{Z_d^{(i_d)}}
%\quad \in \C_\infty[[t]]
$$
which is  \lq an $(n_1,\dots,n_d)=\{0\}^d$ version' of \eqref{eq:LLii}.
When \eqref{eq:condition for Z} holds for $(n_1,\dots,n_d)=\{0\}^{d}$,
it converges and  is $\F_q[t]$-linear with respect to 
$Z_1,\dots,Z_d$.
We observe the  following system of difference equations
\begin{equation}\label{eq:sys diff eq}
\wp(\LLii_{\{0\}^i}(Z_{d-i+1},\dots,Z_d))=
Z_{d-i+1}\cdot\LLii_{\{0\}^{i-1}}(Z_{d-i+2},\dots,Z_d)^{(1)}
\end{equation}
which they satisfy for $1\leqslant i\leqslant d$.
Here we put $\LLii_{\{0\}^0}=1$.
Again by \eqref{eq:sys diff eq}
we remark that $\LLii_{\{0\}^d}(Z_1,\dots,Z_d)$ converges to an algebraic function when $Z_1,\dots, Z_d$ are in $\C_\infty$ and $|Z_1|_\infty,\dots, |Z_d|_\infty$
are enough small.

We note that, by Lemma \ref{lem:AS}.(1), for any $Z_1,\dots, Z_d\in \TT$,
there always exists a solution of the above system 
\eqref{eq:sys diff eq}, % in $\TT^d$,
denoted by
\begin{align}\label{eqLio0dZ}
\vec\LLii_{\{0\}^{d}}^o( & Z_1,\dots, Z_d):= \\ \notag
&(
\LLii_{\{0\}^{1}}^o(Z_d),
\dots,
\LLii_{\{0\}^{d-1}}^o(Z_2,\dots, Z_d),
\LLii_{\{0\}^{d}}^o(Z_1,\dots, Z_d)
% \\&
%\LLii_{\{0\}^{2}}^o(Z_{d-1},Z_d),
)^\T
\in\TT^d,
\end{align}
and all the  solutions of the above system \eqref{eq:sys diff eq}
are described as  linear combinations
\begin{equation}\label{eq:solution linear combination}
\vec\LLii_{\{0\}^{d}}^o(Z_1,\dots, Z_d)+\sum_{k=0}^{d-1}
\alpha_k\cdot\vec\LLii_{\{0\}^{d}}^o(Z_1,\dots, Z_k)
\end{equation}
with $\alpha_k\in\F_q[t]$ and
$$
\vec\LLii_{\{0\}^{d}}^o(Z_1,\dots, Z_k)
:=
(
\{0\}^{d-k-1}, 1,
\LLii_{\{0\}^{1}}^o(Z_k),
\dots,
\LLii_{\{0\}^{k-1}}^o(Z_2,\dots, Z_k), 
\LLii_{\{0\}^{k}}^o(Z_1,\dots, Z_k)
)^\T
$$ 
in $\TT^d$
whose last $k$ components are
solutions of \eqref{eq:sys diff eq} with $d=k$.
When $k=0$, it means $(0,\dots,0,1)^\T$.

We put 
$\M_{\{0\}^d}^{Z_1,\dots,Z_{d-1}}$
to be the $\F_q[t]$-submodule of $\TT^d$ generated by the $d$ elements:
%$\vec\LLii_{\{0\}^{d}}^o(Z_1,\dots, Z_k)$ ($0\leqslant k\leqslant d-1$):
$$
\M_{\{0\}^d}^{Z_1,\dots,Z_{d-1}}:=\langle
\vec\LLii_{\{0\}^{d}}^o(Z_1,\dots, Z_k) \bigm| 0\leqslant k\leqslant d-1
\rangle_{\F_q[t]}.
$$
It follows from \eqref{eq:solution linear combination} that %the definition 
$\M_{\{0\}^d}^{Z_1,\dots,Z_{d-1}}$
is free from any choice of branches.
For fixed $Z_1,\dots, Z_{d-1}\in\TT$,
we obtain a well-defined $\F_q[t]$-linear map
$$
\vec\LLii_{\{0\}^{d}}(Z_1,\dots, Z_{d-1},-):\TT\to
\TT^d/\M_{\{0\}^d}^{Z_1,\dots,Z_{d-1}}.
$$
%For each $Z_d\in\TT$, %we call its lift to $\TT^d$
%{\it a branch} %and denote it by
%$\vec\LLii_{\{0\}^{d}}^o(Z_1,\dots, Z_d)\in\TT^d$
%means a lift of  $\vec\LLii_{\{0\}^{d}}(Z_1,\dots, Z_d)$.
A {\it branch} $\vec\LLii_{\{0\}^{d}}^o(Z_1,\dots, Z_{d-1},-):\TT\to \TT^d$
means an $\F_q$-linear lift of $\vec\LLii_{\{0\}^{d}}(Z_1,\dots, Z_{d-1},-)$.

We note that the vector \eqref{eqLio0dZ} is congruent to its \lq non-$o$' version
$$
%\vec\LLii_{\{0\}^{d}}(Z_1,\dots, Z_k):=
(
\{0\}^{d-k-1}, 1,
\LLii_{\{0\}^{1}}(Z_k),
\dots,
%\LLii_{\{0\}^{k-1}}(Z_2,\dots, Z_k), 
\LLii_{\{0\}^{k}}(Z_1,\dots, Z_k)
)^\T
$$
modulo $\M_{\{0\}^d}^{Z_1,\dots,Z_{d-1}}$ 
%with its \lq non-$o$' version 
when all components converge.

\subsubsection{Analytic step}
For $n_1,\dots,n_d\geqslant 1$, $Z_1,\dots,Z_d\in\TT$,
we put 
$$
\M_{n_1,\dots,n_d}^{Z_1,\dots,Z_{d-1}}:=
\Omega^{-n_1-\cdots-n_d}\M_{\{0\}^d}^{\Omega^{n_1}Z_1,\dots,\Omega^{n_{d-1}}Z_{d-1}}
$$
which is an $\F_q[t]$-submodule of $\TT^d$ because $\Omega^{-1}\in\TT$.
Then the continuation of the $t$-motivic Carlitz multiple polylogarithm 
is carried out as follows:

\begin{defn}\label{def:multiple LLii}
Let $n_1,\dots,n_d\in\N$.
For fixed $Z_1,\dots, Z_{d-1}\in\TT$, we define the $\F_q[t]$-linear map
%This gives a well-defined map, for fixed $Z_1,\dots, Z_{d-1}\in\TT$,
$$
\vec\LLii_{n_1,\dots,n_d}(Z_1,\dots,Z_{d-1},-): \TT\to\TT^d/\M_{n_1,\dots,n_d}^{Z_1,\dots,Z_{d-1}}
$$
sending $Z_d\in\TT$ to
\begin{align*}\label{eq:den multiple LLii}
\vec\LLii_{n_1,\dots,n_d} (Z_1,\dots,Z_d)
:=\Omega^{-n_1-\cdots-n_d}
\vec\LLii_{\{0\}^d}(\Omega^{n_1}Z_1,\dots,\Omega^{n_d}Z_d).
\end{align*}
%For each $Z_d\in\TT$, we call a lift of 
%$\vec\LLii_{n_1,\dots,n_d}(Z_1,\dots,Z_{d})$ %to $\TT^d$
%{\it a branch} $\vec\LLii_{n_1,\dots,n_d}^o(Z_1,\dots,Z_{d})$
%means a a lift of 
A {\it branch}
$\vec\LLii_{n_1,\dots,n_d}^o(Z_1,\dots,Z_{d-1},-):\TT\to\TT^d$
means an $\F_q$-linear lift of $\vec\LLii_{n_1,\dots,n_d}^o(Z_1,\dots,Z_{d-1},-)$
and, 
%{\it a branch}
for each $Z_d\in\TT$, 
%we call a lift of $\vec\LLii_{n_1,\dots,n_d}(Z_1,\dots,Z_{d})$
%$\vec\Li_{n_1,\dots,n_d}(z_1,\dots,z_{d})$ %to $\TT^d$
%{\it a branch} and denote it by
we denote
\begin{align*}
\vec\LLii_{n_1,\dots,n_d}^o(Z_1,\dots,Z_{d})= 
\bigl( &
\Omega^{-n_1-\dots-n_{d-1}}\LLii_{n_d}^o(Z_d),
\dots,
\Omega^{-n_1}\LLii_{n_2,\dots,n_d}^o (Z_2,\dots,Z_d),\\
&\qquad
\LLii_{n_1,\dots,n_d}^o (Z_1,\dots,Z_d)
\bigr)^\T\in\TT^d.
\end{align*}
\end{defn}

It turns that 
the module
$\M_{n_1,\dots,n_d}^{Z_1,\dots,Z_{d-1}}$
is %the $\F_q[t]$-submodule of $\TT^d$
generated by $d$ elements, in precise,
%$\Omega^{-n_1-\cdots-n_d}\cdot
%\vec\LLii_{\{0\}^{d}}^o(\Omega^{n_1}Z_1,\dots, \Omega^{n_k}Z_k)$;
$$
\M_{n_1,\dots,n_d}^{Z_1,\dots,Z_{d-1}}
=\langle
\vec\LLii_{n_1,\dots,n_d}^o(Z_1,\dots, Z_k) 
\bigm| 0\leqslant k\leqslant d-1
\rangle_{\F_q[t]},
$$
with
\begin{align*}
\vec\LLii_{n_1,\dots,n_d}^o&(Z_1, \dots, Z_k)
:= 
\Omega^{-n_1-\cdots-n_d}\cdot 
\vec\LLii_{\{0\}^{d}}^o(\Omega^{n_1}Z_1,\dots, \Omega^{n_k}Z_k)
  \\
&=\Omega^{-n_{k+1}-\cdots-n_d}\cdot
\Bigl(
\{0\}^{d-k-1},
\Omega^{-n_1-\dots-n_{k}},
\Omega^{-n_1-\dots-n_{k-1}}\LLii_{n_k}^o(Z_k), \\
&\qquad\qquad\dots,
 \Omega^{-n_1}\LLii_{n_2,\dots,n_k}^o (Z_2,\dots,Z_k), 
\LLii_{n_1,\dots,n_k}^o (Z_1,\dots,Z_k)
\Bigr)^\T
\in\TT^d
\end{align*}
with $0\leqslant k\leqslant d-1$.
%Namely we have
%$$
%\M_{n_1,\dots,n_d}^{Z_1,\dots,Z_{d-1}}
%:=\langle
%\vec\LLii_{n_1,\dots,n_d}^o(Z_1,\dots, Z_k) 
%\bigm| 0\leqslant k\leqslant d-1
%\rangle_{\F_q[t]}.
%$$
The definition of
$\M_{n_1,\dots,n_d}^{Z_1,\dots,Z_{d-1}}$
is independent of any choice of branches.
 
Again we note that the vector $\vec\LLii_{n_1,\dots,n_d}^o(Z_1, \dots, Z_d)$ is congruent to its \lq non-$o$' version
modulo $\M_{n_1,\dots,n_d}^{Z_1,\dots,Z_{d-1}}$
%with its \lq non-$o$' version 
when all components converge. 

%\begin{lem}
%When $Z_1,\dots, Z_d\in\E$, any branch $\vec\LLii_{n_1,\dots,n_d}^o(Z_1,\dots,Z_{d})$ lies in $\Omega^{-n_1-\dots-n_d } \E^d$.
%\end{lem}
%
%\begin{proof}
%By Lemma \ref{lem:AS}, we have
%$\vec\LLii_{\{0\}^d}^o(Z_1,\dots,Z_{d})\in\E^d$,
%which implies the claim.
%\end{proof}
The following properties  will be used in our later sections.

\begin{prop}\label{lem:property of multiple LLii}
Put $n_1,\dots, n_d\geqslant 1$ and $Z_1,\dots,Z_d\in\TT$.
Let 
$
\vec\LLii_{n_1,\dots,n_d}^o(Z_1,\dots,Z_d)$
%$=
%(\Omega^{-n_1-\dots-n_{d-1}}\LLii_{n_d}^o(Z_d),
%\Omega^{-n_1-\dots-n_{d-2}}\LLii_{n_{d-1},n_d}^o(Z_{d-1},Z_d),
%\dots,
%\LLii_{n_1,\dots,n_d}^o (Z_1,\dots,Z_d)
%)^\T
%$ in $\TT^d$
be a branch as above.
%$\vec\LLii_{n_1,\dots,n_d}(Z_1,\dots,Z_d)$.
% and put
%$$
%\LL_{n_1,\dots,n_d}(Z_1,\dots,Z_d)=
%\Omega^{n_1+\cdots+n_d}\LLii_{n_1,\dots,n_d}(Z_1,\dots,Z_d),
%$$
Then we have

\rm{(1).}
%By \eqref{eq:recursive for LLii},
%its last component denoted by $\LLii_{n_1,\dots,n_d}^o(Z_1,\dots,Z_{d})$ 
A congruence with
the tuple %$\vec\LLii_{n_1,\dots,n_d}(Z_1,\dots,Z_{d})\in\TT^d$ given by
$$
(
\Omega^{-n_1-\dots-n_{d-1}}\LLii_{n_d}(Z_d),
\dots,
\Omega^{-n_1}\LLii_{n_2,\dots,n_d} (Z_2,\dots,Z_d),
\LLii_{n_1,\dots,n_d} (Z_1,\dots,Z_d)
\bigr)^\T
%\Omega^{-n_1-\dots-n_{d-1}}\LLii_{n_d}(Z_d),
%\Omega^{-n_1-\dots-n_{d-2}}\LLii_{n_{d-1},n_d}(Z_{d-1},Z_d),
%\dots,
%\LLii_{n_1,\dots,n_d} (Z_1,\dots,Z_d)
%)^\T
\in \TT^d
$$
of \eqref{eq:LLii} modulo $\M_{n_1,\dots,n_d}^{Z_1,\dots,Z_{d-1}}$
when \eqref{eq:condition for Z} holds.

\rm{(2).} $\Omega^{n_1+\cdots+n_d}\vec\LLii_{n_1,\dots,n_d}^o(Z_1,\dots,Z_k)\in\TT(\infty)^d$ (resp. $\E^d$)
for $k=1,\dots,d$
when $\Omega^{n_1}Z_1,\dots,\Omega^{n_d}Z_d\in\TT(\infty)$ (resp. $\E$).

\rm{(3).} 
%\begin{align}\label{eq:sys of LLii}
$
\wp\left(\Omega^{n_1+\cdots+n_d} \LLii_{n_1,\dots,n_d}^o(Z_1,\dots,Z_d)\right)
%-\left(\Omega^{n_1+\cdots+n_d}\LLii_{n_1,\dots,n_d}^o(Z_1,\dots,Z_d)\right)^{(1)}  
=
%(t-\theta^{(1)})^{-n_2-\dots-n_d}
\Omega^{n_1}Z_1
\left(\Omega^{n_2+\cdots+n_d}\LLii_{n_2,\dots,n_d}^o(Z_2,\dots,Z_d)\right)^{(1)}.
$
%\end{align}

\rm{(4).} $\left(\Omega^{n_1+\cdots+n_d}\LLii_{n_1,\dots,n_d}^o(Z_1,\dots,Z_d)\right)(\theta^{q^k})
=\left(\Omega^{n_1+\cdots+n_d}\LLii_{n_1,\dots,n_d}^o(Z_1,\dots,Z_d)\right)(\theta)^{q^k}$
for $k\geqslant 1$
when $\Omega^{n_1}Z_1,\dots,\Omega^{n_d}Z_d\in\E$.
\end{prop}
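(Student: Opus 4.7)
The plan is to derive all four assertions from the identity $\Omega^{n_1+\cdots+n_d}\vec\LLii_{n_1,\dots,n_d}^o(Z_1,\dots,Z_d)=\vec\LLii_{\{0\}^d}^o(\Omega^{n_1}Z_1,\dots,\Omega^{n_d}Z_d)$ built into Definition \ref{def:multiple LLii}, combined with the algebraic-step system \eqref{eq:sys diff eq} and Lemma \ref{lem:AS}. The natural order is to prove (3) first, then deduce (2) and (4) from it, and read off (1) from the general solution description \eqref{eq:solution linear combination}.

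For (3), I would apply $\wp$ to the last component of the above identity and invoke \eqref{eq:sys diff eq} at $i=d$ with the substitutions $Z_j\mapsto\Omega^{n_j}Z_j$; rewriting the right-hand side back in terms of $t$-motivic multiple polylogarithms via the same identity for the tail $(Z_2,\dots,Z_d)$ produces the stated Artin--Schreier-type equation. For (1), when \eqref{eq:condition for Z} holds, the convergent series $\LLii_{n_1,\dots,n_d}(Z_1,\dots,Z_d)$ together with its truncations is a genuine solution of the difference system defining any branch, so by \eqref{eq:solution linear combination} the two tuples differ by an element of $\M_{n_1,\dots,n_d}^{Z_1,\dots,Z_{d-1}}$, which is exactly the assertion.

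For (2), I would induct on $d$. The base case $d=1$ is Lemma \ref{lem:property of LLii}(1). For the inductive step, (3) expresses $\Omega^{n_1+\cdots+n_d}\LLii_{n_1,\dots,n_d}^o(Z_1,\dots,Z_d)$ as a $\wp$-preimage of $\Omega^{n_1}Z_1\cdot(\Omega^{n_2+\cdots+n_d}\LLii_{n_2,\dots,n_d}^o(Z_2,\dots,Z_d))^{(1)}$; under the entireness hypothesis this product lies in $\E$ by the inductive hypothesis, and Lemma \ref{lem:AS}(4) forces the preimage to lie in $\E$ as well. The $\TT(\infty)$ variant is identical, using Lemma \ref{lem:AS}(2) in place of (4). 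For (4), writing $F_1$ and $F_2$ for the two polylogarithmic expressions appearing in (3), the equation reads $F_1-F_1^{(1)}=\Omega^{n_1}Z_1\cdot F_2^{(1)}$; evaluating at $t=\theta^{q^{k+1}}$ with $k\geqslant 0$ kills the right-hand side because $\Omega(\theta^{q^{k+1}})=0$, leaving $F_1(\theta^{q^{k+1}})=F_1^{(1)}(\theta^{q^{k+1}})=F_1(\theta^{q^k})^q$ via the elementary identity $F^{(1)}(t^q)=F(t)^q$, and iteration from $k=0$ yields the formula.

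The main obstacle, such as it is, is purely bookkeeping: one must ensure that when (3) is used inductively the ambient branches match up consistently, and that the module $\M_{n_1,\dots,n_d}^{Z_1,\dots,Z_{d-1}}$ correctly absorbs the resulting $\F_q[t]$-ambiguity at each Artin--Schreier step. The genuinely analytic input is concentrated entirely in Lemma \ref{lem:AS}, and the rest is formal manipulation of the difference system and the normalising factors $\Omega^{n_1+\cdots+n_d}$.
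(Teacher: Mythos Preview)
Your proposal is correct and follows essentially the same approach as the paper: both reduce everything to the algebraic-step system \eqref{eq:sys diff eq} via the defining identity $\Omega^{n_1+\cdots+n_d}\LLii_{n_1,\dots,n_d}^o=\LLii_{\{0\}^d}^o(\Omega^{n_1}Z_1,\dots,\Omega^{n_d}Z_d)$, then appeal to Lemma \ref{lem:AS} for (2) and evaluate the difference equation of (3) at $t=\theta^{q^{h+1}}$ (using $\Omega(\theta^{q^{h+1}})=0$ and $F^{(1)}(t^q)=F(t)^q$) for (4). Your choice to establish (3) first and feed it into (2) and (4) is exactly how the paper's argument is organized as well, and your route to (1) via \eqref{eq:solution linear combination} is equivalent to the paper's one-line appeal to \eqref{eq:recursive for LLii}.
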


\begin{proof}
The proof  can be done in the same way to that of 
Lemma \ref{lem:property of LLii}.

(1).
It can be deduced from \eqref{eq:recursive for LLii}.

(2). %It is proved inductively by Lemma \ref{lem:AS}, Lemma \ref{lem:property of LLii},
%\eqref{eq: wp for multiple LLii} and \eqref{eq:den multiple LLii}
%because we have $\Omega\in\E$.
By Lemma \ref{lem:AS}, we have
$\vec\LLii_{\{0\}^d}^o(Z_1,\dots,Z_{d})\in\TT(\infty)^d$ (resp. $\E^d$)
for $Z_1,\dots,Z_{d}\in\TT(\infty)$ (resp. $\E$),
which implies the claim.

(3).
Put
$$
\LLii_{\{0\}^d}^o(\Omega^{n_1}Z_1,\dots, \Omega^{n_d}Z_d)
=\Omega^{n_1+\dots+n_d}\cdot\LLii_{n_1,\dots,n_d}^o(Z_1,\dots,Z_d).
$$
By \eqref{eq:sys diff eq}, we have
%\eqref{eq:func eq for Li00}, we have
\begin{align*}
\LLii_{\{0\}^d}^o(\Omega^{n_1}Z_1,\dots, \Omega^{n_d}Z_d)-&
\LLii_{\{0\}^d}^o(\Omega^{n_1}Z_1,\dots, \Omega^{n_d}Z_d)^{(1)} \\
&=\Omega^{n_1}Z_1\LLii_{\{0\}^{d-1}}^o(\Omega^{n_2}Z_2,\dots, \Omega^{n_d}Z_d)^{(1)}
\end{align*}
which proves the claim.

(4).  By Lemma \ref{lem:AS} and \eqref{eq:sys diff eq}, we inductively obtain
$
\LLii_{\{0\}^d}^o(\Omega^{n_1}Z_1,\dots, \Omega^{n_d}Z_d)\in\E
$.
Evaluation of $t=\theta^{q^{h+1}}$ to  the above equation yields
$$
\LLii_{\{0\}^d}^o(\Omega^{n_1}Z_1,\dots, \Omega^{n_d}Z_d)(\theta^{q^{h+1}})-
\LLii_{\{0\}^d}^o(\Omega^{n_1}Z_1,\dots, \Omega^{n_d}Z_d)(\theta^{q^h})^q
=0
$$
by the same reason to the proof of Lemma \ref{lem:property of LLii}.
\end{proof}

We remark that (2) and (4) are shown in \cite[Lemma 5.3.1 and 5.3.5]{C14}
under the  convergence condition for $(Z_1,\dots,Z_d)\in (\bar K^\times)^d\cap \mathbb D$
and 
in  \cite[Proposition 2.3.3]{CPY} under the condition
$(Z_1,\dots,Z_d)\in (\bar K[t])^d\cap \mathbb D$.
%both of which are not assumed in our lemma.

%\medskip
\subsubsection{Evaluation step}
%The final step is achieved 
By the evaluation of $t=\theta$,
we carry out 
the continuation of the Carlitz multiple polylogarithm.

%Lastly, 
\begin{defn}
Let $n_1,\dots,n_d\in\N$
and $z_1,\dots, z_{d-1}\in\C_\infty$.

(1). 
%For fixed $z_1,\dots, z_{d-1}\in\C_\infty$,
The {\it monodromy module}
$$M_{n_1,\dots,n_d}^{z_1,\dots,z_{d-1}}$$
is defined to be the $\F_q[t]$-submodule of $\C_\infty^d$
given by the evaluation of $t=\theta$ to
$\M_{n_1,\dots,n_d}^{z_1,\dots,z_{d-1}}$.
%\end{defn}
%It is also described as 
%the $A$-submodule of $\C_\infty^d$
%generated by $d$ elements
%$$
%\tilde\pi^{n_{k+1}+\cdots+n_d}
%\vec\Li_{n_1,\dots,n_k}^o(z_1,\dots, z_k)
%:=
%\tilde\pi^{n_{k+1}+\cdots+n_d}\cdot
%\vec\LLii_{n_1,\dots,n_k}^o(z_1,\dots,z_k)|_{t=\theta}
%\in \C_\infty^d
%$$
%($0\leqslant k\leqslant d-1$).
%Then the continuation of the Carlitz multiple polylogarithm 
%is given as follows:
%\begin{defn}

(2).
%Let $n_1,\dots,n_d\in\N$.
%For fixed $z_1,\dots, z_{d-1}\in\C_\infty$, 
We define
the $\F_q$-linear map
$$
\vec\Li_{n_1,\dots,n_d}(z_1,\dots,z_{d-1},-):\C_\infty\to \C_\infty^d/M_{n_1,\dots,n_d}^{z_1,\dots,z_{d-1}}$$ 
by a restriction of
$\vec\LLii_{n_1,\dots,n_d}(Z_1,\dots,Z_{d-1},-)$
to  $Z_i=z_i\in\C_\infty\subset \TT$ and a substitution of $t=\theta$ there 
(we note again that $t=\theta$ is inside its region of convergence
%any branch of $\Li_{n_1,\dots,n_d}(z_1,\dots,z_{d})$
by $\Omega(\theta)\neq 0$,
the entireness of  $\Omega$ and the above proposition).
%Here $M_{n_1,\dots,n_d}^{z_1,\dots,z_{d-1}}$
%is given by the evaluation of
%$\M_{n_1,\dots,n_d}^{z_1,\dots,z_{d-1}}$
%at $t=\theta$, which is also described as 
%the $A$-submodule of $\C_\infty^d$
%generated by $d$ elements
%$$
%\tilde\pi^{-n_{k+1}-\cdots-n_d}
%\vec\Li_{n_1,\dots,n_k}^o(z_1,\dots, z_k)
%:=
%\tilde\pi^{-n_1-\cdots-n_d}\cdot
%\vec\LLii_{n_1,\dots,n_k}^o(z_1,\dots,z_k)|_{t=\theta}
%\in \C_\infty^d
%$$
%($0\leqslant k\leqslant d-1$).
%$$
%\tilde\pi^{n_1+\cdots+n_d},\
%\tilde\pi^{n_2+\cdots+n_d}\Li_{n_1}^o(z_1), \
%%\Omega^{-n_3-\cdots-n_d}\LLii_{n_1,n_2}^o(Z_1,Z_2),
%\dots,
%\tilde\pi^{n_d}\Li_{n_1,\dots, n_{d-1}}^o(z_1,\dots,z_{d-1})
%$$
%where
%under an appropriate choice of branches
%$\Li_{n_i,\dots, n_{d-1}}^o(z_i,\dots,z_{d-1})\in\C_\infty$
%is a branch associated with
%\eqref{eq:LLii d generators}.
%Again it is independent of a choice of branch.

(3).
A {\it branch} $\vec\Li_{n_1,\dots,n_d}^o(z_1,\dots,z_{d-1},-)$
means an $\F_q$-linear lift of
$\vec\Li_{n_1,\dots,n_d}(z_1,\dots,z_{d-1},-)$.
%which is locally rigid analytic.
For each $z_d\in\C_\infty$,
%we call a lift of $\vec\Li_{n_1,\dots,n_d}(z_1,\dots,z_{d})$ %to $\TT^d$
%{\it a branch} and 
we denote 
\begin{align*}
\vec\Li_{n_1,\dots,n_d}^o(z_1,\dots,z_{d})= &
(
\tilde\pi^{n_{1}+\cdots+n_{d-1}}\Li_{n_d}^o(z_d),
\tilde\pi^{n_{1}+\cdots+n_{d-2}}\Li_{n_{d-1},n_d}^o(z_{d-1},z_d),
\dots, \\
&\qquad
\tilde\pi^{n_{1}}\Li_{n_2,\dots,n_d}^o(z_2,\dots, z_d), 
\Li_{n_1,\dots,n_d}^o(z_1,\dots, z_d)
)^\T
\in \C_\infty^d.
\end{align*}
\end{defn}

The definition of the monodromy module $M_{n_1,\dots,n_d}^{z_1,\dots,z_{d-1}}$ is
independent of any choice of branches.
It is 
%It turns out that 
%the substitution of $t=\theta$ in 
%$\M_{n_1,\dots,n_d}^{z_1,\dots,z_{d-1}}$,
%that is, 
the $A$-submodule of $\C_\infty^d$
generated by $d$ elements, in precise,
$$
M_{n_1,\dots,n_d}^{z_1,\dots,z_{d-1}}
=\langle
\vec\Li_{n_1,\dots,n_d}^o(z_1,\dots, z_k) 
\bigm| 0\leqslant k\leqslant d-1
\rangle_{A},
$$
with
\begin{align*}
&\vec\Li_{n_1,\dots,n_d}^o(z_1, \dots, z_k)
:= 
%\tilde\pi^{n_{k+1}+\cdots+n_d}\cdot
\vec\LLii_{n_1,\dots,n_d}^o(z_1,\dots,z_k)|_{t=\theta}
%\Omega^{-n_1-\cdots-n_d}\cdot 
%\vec\LLii_{\{0\}^{d}}^o(\Omega^{n_1}Z_1,\dots, \Omega^{n_k}Z_k)
  \\
&\quad=\tilde\pi^{n_{k+1}+\cdots+n_d}\cdot
\Bigl(
\{0\}^{d-k-1},
\tilde\pi^{n_1+\dots+n_{k}},
\tilde\pi^{n_1+\dots+n_{k-1}}\Li_{n_k}^o(z_k), 
\dots, \\
&
\qquad\qquad\qquad\qquad
\tilde\pi^{n_{1}}\Li_{n_2,\dots,n_d}^o(z_2,\dots, z_d), 
\Li_{n_1,\dots,n_k}^o (z_1,\dots,z_k)
\Bigr)^\T\in\C_\infty^d
\end{align*}
with $0\leqslant k\leqslant d-1$.
In other word, it is the $A$-submodule of $\C_\infty^d$
generated by $d$ columns of the following matrix:
$$
{\tiny
%\left(
\begin{pmatrix}
%    \begin{array}{ccccc}
%       \tilde\pi^{n_1+\dots+n_d} &    0   & 0 & \ldots & 0\\
      \tilde\pi^{n_1+\dots+n_{d}} & 0 & 0 & \ldots & 0 \\
      \tilde\pi^{n_1+\dots+n_{d-2}+n_d}\Li_{n_{d-1}}^o(z_{d-1})   & \tilde\pi^{n_1+\dots+n_{d}} & 0 & \dots & \vdots \\
      \vdots & \vdots & \ddots &0 &  \vdots\\
      \vdots   & \vdots & & \tilde\pi^{n_1+\cdots+n_d} & 0 \\
      \tilde\pi^{n_d}\Li_{n_1,\dots,n_{d-1}}^o(z_1,\dots,z_{d-1})  &  \tilde\pi^{n_{d-1}+n_d}\Li_{n_1,\dots,n_{d-2}}^o(z_1,\dots,z_{d-2}) & \dots &  \tilde\pi^{n_2+\cdots+n_d}\Li_{n_1}^o(z_1)  &  \tilde\pi^{n_1+\cdots+n_d}
%    \end{array}
\end{pmatrix}.
%  \right).
}
$$ 
%
%$\tilde\pi^{n_{k+1}+\cdots+n_d}\cdot
%\vec\LLii_{n_1,\dots,n_k}^o(z_1,\dots,z_k)|_{t=\theta}
%\in \C_\infty^d$
%%$\tilde\pi^{n_1+\cdots+n_d}\cdot\vec\Li_{\{0\}^{d}}^o(\Omega^{n_1}Z	_1,\dots, \Omega^{n_k}Z_k)$
%denoted by 
%\begin{align*}
%%\tilde\pi^{n_{k+1}+\cdots+n_d}&\cdot
%\vec\Li_{n_1,\dots,n_d}^o(z_1,\dots, z_k) & 
%=\tilde\pi^{n_{k+1}+\cdots+n_d}\cdot
%\Bigl(
%\{0\}^{d-k-1},
%\tilde\pi^{n_1+\dots+n_{k}},
%\tilde\pi^{n_1+\dots+n_{k-1}}\Li_{n_k}^o(z_k), 
%\dots, \\
%&
%\tilde\pi^{n_{1}}\Li_{n_2,\dots,n_d}^o(z_2,\dots, z_d), 
%\Li_{n_1,\dots,n_k}^o (z_1,\dots,z_k)
%\Bigr)^\T
%\end{align*}
%with $0\leqslant k\leqslant d-1$.

%The following proposition ensures that $\vec\Li_n$ is an analytic continuation of $\Li_n$.

\begin{thm}\label{prop:ana con CMPL}
{\rm (1).} $\vec\Li_{n_1,\dots,n_d}(z_1,\dots,z_{d})$
is congruent to the tuple
$$
(
\tilde\pi^{n_{1}+\cdots+n_{d-1}}\Li_{n_d}(z_d),
%\tilde\pi^{n_{1}+\cdots+n_{d-2}}\Li_{n_{d-1},n_d}(z_{d-1},z_d),
\dots,
\tilde\pi^{n_{1}}\Li_{n_2,\dots,n_d}(z_2,\dots, z_d), 
\Li_{n_1,\dots,n_d}(z_1,\dots, z_d)
)^\T
\in \C_\infty^d
$$
of \eqref{eq:CMPL}
%\equiv
%\Li_{n_1,\dots,n_d}(z_1,\dots,z_{d}) \bmod 
modulo $M_{n_1,\dots,n_d}^{z_1,\dots,z_{d-1}}$
when $(z_1,\dots,z_d)$ lies in $\mathbb D$.

{\rm (2).}
$\vec\Li_{n_1,\dots,n_d}(z_1,\dots,z_{d-1},-)$ 
locally admits an analytic  lift
%is locally analytic
(as a function on $z_d$)
in the sense of Proposition \ref{prop:ana con CPL}.
%\vec\Li_n(z)$ is locally rigid-analytic, that is,
%for each $z\in\C_\infty$ there exist a closed disk $U\subset \C_\infty$ containing $z$ and a closed disk $V\subset \C_\infty$ containing $\vec\Li_n(U)$ and
%injected to $\C_\infty/\tilde\pi^nA$,
%such that $\vec\Li_n|_U:U\to V$ is described by a rigid analytic function.
\end{thm}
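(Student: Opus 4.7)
The plan is to derive both statements from the $t$-motivic analogue established in Proposition \ref{lem:property of multiple LLii}, combined with evaluation at $t=\theta$, in close parallel to the argument of Proposition \ref{prop:ana con CPL}.

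For part (1), the key inputs are the relation $\tilde\pi=1/\Omega(\theta)$, the entireness of $\Omega$, and Proposition \ref{lem:property of multiple LLii}(1). Plugging constant $Z_i=z_i\in\C_\infty\subset\TT$ into the $t$-motivic congruence of Proposition \ref{lem:property of multiple LLii}(1) and observing that for constants the convergence condition \eqref{eq:condition for Z} reduces to $(z_1,\dots,z_d)\in\mathbb D$, one obtains a congruence in $\TT^d$ modulo $\M_{n_1,\dots,n_d}^{z_1,\dots,z_{d-1}}$. Evaluating at $t=\theta$ then transports this to a congruence in $\C_\infty^d$ modulo $M_{n_1,\dots,n_d}^{z_1,\dots,z_{d-1}}$ (the latter is the definition of the monodromy module), with each occurrence of $\Omega^{-k}$ specialising to $\tilde\pi^{k}$ and yielding the explicit $\tilde\pi$-powers in the displayed tuple.

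For part (2), I would adapt the argument of Proposition \ref{prop:ana con CPL}(2). Because $\vec\LLii_{n_1,\dots,n_d}(Z_1,\dots,Z_{d-1},-)$ is $\F_q[t]$-linear, the specialised map $\vec\Li_{n_1,\dots,n_d}(z_1,\dots,z_{d-1},-)$ is additive, so for any fixed $z_d\in\C_\infty$ and any $w\in\C_\infty$,
$$
\vec\Li_{n_1,\dots,n_d}(z_1,\dots,z_{d-1},z_d+w)\equiv \vec\Li_{n_1,\dots,n_d}(z_1,\dots,z_{d-1},z_d)+\vec\Li_{n_1,\dots,n_d}(z_1,\dots,z_{d-1},w)
$$
modulo $M_{n_1,\dots,n_d}^{z_1,\dots,z_{d-1}}$; crucially, this module is independent of $z_d$. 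For $w$ in a sufficiently small closed disk centered at $0$, the point $(z_1,\dots,z_{d-1},w)$ lies in $\mathbb D$, so by part (1) the second summand is congruent to the explicit vector of CMPL series from \eqref{eq:CMPL}, each of which is a rigid analytic power series in $w$ on such a disk. Adding the constant first summand delivers a rigid-analytic parametrisation on a closed disk $U$ around $z_d$.

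The remaining point, which I consider the only substantive obstacle, is to verify that the image $\vec\Li_{n_1,\dots,n_d}(z_1,\dots,z_{d-1},U)$ fits inside a closed polydisk $V\subset\C_\infty^d$ that injects into $\C_\infty^d/M_{n_1,\dots,n_d}^{z_1,\dots,z_{d-1}}$. This reduces to the discreteness of the monodromy module in $\C_\infty^d$. Using the explicit description as an $A$-submodule generated by $d$ columns of the triangular matrix displayed above, whose diagonal entries equal $\tilde\pi^{n_1+\cdots+n_d}\neq 0$, the module is a discrete $A$-lattice; shrinking $U$ if necessary then produces the required $V$. Combining this discreteness with the additive formula above completes the transport of rigid-analyticity from a neighbourhood of $0$ to a neighbourhood of any $z_d$.
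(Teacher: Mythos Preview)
Your argument is correct and follows essentially the same route as the paper: part~(1) is obtained by specialising Proposition~\ref{lem:property of multiple LLii}(1) at $t=\theta$, and part~(2) is the additivity-plus-discreteness argument of Proposition~\ref{prop:ana con CPL}(2), with discreteness of $M_{n_1,\dots,n_d}^{z_1,\dots,z_{d-1}}$ coming from the lower-triangular generator matrix with nonzero diagonal $\tilde\pi^{n_1+\cdots+n_d}$. The paper's own proof is a one-line reference to Proposition~\ref{prop:ana con CPL} together with exactly this observation about the triangular matrix.
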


%$\vec\Li_{n_1,\dots,n_d}(z_1,\dots,z_{d})$ 
%\footnote{
%By abuse of notation, we often denote
%$\Li_{n_1,\dots,n_d}(z_1,\dots,z_{d})$ to be its lift.
%}
%is congruent to \eqref{eq:CMPL} modulo $M_{n_1,\dots,n_d}^{z_1,\dots,z_{d-1}}$
%when $(z_1,\dots,z_d)$ lies on $\mathbb D$.

%{\rm (2).}
%$\vec\Li_{n_1,\dots,n_d}$ is locally rigid-analytic.
%, that is, 
%for each ${\frak z}=(z_1,\dots,z_{d})$ there is a neighborhood  $U$ containing it and
%$V$ which is isomorphic to an unit disk and contains
%its image such that the restriction
%such that $\vec\Li_{n_1,\dots,n_d}:U\to V$ is described
%by rigid analytic functions.
%\end{lem}

\begin{proof}
The proof can be done in the same way to that of Proposition \ref{prop:ana con CPL}.
We note that 
$M_{n_1,\dots,n_d}^{z_1,\dots,z_{d-1}}$ is discrete in $\C_\infty^d$
because %the vectors $\vec\Li_{n_1,\dots,n_d}^o(z_1,\dots, z_k)$
%with $0\leqslant k \leqslant d-1$ 
the above matrix
forms a lower triangular matrix with invertible diagonals. 
\end{proof}

\begin{rem}\label{rem:other branch of CMPL}
Let 
$\vec\Li^o_{n_1,\dots,n_d}(z_1,\dots,z_{d})$ and
$\vec\Li^{o'}_{n_1,\dots,n_d}(z_1,\dots,z_{d})$ 
be any branches and denote their last coordinates by
$\Li^o_{n_1,\dots,n_d}(z_1,\dots,z_{d})$ and
$\Li^{o'}_{n_1,\dots,n_d}(z_1,\dots,z_{d})$
respectively. 
By definition, the difference  between them
%of any two branches, 
%$\Li^o_{n_1,\dots,n_d}(z_1,\dots,z_{d})$ and
%$\Li^{o'}_{n_1,\dots,n_d}(z_1,\dots,z_{d})$
%of the Carlitz multiple polylogarithm
is given by  an integral combination of the last row of the above matrix:
$$
\Li^o_{n_1,\dots,n_d}(z_1,\dots,z_{d})-
\Li^{o'}_{n_1,\dots,n_d}(z_1,\dots,z_{d})=
\sum_{i=0}^{d-1}
\alpha_i\cdot\tilde\pi^{n_{i+1}+\cdots+n_d}\Li_{n_1,\dots,n_i}^o(z_1,\dots,z_i)
$$
with $\alpha_i\in A$.
\end{rem}

%\Add{Analytic continuation as $n$-variable functions will be explained in Proposition \ref{???}.}

%%%%%%%%%%%%%%%%%%%%%
\subsection{Continuation of Carlitz multiple star polylogarithms}\label{sec:prolong MSPL}
By exploiting the method of continuation of Carlitz polylogarithm
in \S \ref{sec:prolong PL}
and imitating the arguments in  \S \ref{sec:prolong MPL},
we extend the Carlitz multiple star polylogarithm to $\C_\infty$
with a treatment of branches, that is, a monodromy module.

\subsubsection{Algebraic step}
%Firstly, we 
%denote $\{0\}^d$ to be the multi-index where $0$ is repeated $d$-times
We consider the series
for $Z_1,\dots, Z_d\in \TT$:
$$
\LLii_{\{0\}^d}^\star(Z_1,\dots,Z_d)=\sum_{0\leqslant i_1\leqslant \cdots\leqslant i_d}
{Z_1^{(i_1)}}\cdots{Z_d^{(i_d)}}.
%\quad \in \C_\infty[[t]].
$$
%which is  \lq an $(n_1,\dots,n_d)=\{0\}^d$ version' of \eqref{eq:LLii star}.
%It converges when %$||Z_i||_\infty<1$ for all $i$.
%\eqref{eq:condition for Z} holds for $(n_1,\dots,n_d)=\{0\}^{d}$. 
We observe the  following system of difference equations
\begin{equation}\label{eq:sys diff eq star}
\wp(\LLii_{\{0\}^i}^\star(Z_{d-i+1},\dots,Z_d))=
Z_{d-i+1}\cdot\LLii_{\{0\}^{i-1}}^\star(Z_{d-i+2},\dots,Z_d)
\end{equation}
which they satisfy for $1\leqslant i\leqslant d$.

We note that, by Lemma \ref{lem:AS}.(1), for any $Z_1,\dots, Z_d\in \TT$,
there always exists a solution of the above system 
\eqref{eq:sys diff eq star} in $\TT^d$,
denoted by
\footnote{
For our convenience in the next section,
we reverse here the order of  coordinate %here is opposite 
to that of 
$\vec\LLii_{\{0\}^{d}}^{o}(Z_1,\dots, Z_d)$ in the previous subsection.
}
\begin{align}\label{eqLio0stardZ}
\vec\LLii_{\{0\}^{d}}^{\star,o}&(Z_1,\dots, Z_d):= \\ \notag
(
&\LLii_{\{0\}^{d}}^{\star,o}(Z_1,\dots, Z_d),
\LLii_{\{0\}^{d-1}}^{\star,o}(Z_2,\dots, Z_d),\dots,
% \\&
%\LLii_{\{0\}^{2}}^{\star,o}(Z_{d-1},Z_d),
\LLii_{\{0\}^{1}}^{\star,o}(Z_d)
)^\T,
\end{align}
and all the  solutions of the above system \eqref{eq:sys diff eq star}
are described as linear combinations
$$
\vec\LLii_{\{0\}^{d}}^{\star,o}(Z_1,\dots, Z_d)+\sum_{k=0}^{d-1}
\alpha_k\cdot\vec\LLii_{\{0\}^{d}}^{\star,o}(Z_1,\dots, Z_k)
$$
with $\alpha_k\in\F_q[t]$ and
$$
\vec\LLii_{\{0\}^{d}}^{\star,o}(Z_1,\dots, Z_k)
:=
(
\LLii_{\{0\}^{k}}^{\star,o}(Z_1,\dots, Z_k),
\LLii_{\{0\}^{k-1}}^{\star,o}(Z_2,\dots, Z_k), 
\dots,
\LLii_{\{0\}^{1}}^{\star,o}(Z_k),
1,
\{0\}^{d-k-1}
)^\T
$$ 
in $\TT^d$
whose first $k$ components are
solutions of \eqref{eq:sys diff eq star} with $d=k$.
When $k=0$, it means $(0,\dots,0,1)^\T$.

Put 
$\M_{\{0\}^d}^{\star,Z_1,\dots,Z_{d-1}}$
to be the $\F_q[t]$-submodule of $\TT^d$ generated by the $d$ elements: %branches
$$
\M_{\{0\}^d}^{\star,Z_1,\dots,Z_{d-1}}:=\langle
\vec\LLii_{\{0\}^{d}}^{\star,o}(Z_1,\dots, Z_k) \bigm| 0\leqslant k\leqslant d-1
\rangle_{\F_q[t]},
$$
which is actually independent of any choice of branches.
%$\vec\LLii_{\{0\}^{d}}^{\star,o}(Z_1,\dots, Z_k)$ ($0\leqslant k\leqslant d-1$).
Then for a fixed $Z_1,\dots, Z_{d-1}\in\TT$,
we obtain a well-defined $\F_q[t]$-linear map
$$
\vec\LLii_{\{0\}^{d}}^{\star}(Z_1,\dots, Z_{d-1},-):\TT\to
\TT^d/\M_{\{0\}^d}^{\star,Z_1,\dots,Z_{d-1}}.
$$
A {\it branch} $\vec\LLii_{\{0\}^{d}}^{\star,o}(Z_1,\dots, Z_{d-1},-):\TT\to\TT^d$
means an $\F_q$-linear lift of $\vec\LLii_{\{0\}^{d}}^{\star}(Z_1,\dots, Z_{d-1},-)$.

We note that the vector \eqref{eqLio0stardZ} is congruent to its \lq non-$o$' version
modulo $\M_{\{0\}^d}^{\star,Z_1,\dots,Z_{d-1}}$ 
%with its \lq non-$o$' version 
when all components converge.

\subsubsection{Analytic step}
For $n_1,\dots,n_d\geqslant 1$, $Z_1,\dots,Z_d\in\TT$,
we put 
$$
\M_{n_1,\dots,n_d}^{\star, Z_1,\dots,Z_{d-1}}:=
\Omega^{-n_1-\cdots-n_d}\M_{\{0\}^d}^{\star,\Omega^{n_1}Z_1,\dots,\Omega^{n_{d-1}}Z_{d-1}},
$$
which is the $\F_q[t]$-submodule of $\TT^d$.
Then the continuation of the $t$-motivic Carlitz star multiple polylogarithm 
is carried out  as follows:

\begin{defn}\label{def:multiple LLii star}
Let $n_1,\dots,n_d\in\N$.
For fixed $Z_1,\dots, Z_{d-1}\in\TT$, we define 
the $\F_q[t]$-linear map
$$
\vec\LLii_{n_1,\dots,n_d}^{\star}(Z_1,\dots,Z_{d-1},-): \TT\to\TT^d/\M_{n_1,\dots,n_d}^{\star,Z_1,\dots,Z_{d-1}}
$$
sending $Z_d\in\TT$ to
\begin{align*}\label{eq:den multiple LLii star}
\vec\LLii_{n_1,\dots,n_d}^{\star} (Z_1,\dots,Z_d)
:=\Omega^{-n_1-\cdots-n_d}
\vec\LLii_{\{0\}^d}^{\star}(\Omega^{n_1}Z_1,\dots,\Omega^{n_d}Z_d).
\end{align*}
A {\it branch}  
$\vec\LLii_{n_1,\dots,n_d}^{\star,o}(Z_1,\dots,Z_{d-1},-):\TT\to \TT^d$
means an $\F_q$-linear lift of
$\vec\LLii_{n_1,\dots,n_d}^{\star}(Z_1,\dots,Z_{d-1},-)$
and, for each $Z_d\in\TT$, we denote 
\begin{align*}
\vec\LLii_{n_1,\dots,n_d}^{\star,o}(Z_1,\dots,Z_{d})= 
&\bigl(
\LLii_{n_1,\dots,n_d}^{\star,o} (Z_1,\dots,Z_d),
\Omega^{-n_1}\LLii_{n_2,\dots,n_d}^{\star,o} (Z_2,\dots,Z_d),\\
&
\dots,
%\Omega^{-n_1-\dots-n_{d-2}}\LLii_{n_{d-1},n_d}^{\star,o}(Z_{d-1},Z_d),
\Omega^{-n_1-\dots-n_{d-1}}\LLii_{n_d}^{\star,o}(Z_d)
\bigr)^\T\in\TT^d.
\end{align*}
\end{defn}

It turns out that 
the module
$\M_{n_1,\dots,n_d}^{\star,Z_1,\dots,Z_{d-1}}$
is the $\F_q[t]$-submodule of $\TT^d$
generated by $d$ elements, in precise,
$$
\M_{n_1,\dots,n_d}^{\star,Z_1,\dots,Z_{d-1}}=\langle
\vec\LLii_{n_1,\dots,n_d}^{\star,o}(Z_1,\dots, Z_k) \bigm| 0\leqslant k\leqslant d-1
\rangle_{\F_q[t]}.
$$
with
\begin{align*}
\vec\LLii_{n_1,\dots,n_d}^{\star,o}&(Z_1,\dots, Z_k)
:=\Omega^{-n_1-\cdots-n_d}\cdot
\vec\LLii_{\{0\}^{d}}^{\star,o}(\Omega^{n_1}Z_1,\dots, \Omega^{n_k}Z_k) \\
 &
=\Omega^{-n_{k+1}-\cdots-n_d}\cdot
\Bigl(
\LLii_{n_1,\dots,n_k}^{\star,o} (Z_1,\dots,Z_k), %\\
%&\qquad
 \Omega^{-n_1}\LLii_{n_2,\dots,n_k}^{\star,o} (Z_2,\dots,Z_k), \\
&\qquad\qquad\quad
\dots, 
%\Omega^{-n_1-\dots-n_{k-2}}\LLii_{n_{k-1},n_k}^{\star,o}(Z_{k-1},Z_k),
\Omega^{-n_1-\dots-n_{k-1}}\LLii_{n_k}^{\star,o}(Z_k), 
\Omega^{-n_1-\dots-n_{k}},
\{0\}^{d-k-1}
\Bigr)^\T\in\TT^d
\end{align*}
with $0\leqslant k\leqslant d-1$.
Again the definition of $\M_{n_1,\dots,n_d}^{\star,Z_1,\dots,Z_{d-1}}$
is independent of any branches.
The following properties  will be used in our later sections.

\begin{prop}\label{lem:property of multiple LLii star}
Put $n_1,\dots, n_d\geqslant 1$ and $Z_1,\dots,Z_d\in\TT$.
Let 
$
\vec\LLii_{n_1,\dots,n_d}^{\star,o}(Z_1,\dots,Z_d)$
be a branch as above.
Then we have

\rm{(1).}
%By \eqref{eq:recursive for LLii star},
%It is congruent to
A congruence with
the tuple %$\vec\LLii_{n_1,\dots,n_d}^{\star}(Z_1,\dots,Z_{d})\in\TT^d$ 
given by
$$
(
\LLii_{n_1,\dots,n_d}^{\star}(Z_1,\dots,Z_d),
\Omega^{-n_1}\LLii_{n_2,\dots,n_d}^{\star} (Z_2,\dots,Z_d),
\dots,
\Omega^{-n_1-\dots-n_{d-1}}\LLii_{n_d}^{\star}(Z_d)
\bigr)^\T \in \TT^d
$$
of \eqref{eq:LLii star} modulo $\M_{n_1,\dots,n_d}^{\star,Z_1,\dots,Z_{d-1}}$
when it converges.

\rm{(2).} $\Omega^{n_1+\cdots+n_d}\vec\LLii_{n_1,\dots,n_d}^{\star,o}(Z_1,\dots,Z_k)\in\TT(\infty)^d$ (resp. $\E^d$) for $k=1,\dots,d$
when $\Omega^{n_1}Z_1,\dots,\Omega^{n_d}Z_d\in\TT(\infty)$ (resp. $\E$).

\rm{(3).} 
$
\wp\left(\Omega^{n_1+\cdots+n_d} \LLii_{n_1,\dots,n_d}^{\star,o}(Z_1,\dots,Z_d)\right)
=\Omega^{n_1}Z_1\cdot
%(t-\theta^{(1)})^{-n_2-\dots-n_d}Z_1
\Omega^{n_2+\cdots+n_d}\LLii_{n_2,\dots,n_d}^{\star,o}(Z_2,\dots,Z_d).
$

\rm{(4).} $\left(\Omega^{n_1+\cdots+n_d}\LLii_{n_1,\dots,n_d}^{\star,o}(Z_1,\dots,Z_d)\right)(\theta^{q^k})
=\left(\Omega^{n_1+\cdots+n_d}\LLii_{n_1,\dots,n_d}^{\star,o}(Z_1,\dots,Z_d)\right)(\theta)^{q^k}$
for $k\geqslant 1$
when $\Omega^{n_1}Z_1,\dots,\Omega^{n_d}Z_d\in\E$.
\end{prop}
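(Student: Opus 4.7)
The plan is to mirror the proof of Proposition \ref{lem:property of multiple LLii}, substituting the star recursion \eqref{eq:recursive for LLii star} for \eqref{eq:recursive for LLii} and the star system \eqref{eq:sys diff eq star} for \eqref{eq:sys diff eq}. Set
\begin{equation*}
F := \Omega^{n_1+\cdots+n_d}\LLii_{n_1,\dots,n_d}^{\star,o}(Z_1,\dots,Z_d), \quad G := \Omega^{n_2+\cdots+n_d}\LLii_{n_2,\dots,n_d}^{\star,o}(Z_2,\dots,Z_d).
\end{equation*}
By Definition \ref{def:multiple LLii star}, $F$ is a branch $\LLii_{\{0\}^d}^{\star,o}(\Omega^{n_1}Z_1,\dots,\Omega^{n_d}Z_d)$, so all four claims reduce to the depth-zero star situation.

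For (1), I would iterate \eqref{eq:recursive for LLii star} to identify $\Omega^{n_1+\cdots+n_d}\LLii_{n_1,\dots,n_d}^\star(Z_1,\dots,Z_d)$ with $\LLii_{\{0\}^d}^\star(\Omega^{n_1}Z_1,\dots,\Omega^{n_d}Z_d)$ on the region of convergence and compare coordinate-wise with the branch of Definition \ref{def:multiple LLii star}; the congruence modulo $\M_{n_1,\dots,n_d}^{\star,Z_1,\dots,Z_{d-1}}$ is built into the construction. For (2), I would induct on $d$ using the star difference equation \eqref{eq:sys diff eq star}: Lemma \ref{lem:AS}.(4) shows that $\wp^{-1}$ preserves $\E$, and Lemma \ref{lem:AS}.(2) shows it preserves the radius of convergence, so entireness (resp.\ $\TT(\infty)$-membership) of $\Omega^{n_j}Z_j$ propagates through the coordinates.

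For (3), apply $\wp$ to $F$ and invoke \eqref{eq:sys diff eq star} with $i = d$; this yields $\wp(F) = \Omega^{n_1}Z_1 \cdot G$ directly. Crucially there is no Frobenius twist $(1)$ on the right, which is the signature difference from Proposition \ref{lem:property of multiple LLii}.(3) and simply reflects that the star convention allows $i_1 = i_2$.

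For (4), combine (2) with $F - F^{(1)} = \Omega^{n_1}Z_1\cdot G$. Evaluating at $t = \theta^{q^{h+1}}$ for $h \geqslant 0$ and using $\Omega(\theta^{q^{h+1}}) = 0$ together with $F^{(1)}(\theta^{q^{h+1}}) = F(\theta^{q^h})^q$ yields $F(\theta^{q^{h+1}}) = F(\theta^{q^h})^q$, whence the claim follows by induction on $h$. The main conceptual point, already handled by the framework, is that the right-hand side genuinely vanishes at the evaluation points; the single factor $\Omega^{n_1}$ suffices for this, so the argument is if anything cleaner than in Proposition \ref{lem:property of multiple LLii}.(4).
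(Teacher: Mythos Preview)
Your proposal is correct and follows exactly the approach the paper takes: the paper's own proof is the single sentence ``The proof can be done in the same way to that of Proposition \ref{lem:property of multiple LLii},'' and you have carried out precisely that translation, replacing \eqref{eq:recursive for LLii} and \eqref{eq:sys diff eq} by their star counterparts \eqref{eq:recursive for LLii star} and \eqref{eq:sys diff eq star}. Your observation that the missing Frobenius twist in (3) makes the vanishing in (4) depend only on the factor $\Omega^{n_1}$ is a correct and useful remark, but it does not constitute a different route.
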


\begin{proof}
The proof  can be done in the same way to that of 
%Lemma \ref{lem:property of LLii}.
Proposition \ref{lem:property of multiple LLii}.
%
%(1). %It is proved inductively by Lemma \ref{lem:AS}, Lemma \ref{lem:property of LLii},
%%\eqref{eq: wp for multiple LLii} and \eqref{eq:den multiple LLii}
%%because we have $\Omega\in\E$.
%By Lemma \ref{lem:AS}, we have
%$\vec\LLii_{\{0\}^d}^{\star,o}(Z_1,\dots,Z_{d})\in\E^d$
%for $Z_1,\dots,Z_{d}\in\E$,
%which implies the claim.
%
%
%(2).
%Put
%$$
%\LLii_{\{0\}^d}^{\star,o}(\Omega^{n_1}Z_1,\dots, \Omega^{n_d}Z_d)
%=\Omega^{n_1+\dots+n_d}\cdot\LLii_{n_1,\dots,n_d}^{\star,o}(Z_1,\dots,Z_d).
%$$
%By \eqref{eq:sys diff eq}, we have
%%\eqref{eq:func eq for Li00}, we have
%\begin{align*}
%\LLii_{\{0\}^d}^{\star,o}(\Omega^{n_1}Z_1,\dots, \Omega^{n_d}Z_d)-&
%\LLii_{\{0\}^d}^{\star,o}(\Omega^{n_1}Z_1,\dots, \Omega^{n_d}Z_d)^{(1)} \\
%&=\Omega^{n_1}Z_1\LLii_{\{0\}^{d-1}}^{\star,o}(\Omega^{n_2}Z_2,\dots, \Omega^{n_d}Z_d)^{(1)}
%\end{align*}
%which proves the claim.
%
%(3).  By Lemma \ref{lem:AS},  inductively we obtain
%$
%\LLii_{\{0\}^d}^{\star,o}(\Omega^{n_1}Z_1,\dots, \Omega^{n_d}Z_d)\in\TT_q
%$.
%Evaluation of $t=\theta^q$ to  the above equation yields
%$$
%\LLii_{\{0\}^d}^{\star,o}(\Omega^{n_1}Z_1,\dots, \Omega^{n_d}Z_d)(\theta^q)-
%\LLii_{\{0\}^d}^{\star,o}(\Omega^{n_1}Z_1,\dots, \Omega^{n_d}Z_d)(\theta)^q
%=0
%$$
%by the same reason to the proof of Lemma \ref{lem:property of LLii}.
\end{proof}

\subsubsection{Evaluation step}
By the evaluation of $t=\theta$, we carry out the continuation of the
Carlitz star multiple polylogarithm.
%For fixed $z_1,\dots, z_{d-1}\in\C_\infty$,we define 
%$M_{n_1,\dots,n_d}^{\star, z_1,\dots,z_{d-1}}$
%by the evaluation of
%$\M_{n_1,\dots,n_d}^{\star, z_1,\dots,z_{d-1}}$
%at $t=\theta$.
%Then the continuation of the Carlitz multiple star polylogarithm 
%is given as follows:

\begin{defn}
Let $n_1,\dots,n_d\in\N$
and $z_1,\dots, z_{d-1}\in\C_\infty$.

(1). The {\it monodromy module} $M_{n_1,\dots,n_d}^{\star, z_1,\dots,z_{d-1}}$
is defined to be the $\F_q[t]$-submodule of $\C_\infty^d$
given by the evaluation of $t=\theta$ to $\M_{n_1,\dots,n_d}^{\star, z_1,\dots,z_{d-1}}$. 

(2). We define the $\F_q$-linear map
$$
\vec\Li_{n_1,\dots,n_d}^\star(z_1,\dots,z_{d-1},-):\C_\infty\to \C_\infty^d/M_{n_1,\dots,n_d}^{\star, z_1,\dots,z_{d-1}}$$ 
by a restriction of
$\vec\LLii_{n_1,\dots,n_d}^\star(Z_1,\dots,Z_{d-1},-)$
to $Z_i=z_i\in\C_\infty\subset \TT$ and a substitution of $t=\theta$ there
(we note again that $t=\theta$ is inside its region of convergence
by $\Omega(\theta)\neq 0$,
the entireness of  $\Omega$ and the above proposition).

(3). A {\it branch} 
$\vec\LLii_{n_1,\dots,n_d}^{\star,o}(Z_1,\dots,Z_{d-1},-):\C_\infty\to \C_\infty^d $
means an $\F_q$-linear lift of 
%For each $z_d\in\C_\infty$, we call a lift of 
$\vec\Li_{n_1,\dots,n_d}^\star(z_1,\dots,z_{d})$
and, for each $z_d\in\C_\infty$, we denote
 %to $\TT^d$
%{\it a branch} and denote it by
\begin{align*}
\vec\Li_{n_1,\dots,n_d}^{\star,o}(z_1,\dots,z_{d})= &
(
\Li_{n_1,\dots,n_d}^{\star,o}(z_1,\dots, z_d),
\tilde\pi^{n_{1}}\Li_{n_2,\dots,n_d}^{\star,o}(z_2,\dots, z_d),  \\
&\dots,
\tilde\pi^{n_{1}+\cdots+n_{d-2}}\Li_{n_{d-1},n_d}^{\star,o}(z_{d-1},z_d),
\tilde\pi^{n_{1}+\cdots+n_{d-1}}\Li_{n_d}^{\star,o}(z_d)
)^\T
\in \C_\infty^d.
\end{align*}
\end{defn}

It turns out that 
the monodromy module $M_{n_1,\dots,n_d}^{\star, z_1,\dots,z_{d-1}}$
is the $A$-submodule of $\C_\infty^d$
generated by $d$ elements
%$\tilde\pi^{n_{k+1}+\cdots+n_d}\cdot
%\vec\LLii_{n_1,\dots,n_k}^{\star,o}(z_1,\dots,z_k)|_{t=\theta}
%\in \C_\infty^d$
%%$\tilde\pi^{n_1+\cdots+n_d}\cdot\vec\Li_{\{0\}^{d}}^o(\Omega^{n_1}Z	_1,\dots, \Omega^{n_k}Z_k)$
%denoted by 
\begin{align*}
\vec\Li_{n_1,\dots,n_d}^{\star,o}&(z_1,\dots, z_k) 
:=\tilde\pi^{n_{k+1}+\cdots+n_d}\cdot
\vec\LLii_{n_1,\dots,n_k}^{\star,o}(z_1,\dots,z_k)|_{t=\theta} \\
&=\tilde\pi^{n_{k+1}+\cdots+n_d}\cdot
\Bigl(
\Li_{n_1,\dots,n_k}^{\star,o} (z_1,\dots,z_k), 
\tilde\pi^{n_{1}}\Li_{n_2,\dots,n_k}^{\star,o}(z_2,\dots, z_k), \\
&\qquad\qquad
\dots,
%\tilde\pi^{n_1+\dots+n_{k-2}}\Li_{n_{k-1},n_k}^{\star,o}(z_{k-1},z_k),
\tilde\pi^{n_1+\dots+n_{k-1}}\Li_{n_k}^{\star,o}(z_k), 
\tilde\pi^{n_1+\dots+n_{k}},
\{0\}^{d-k-1}
\Bigr)^\T\in \C_\infty^d
\end{align*}
with $0\leqslant k\leqslant d-1$.
Actually it is independent of any choice of branches.

%The following proposition ensures that $\vec\Li_n$ is an analytic continuation of $\Li_n$.

\begin{thm}\label{prop:ana con CMSPL}
{\rm (1).} $\vec\Li_{n_1,\dots,n_d}^\star(z_1,\dots,z_{d})$
is congruent to the tuple
$$
(
\Li_{n_1,\dots,n_d}^{\star}(z_1,\dots, z_d),
\tilde\pi^{n_{1}}\Li_{n_2,\dots,n_d}^{\star}(z_2,\dots, z_d), 
\dots,
%\tilde\pi^{n_{1}+\cdots+n_{d-2}}\Li_{n_{d-1},n_d}^{\star}(z_{d-1},z_d),
\tilde\pi^{n_{1}+\cdots+n_{d-1}}\Li_{n_d}^{\star}(z_d)
)^\T
\in \C_\infty^d
$$
of \eqref{eq:CMPL}
%\equiv
%\Li_{n_1,\dots,n_d}(z_1,\dots,z_{d}) \bmod 
modulo $M_{n_1,\dots,n_d}^{\star,z_1,\dots,z_{d-1}}$
when $(z_1,\dots,z_d)$ lies in $\mathbb D^\star$.

{\rm (2).}
$\vec\Li_{n_1,\dots,n_d}^{\star}(z_1,\dots,z_{d-1},-)$ 
locally admits an analytic  lift
%is locally analytic
(as a function on $z_d$)
in the sense of Proposition \ref{prop:ana con CPL}.
\end{thm}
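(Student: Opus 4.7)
My plan is to mirror the argument used for Proposition~\ref{prop:ana con CPL} and Theorem~\ref{prop:ana con CMPL}, adapting it to the star case, since the construction of $\vec\Li_{n_1,\dots,n_d}^{\star}$ has been set up in exact parallel to the non-star version.

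For part (1), the congruence holds essentially by construction. At the $t$-motivic level, Proposition~\ref{lem:property of multiple LLii star}(1) tells us that the tuple $\vec\LLii_{n_1,\dots,n_d}^{\star,o}(Z_1,\dots,Z_d)$ is congruent modulo $\M_{n_1,\dots,n_d}^{\star,Z_1,\dots,Z_{d-1}}$ to the tuple of convergent series \eqref{eq:LLii star} whenever the convergence condition (with $\leqslant$ in place of $<$) holds. When $(z_1,\dots,z_d)\in\mathbb D^\star$ with $Z_i=z_i\in\C_\infty\subset\TT$, the required convergence holds, so evaluation at $t=\theta$ (which lies inside the respective radii of convergence because $\Omega$ is entire and nonvanishing there, as noted in the definition) yields the claimed congruence modulo $M_{n_1,\dots,n_d}^{\star,z_1,\dots,z_{d-1}}$.

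For part (2), I would argue as in Proposition~\ref{prop:ana con CPL}(2): by construction $\vec\Li^\star_{n_1,\dots,n_d}(z_1,\dots,z_{d-1},-)$ is $\F_q$-linear in its last argument, so it suffices to establish local rigid-analyticity near $z_d=0$ and then translate by $\F_q$-linearity. On a sufficiently small closed disk $U$ centered at $0$, the point $(z_1,\dots,z_{d-1},z_d)$ will lie in $\mathbb D^\star$ for all $z_d\in U$, so by (1) the map $\vec\Li_{n_1,\dots,n_d}^\star(z_1,\dots,z_{d-1},-)$ on $U$ is represented, modulo the monodromy module, by the convergent series $\Li^\star_{n_1,\dots,n_d}(z_1,\dots,z_{d-1},z_d)$ (together with its lower-length companions), and this series is rigid-analytic on $U$ by its defining power series expansion.

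The one step that actually needs verification — and which constitutes the main obstacle — is the discreteness of $M_{n_1,\dots,n_d}^{\star,z_1,\dots,z_{d-1}}$ in $\C_\infty^d$, which is needed to pass from the rigid-analytic representative to a local rigid-analytic description of $\vec\Li^\star$ itself. For this I would examine the matrix whose columns are the $d$ generators $\vec\Li_{n_1,\dots,n_d}^{\star,o}(z_1,\dots,z_k)$ for $k=0,1,\dots,d-1$. Inspecting the explicit formulas in the definition of these generators, the $k$-th column has nonzero entries only in positions $1,\dots,k+1$, with the $(k{+}1)$-st entry equal to $\tilde\pi^{n_1+\cdots+n_d}$ and zeros below. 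Thus, after the natural reindexing, this matrix is upper triangular with nonzero diagonal entries $\tilde\pi^{n_1+\cdots+n_d}$ (the mirror image of the lower triangular structure encountered in Theorem~\ref{prop:ana con CMPL}). Since $\tilde\pi$ is transcendental over $K$, this forces $M_{n_1,\dots,n_d}^{\star,z_1,\dots,z_{d-1}}$ to be a discrete $A$-submodule of $\C_\infty^d$, which closes the argument for (2).
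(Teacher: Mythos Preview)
Your proposal is correct and follows essentially the same route as the paper, which simply says the proof is carried out as for Theorem~\ref{prop:ana con CMPL} (which in turn refers back to Proposition~\ref{prop:ana con CPL} together with the discreteness of the monodromy module coming from the triangular shape of the generator matrix). Your observation that in the star case the matrix is \emph{upper} triangular (rather than lower triangular as in the non-star case) is exactly right and reflects the reversed coordinate ordering noted in the paper; the only superfluous step is invoking the transcendence of $\tilde\pi$, since mere nonvanishing of the diagonal entries $\tilde\pi^{n_1+\cdots+n_d}$ already suffices for discreteness.
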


\begin{proof}
The proof can be done in the same way to that of 
%Proposition \ref{prop:ana con CPL}
Theorem \ref{prop:ana con CMPL}.
%because $M_{n_1,\dots,n_d}^{\star,z_1,\dots,z_{d-1}}$ is discrete in $\C_\infty^d$.
\end{proof}

%%%%%%%%%%%%%%%%%%%%%%%%%%%%%%%%%%%%%%%%%%%%%%%%%%%%%%%%%%%%%%%%%%%%%%
%\section{Prolongation of the logarithm associated with Anderson-Thakur dual $t$-motives}
\section{Applications}\label{sec:applications}
By exploiting the techniques of
the continuation of multiple polylogarithms developed in the previous section, 
we explain  how
the logarithms associated with 
the tensor power of Carlitz module %and
%the Anderson-Thakur dual $t$-motives 
are extended 
to the whole space in \S \ref{sec:LogCn}.
%then %orthogonality of 
%we explain 
%the functional relations of \cite{CM,GN} hold
We present the orthogonal property  of $t$-motivic CMPL and CMSPL
which extends the functional relations of Chang-Mishiba
in \S \ref{subsec:orthogonality}.
%and % and \S \ref{sec:LogAT} respectively.
We show that Eulerian property
is independent of any choice of branches
%holds once for all branches 
in \S \ref{sec:CPY revisited}.
% if a branch of Carlitz MPL holds Eulerian property then all the branch does so.
%we extend Chang-Papanikolas-Yu's theorem (\cite[Theorem 4.3.2]{CPY}) by expelling their convergence condition.

%%%%%%%%%%%%%%%%%%%%%
\subsection{Logarithms of tensor powers of Carlitz module}\label{sec:LogCn}
%\subsection{Single depth case}\label{sec:LogCn}
We  explain a method of continuation of
the logarithms associated with  tensor powers of the Carlitz module.
%This subsection is regraded to be a depth $1$-case of \S\ref{sec:LogAT}.
%It is a prototype of \S\ref{sec:LogAT}.

We begin with the review of the definition of $t$-modules (cf. \cite{BP}).
Let %$R$ be  an $F_q$-algebra and
$\C_\infty\{\tau\}$ be the twisted polynomial algebra
in the variable $\tau$ over $\C_\infty$ with the relation
$$
\tau \alpha=\alpha^q\tau \qquad \text{for} \quad\alpha\in \C_\infty.
$$
An {\it $n$-dimensional $t$-module} $E$  over $\C_\infty$ is %a pair $E=(\G_a^n,\rho)$
an $\F_q$-algebra homomorphism
$\rho_E:\F_q[t]\to \mathrm{Mat}_n(\C_\infty\{\tau\})$
such that for each $a\in \F_q[t]$,
$$
\rho_E(a)=\sum_i E_{a,i}\tau^i
$$
with $E_{a,i}\in \mathrm{Mat}_n(\C_\infty)$ and
$d\rho_E(a)-a\cdot I_n$
(where $d\rho_E(a)$ mean $E_{a,0}$) is a nilpotent matrix.
We denote the $t$-module whose action is given by $d\rho_E$  by $\Lie_E$.
One can show that there exists a unique $\F_q$-linear $n$-variable power series of the form
$\Exp_E=\tau^0+\sum_{i=1}^\infty\alpha_i\tau^i$ with $\alpha_i\in\mathrm{Mat}_n(\C_\infty) $
such that
$$
\Exp_E\circ d\rho_E(a)=\rho_E(a)\circ \Exp_E.
$$
The logarithm $\Log_E$ is defined to be the formal power series which is inverse to $\Exp_E$ and has the property
\begin{equation}\label{eq:log and action}
\Log_E\circ \rho_E(a)=d\rho_E(a)\circ \Log_E.
\end{equation}
We note that $\Exp_E$ converges everywhere on $\C_\infty^n$ while
$\Log_E$ %has a finite radius of convergence.
converges on a certain milti-disk centered at the origin
(cf. \cite[Proposition/Definition 2.4.3]{AT90}).

For a positive integer $n$ we denote by $\mathrm C^{\otimes n}$
to be the $n$-th tensor power of the Carlitz module $\mathrm C$ (cf. \cite{AT90}).
It is given by an $\F_q$-algebra homomorphism
$\rho_n:\F_q[t]\to \mathrm{Mat}_n(\C_\infty\{\tau\}) $
determined by 
$\rho_n(t)=\theta I_n+N +E\tau$
with
$$
N=
\left(
    \begin{array}{cccc}
      0 & 1      &  &  \\
 %       & 0     & \ddots & \vdots \\
        &  \ddots & \ddots & \\
        &          & 0 & 1 \\
        &          &        & 0
    \end{array}
  \right), 
\qquad
E=
\left(
    \begin{array}{cccc}
      0 & 0 & \ldots & 0 \\
 \vdots & \vdots & \ddots & \vdots \\
      0 & 0 & \ldots & 0 \\
      1 & 0 & \ldots & 0
    \end{array}
  \right).
$$
The corresponding  $\Log_{\mathrm C^{\otimes n}}$
is an $\F_q$-linear map which satisfies
\begin{equation}\label{eq:log and action for Cn}
\Log_{\mathrm C^{\otimes n}}\circ (\theta I + N + E\tau)((z_1,\dots, z_n)^\T)
%({\mathfrak z})
=(\theta I + N)\circ \Log_{\mathrm C^{\otimes n}}((z_1,\dots, z_n)^\T)
%({\mathfrak z})
\end{equation}
%for $\mathfrak z=(z_1,\dots, z_n)^\T\in\C_\infty^n$.
in the region where  both the sides converge.
  Here $\T$ stands for the transpose.
In \cite[Proposition/Definition 2.4.3]{AT90},
it is shown that  the formal power series
$\Log_{\mathrm C^{\otimes n}}((z_1,\dots, z_n)^\T)$ converges when
% $\mathfrak z=(z_1,\dots, z_n)^\T\in\C_\infty^n$ with
\begin{equation}\label{eq:logn convergence condition}
|z_i|_\infty<|\theta|_\infty^{i-n+\frac{nq}{q-1}}
\qquad  (1\leqslant i \leqslant n).
\end{equation}
The continuation of $\Log_{\mathrm C^{\otimes n}}$ can be done as follows:
%Let $e_i$ be the unit vector of $\C_\infty^n$
%whose $i$-th coordinate is $1$.
%Take $\mathfrak z=(z_1,\dots, z_n)^\T=\sum_i z_ie_i$ %\in\C_\infty^n$
%enough small that locates
%inside the radius of convergence of $\Log_n$.
%We denote 
%$\Log_n(z_ie_i)=\sum_j \ell_{ij}(z_i)e_j$ for $z_i\in\C_\infty$.
%Then by linearity, we have
%$\Log_n({\mathfrak z})=\sum_{i,j} \ell_{ij}(z_i)e_j$. 
%The equation \eqref{eq:log and action} for $a=\theta$ is
%$$
%\Log_n\circ\rho(t)
%\begin{pmatrix}
%z_1 \\ \vdots \\ z_n
%\end{pmatrix}
%=\Log_n
%\begin{pmatrix}
%\theta z_1+z_2 \\ \vdots \\ \theta z_{n-1}+z_n \\ \theta z_n+z_1^q
%\end{pmatrix}
%$$
% 
%Set 
%$$\ell_{n,n}(z)=\Li_n(z)
%%=\sum_{0\leqslant k}\frac{z^{(k)}}{\{(t-\theta^{(1)})\cdots (t-\theta^{(k)})\}^n}
%$$
%for $z\in\C_\infty$ and define inductively
%$$
%\ell_{i,n}(z):=\theta\ell_{i+1,n}(z)-\ell_{i+1,n}(\theta z)
%$$
%for $1\leqslant i <n$, which has a presentation for a sufficiently small $z$
%$$
%\ell_{i,n}(z):=\sum_{0\leqslant k}\frac{z^{(k)}}{\{(\theta-\theta^{(1)})\cdots (\theta-\theta^{(k-1)})\}^n(\theta-\theta^{(k)})^{i}}.
%$$
For any map $F:\TT\to\TT$ we define $L(F):\TT\to\TT$
by 
$$L(F)(Z)=tF(Z)-F(\theta Z)$$
for $Z\in\TT$.
Since we have
$t\vec\LLii_0(Z)\equiv \vec\LLii_0(tZ)$,  \ %\bmod{\F_q[t]}$,
$\vec\LLii_0(Z^{(1)})\equiv \vec\LLii_0(Z)^{(1)}$ %\ \bmod{\F_q[t]}$,
and
$\vec\LLii_0(Z+Z')\equiv \vec\LLii_0(Z)+\vec\LLii_0(Z')\ \bmod{\F_q[t]}$
for $Z, Z'\in\TT$  by our  construction in \S \ref{sec:prolong PL},
we have
$$
L(\vec\LLii_n)(z)=t\vec\LLii_n(z)-\vec\LLii_n(\theta z) 
\equiv \vec\LLii_n(tz)-\vec\LLii_n(\theta z)
\equiv \vec\LLii_n((t-\theta) z)\ \bmod{\Omega^{-n}\F_q[t]}
$$
for $z\in\C_\infty$.
Since
$$
L^i(\vec\LLii_n)(z)
\equiv \vec\LLii_n((t-\theta)^i z)
\ \bmod{\Omega^{-n}\F_q[t]},
$$
we have
\begin{align}
L^n(\vec\LLii_n)(z)
&\equiv \vec\LLii_n((t-\theta)^n z) %\\ %\ \bmod{\Omega^{-n}\F_q[t]} \\
\equiv  \Omega^{-n}\vec\LLii_0(\Omega^{n}(t-\theta)^n z) 
\equiv  \Omega^{-n}\vec\LLii_0((\Omega^{(-1)})^{n}z).  \notag \\
%\ \bmod{\Omega^{-n}\F_q[t]}.
\intertext{By \eqref{eq:func eq for Li0}, we have}
&\equiv  \Omega^{-n}\{\vec\LLii_0(\Omega^{n} z^{(1)})+(\Omega^{(-1)})^nz\} \notag \\
%\ \bmod{\Omega^{-n}\F_q[t]}  \\
&\equiv \vec\LLii_n(z^{(1)})+(t-\theta)^nz \ \bmod{\Omega^{-n}\F_q[t]} .
\label{eq: recursive LnLin}
\end{align}

By following \cite{CGM, GN}, we consider the map %, which can be defined  
for $r\geqslant q$
$$
\delta_0^n:\TT_r\to \C_\infty^n\ (=\mathrm{Mat}_{n\times 1}(\C_\infty))
$$
sending each $f=\sum_{i\geqslant 0}c_i(t-\theta)^i\in\TT_r$ to
$(c_{n-1},\dots, c_1,c_0)^\T$.
By \cite[Proposition 2.5.5]{AT90}, we have
\begin{equation}\label{eq:delta0=lambda}
\delta_0^n(\Omega^{-n}\F_q[t])=\Lambda_n
\end{equation}
where $\Lambda_n$ is the $A$-module  under the $d\rho_n$-action
given by $\ker\Exp_{{\mathrm C}^{\otimes n}}$.
It induces the $\C_\infty$-linear map
$$
\delta_0^n:\TT_r/\Omega^{-n}\F_q[t]
\to \C_\infty^n/\Lambda_n.
$$
We have
$\vec\LLii_n(z)\in\TT_q/\Omega^{-n}\F_q[t]$
for each $z\in\C_\infty$ and
whence $L^i(\vec\LLii_n(z))\in\TT_q/\Omega^{-n}\F_q[t]$
for $i=1,\dots,n-1$.
%we introduce the extension of the logarithm %$\log_E$ for $E=\C^{\otimes n}$:
%$\Log_{\mathrm C^{\otimes n}}$:

\begin{defn}
Let $\vec e_k$ be the unit vector of $\C_\infty^n$ whose $k$-th coordinate is $1$.
We define the $\C_\infty$-linear map
$$
\vec\Log_n:\C_\infty^n\to \C_\infty^n/\Lambda_n
$$
by sending
$(z_1,\dots,z_n)^\T=\sum_{k=1}^nz_k\vec e_k$ to
%$(\sum_k \ell_{k,1}(z_k),\dots,\sum_k\ell_{k,n}(z_k))^\T=
$$
\sum_{k=1}^n\delta_0^n\left(L^{n-k}(\vec\LLii_n(z_k))\right)
\equiv\delta_0^n\circ \vec\LLii_n\left(\sum_{k=1}^n(t-\theta)^{n-k}z_k\right)
\quad \bmod \Lambda_n.
$$
\end{defn}

The following is an extension of the property  \eqref{eq:log and action for Cn}.

\begin{prop}\label{prop:loq modulo compatibility}
For $(z_1,\dots,z_n)\in\C_\infty^r$, we have
\begin{equation}\label{eq: log compatibility congruence}
\vec\Log_n ((\theta I_n +N+E\tau)(z_1,\dots,z_n)^\T)
\equiv (\theta I_n+N) \vec\Log_n ((z_1,\dots,z_n)^\T)
 \ \bmod \Lambda_n.
\end{equation}
\end{prop}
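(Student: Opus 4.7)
The plan is to rewrite both sides as $\delta_0^n$ applied to elements of $\TT_q/\Omega^{-n}\F_q[t]$ built from $\vec\LLii_n$, establish the required identity upstairs, and descend to $\C_\infty^n/\Lambda_n$ via \eqref{eq:delta0=lambda}.

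Set $S:=\sum_{k=1}^n (t-\theta)^{n-k} z_k$, so that $\vec\Log_n(v)=\delta_0^n(\vec\LLii_n(S))$ by definition. A direct matrix computation yields $(\theta I_n+N+E\tau)v=(\theta z_1+z_2,\dots,\theta z_{n-1}+z_n,\theta z_n+z_1^q)^\T$, and a short re-indexing of $\sum_{k=1}^n(t-\theta)^{n-k}\cdot(k\text{-th component})$ collapses to $tS-(t-\theta)^n z_1+z_1^q$. Next I use the $\F_q[t]$-linearity of $\vec\LLii_n$ modulo $\Omega^{-n}\F_q[t]$ together with \eqref{eq: recursive LnLin}, which for $z_1\in\C_\infty$ (so that $z_1^{(1)}=z_1^q$) reads $\vec\LLii_n(z_1^q)\equiv\vec\LLii_n((t-\theta)^n z_1)-(t-\theta)^n z_1$; the two $\vec\LLii_n((t-\theta)^n z_1)$ contributions cancel and I obtain
$$\vec\LLii_n\bigl(tS-(t-\theta)^n z_1+z_1^q\bigr)\equiv t\vec\LLii_n(S)-(t-\theta)^n z_1 \bmod \Omega^{-n}\F_q[t].$$

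To descend under $\delta_0^n$, I verify $\delta_0^n(tf)=(\theta I_n+N)\delta_0^n(f)$ for $f\in\TT_q$ by writing $t=\theta+(t-\theta)$ and noting that multiplication by $(t-\theta)$ shifts the $(t-\theta)$-Taylor coefficients of $f$ down by one index, exactly matching the action of $N$. Since the $(t-\theta)$-expansion of $(t-\theta)^n z_1$ starts in degree $n$, one also has $\delta_0^n((t-\theta)^n z_1)=0$. Applying $\delta_0^n$ to the displayed congruence then produces \eqref{eq: log compatibility congruence}. The real content of the argument is the use of \eqref{eq: recursive LnLin}: it is what absorbs the Frobenius-twist term $E\tau v$ on the left-hand side, and without it the $\vec\LLii_n(z_1^q)$ piece would have no natural relation to the other summands. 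Everything else is linear-algebraic bookkeeping on the quotient $\TT_q/\Omega^{-n}\F_q[t]$.
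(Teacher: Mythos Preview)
Your proof is correct. The ingredients are exactly those of the paper's argument—$\F_q[t]$-linearity of $\vec\LLii_n$, the identity \eqref{eq: recursive LnLin}, and the compatibility of $\delta_0^n$ with multiplication by $t$—but you organize them more efficiently. The paper works with the first form of the definition $\vec\Log_n(v)=\sum_{k}\delta_0^n\bigl(L^{n-k}(\vec\LLii_n(z_k))\bigr)$, sets $\vec\ell_k(z)=\delta_0^n\bigl(L^{n-k}(\vec\LLii_n(z))\bigr)$, derives the coordinate recursion $\vec\ell_i(z)\equiv(\theta I_n+N)\vec\ell_{i+1}(z)-\vec\ell_{i+1}(\theta z)$ (with \eqref{eq: recursive LnLin} supplying the boundary case $\vec\ell_n(z^q)\equiv(\theta I_n+N)\vec\ell_1(z)-\vec\ell_1(\theta z)$), and then telescopes. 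You instead use the second form $\vec\Log_n(v)=\delta_0^n(\vec\LLii_n(S))$ and do the telescoping once upstairs in $\TT_q/\Omega^{-n}\F_q[t]$ via the single-line computation $S'=tS-(t-\theta)^n z_1+z_1^q$. Your explicit observation $\delta_0^n(tf)=(\theta I_n+N)\delta_0^n(f)$ is precisely what underlies the paper's expansion $L(F)(z)=\{\theta+(t-\theta)\}F(z)-F(\theta z)$ followed by $\delta_0^n$; you just isolate it cleanly. Net effect: same proof, fewer indices.
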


\begin{proof}
The right hand side is well-defined because we have $(\theta I_n+N)\Lambda _n\subset \Lambda_n$
by $\Lambda_n=\ker\Exp_{{\mathrm C}^{\otimes n}}$.
Put
%$$
%(\ell_{n,1}(z),\dots,\ell_{n,n}(z))^\T:=
%\delta_0^n(\LLii_n(z)) \ \bmod \Lambda_n
%$$
%and
$$
\vec\ell_k(z)=(\ell_{k,1}(z),\dots,\ell_{k,n}(z))^\T:=
\vec\Log_n (z\vec e_k)=
\delta_0^n\left(L^{n-k}(\vec\LLii_n(z))\right)
% \ \bmod \Lambda_n
$$
in  $\C_\infty^n/\Lambda_n$
for $1\leqslant k \leqslant n$.
Since
$L(F)(z)=\{\theta+(t-\theta)\}F(z)-F(\theta z)$ for any map $F:\TT\to\TT$,
we have
\begin{align*}
&\vec\ell_i(z)=(\ell_{i1}(z),\dots,\ell_{in}(z))^\T
=\delta_0^n\left(L^{n-i}(\vec\LLii_n(z))\right)
=\delta_0^nL^{n-i-1}\left(L(\vec\LLii_n(z))\right)\\
&=\delta_0^nL^{n-i-1}\left(\{\theta+(t-\theta)\}\vec\LLii_n(z)
-\vec\LLii_n(\theta z)\right)
 \\
&\equiv (\theta \ell_{i+1,1}(z)+\ell_{i+1,2}(z)-\ell_{i+1,1}(\theta z),\dots,
\theta \ell_{i+1,n-1}(z)+\ell_{i+1,n}(z)-\ell_{i+1,n-1}(\theta z), \\
&\qquad\qquad
\theta \ell_{i+1,n}(z)-\ell_{i+1,n}(\theta z))^\T \\
&\equiv (\theta I_n +N)( \ell_{i+1,1}(z),\dots,\ell_{i+1,n}(z))^\T-(\ell_{i+1,1}(\theta z),\dots,\ell_{i+1,n}(\theta z))^\T  \\
&\equiv (\theta I_n +N)\vec\ell_{i+1}(z)-\vec\ell_{i+1}(\theta z)
\end{align*}
for $1\leqslant i<n$.
Actually the equation holds for $i=0$.
By \eqref{eq: recursive LnLin}, we also obtain
\begin{align*}
(\ell_{n1}(z^{(1)}),\dots,\ell_{nn}(z^{(1)}))^\T  
%&\equiv 
%(\theta\ell_{11}(z),\dots, \theta\ell_{1n}(z))^\T +
%(\ell_{12}(z),\dots,\ell_{1n}(z),0)^\T
%-(\ell_{11}(\theta z),\dots,\ell_{1n}(\theta z))^\T  
\equiv (\theta I_n +N)(\ell_{11}(z),\dots,\ell_{1n}(z))^\T
-(\ell_{11}(\theta z),\dots,\ell_{1n}(\theta z))^\T ,
\end{align*}
that is
$$
\vec\ell_{n}(z^{(1)})\equiv (\theta I_n +N)\vec\ell_{1}(z)-\vec\ell_{1}(\theta z).
$$
Therefore
\begin{align*}
&\vec\Log_n ( (\theta I_n +N+E\tau)(z_1,\dots,z_n)^\T) 
=\vec\Log_n((\theta z_1+ z_2,\dots, \theta z_{n-1}+z_n,\theta z_n+z_1^{(1)})^\T) \\
&=\sum_{i=1}^{n-1}\vec\ell_i(\theta z_i+z_{i+1})+\vec\ell_n(\theta z_n+z_1^{(1)}) 
=\sum_{i=1}^{n}\vec\ell_i(\theta z_i)+
\sum_{i=1}^{n-1}\vec\ell_i( z_{i+1})
+\vec\ell_n(z_1^{(1)}) \\
&\equiv \sum_{i=1}^{n-1}\vec\ell_{i+1}(\theta z_{i+1})+
\sum_{i=1}^{n-1}\vec\ell_i( z_{i+1})
+(\theta I_n+N)\vec\ell_1(z_1) 
=\sum_{i=1}^n(\theta I_n+N)\vec\ell_i(z_i)  \\
&=(\theta I_n+N) \vec\Log_n ((z_1,\dots,z_n)^\T).
\end{align*}
%They are equivalent to \eqref{eq: log compatibility congruence}.
Thus we obtain the claim.
\end{proof}

The following theorem assures that 
$\vec\Log_n$ is an analytic continuation of $\Log_{\mathrm C^{\otimes n}}$.

\begin{thm}\label{thm:ana con log C}
\rm{(1).} $\Log_{\mathrm C^{\otimes n}}((z_1,\dots,z_n)^\T) \equiv
\vec\Log_n((z_1,\dots,z_n)^\T)
\bmod\Lambda_n$
when $(z_1,\dots,z_n)$ is
in the convergence region of \eqref{eq:logn convergence condition}.

\rm{(2).}
Let $\Exp_{n}:\C_\infty^n/\Lambda_n\to \C_\infty^n$ be the induced map from 
$\Exp_{\mathrm C^{\otimes n}}:\C_\infty^n\to \C_\infty^n$.
Then $\vec\Log_{n}$ is the inverse of $\Exp_{n}$.
%i.e. $\vec\Log_{n}\circ\Exp_{\mathrm C^{\otimes n}}=\id_{\C_\infty^{n}/\Lambda_n}$
%and 
% $\Exp_{\mathrm C^{\otimes n}}\circ \vec\Log_{n}=\id_{\C_\infty^{n}}$.
 
%in the region of convergence. 
\end{thm}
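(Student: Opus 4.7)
I will prove (1) by reducing $\vec\Log_n$ on the convergence region \eqref{eq:logn convergence condition} to the classical Anderson-Thakur expression for $\Log_{\mathrm C^{\otimes n}}$, and deduce (2) by propagating the local identity $\vec\Log_n\circ\Exp_{\mathrm C^{\otimes n}}\equiv\mathrm{id}$ to all of $\C_\infty^n$ using the functional equation of Proposition \ref{prop:loq modulo compatibility}.

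For part (1), on the region \eqref{eq:logn convergence condition} each series $\LLii_n(z_k)$ converges, and by construction of $\vec\LLii_n$ (applying Lemma \ref{lem:AS} to the Artin--Schreier equation defining $\vec\LLii_0(\Omega^{n}z_k)$) the branch $\vec\LLii_n(z_k)$ is congruent to $\LLii_n(z_k)$ modulo $\Omega^{-n}\F_q[t]$. Substituting into the definition of $\vec\Log_n$ and using \eqref{eq:delta0=lambda} to push $\delta_0^n$ through the quotient, I obtain
$$
\vec\Log_n((z_1,\dots,z_n)^\T)\equiv \sum_{k=1}^{n}\delta_0^n\!\left(L^{n-k}(\LLii_n(z_k))\right)\bmod\Lambda_n.
$$
The right-hand side is the classical Anderson-Thakur expression for $\Log_{\mathrm C^{\otimes n}}$ on the convergence region (\cite{AT90}). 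For a self-contained verification, one checks that this right-hand side is $\F_q$-linear, has identity linear part at the origin, and satisfies the functional equation \eqref{eq:log and action for Cn} on the convergence region; the last point runs exactly as in the proof of Proposition \ref{prop:loq modulo compatibility}, except that now every term converges honestly in $\C_\infty^n$ rather than only modulo $\Lambda_n$. These three properties pin down $\Log_{\mathrm C^{\otimes n}}$ uniquely.

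For part (2), since $\Exp_{\mathrm C^{\otimes n}}\colon\C_\infty^n\to\C_\infty^n$ is an entire surjection with kernel $\Lambda_n$, the induced map $\Exp_n$ is a bijection and it suffices to show $\vec\Log_n\circ\Exp_{\mathrm C^{\otimes n}}\equiv\mathrm{id}\bmod\Lambda_n$. Given $y\in\C_\infty^n$, set $y_k:=(\theta I_n+N)^{-k}y=d\rho_n(t)^{-k}y$; because $(\theta I_n+N)^{-1}$ has operator norm $|\theta|_\infty^{-1}<1$, for $k$ sufficiently large both $y_k$ and $\Exp_{\mathrm C^{\otimes n}}(y_k)$ lie in a small disk on which $\Log_{\mathrm C^{\otimes n}}$ is a genuine inverse of $\Exp_{\mathrm C^{\otimes n}}$ and on which \eqref{eq:logn convergence condition} holds. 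From $\Exp_{\mathrm C^{\otimes n}}(y)=\rho_n(t)^k\Exp_{\mathrm C^{\otimes n}}(y_k)$ and iterating Proposition \ref{prop:loq modulo compatibility},
$$
\vec\Log_n(\Exp_{\mathrm C^{\otimes n}}(y))\equiv(\theta I_n+N)^k\,\vec\Log_n(\Exp_{\mathrm C^{\otimes n}}(y_k))\equiv(\theta I_n+N)^k y_k=y\bmod\Lambda_n,
$$
where the middle congruence uses part (1).

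The main obstacle is the identification in step (1) with the Anderson-Thakur formula: matching the sum $\sum_{k}\delta_0^n L^{n-k}(\LLii_n(z_k))$ coordinate-by-coordinate with the actual series expansion of $\Log_{\mathrm C^{\otimes n}}$ is routine but requires carefully tracking how $\delta_0^n$ interacts with the Frobenius twists hidden in $\LLii_n$ and with the shift $L(F)(Z)=tF(Z)-F(\theta Z)$. Once (1) is in hand, part (2) is essentially formal given the norm estimate on $(\theta I_n+N)^{-1}$ and the $t$-module compatibility of Proposition \ref{prop:loq modulo compatibility}.
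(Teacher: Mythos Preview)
Your proposal is correct and, for part (2), essentially identical to the paper's argument: both shrink $y$ via $y_k=(\theta I_n+N)^{-k}y$ until $\Exp_{\mathrm C^{\otimes n}}(y_k)$ lands in the convergence region, then use Proposition~\ref{prop:loq modulo compatibility} together with part (1) to propagate back.

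For part (1) there is a minor but genuine difference in execution. The paper does not verify the full characterizing properties of $\Log_{\mathrm C^{\otimes n}}$ as you propose. Instead it observes that both $\vec l_i(z):=\Log_{\mathrm C^{\otimes n}}(z\vec e_i)$ and $\vec\ell_i(z):=\delta_0^n(L^{n-i}(\vec\LLii_n(z)))$ satisfy the same recursion $\vec f_i(z)=(\theta I_n+N)\vec f_{i+1}(z)-\vec f_{i+1}(\theta z)$ (the former from \eqref{eq:log and action for Cn}, the latter from the computation in the proof of Proposition~\ref{prop:loq modulo compatibility}), so it suffices to check the single case $i=n$; that case is then imported from \cite[Theorem~3.3.5]{CGM} and \cite[Theorem~4.14]{GN} as the identity $\Log_{\mathrm C^{\otimes n}}((0,\dots,0,z)^\T)=\delta_0^n(\LLii_n(z))$. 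Your route---checking $\F_q$-linearity, identity linear part, and the exact functional equation~\eqref{eq:log and action for Cn} for $\sum_k\delta_0^n(L^{n-k}(\LLii_n(z_k)))$ and invoking uniqueness of the logarithm series---is more self-contained but, as you note, requires the coefficient-tracking that the paper sidesteps by citing the literature. (A small caveat: the formula you attribute to \cite{AT90} is actually located in \cite{CGM,GN}.)
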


\begin{proof}
(1). 
We put 
$$\vec l_i(z)= (l_{i1}(z),\dots,l_{in}(z))^\T:=\Log_{\mathrm C^{\otimes n}}(z\vec e_i)$$
for each $i$.
By \eqref{eq:log and action for Cn}, we have
$$
%\Log_{\mathrm C^{\otimes n}}(z\vec e_i)
%=(\theta I +N)\Log_{\mathrm C^{\otimes n}}(z\vec e_{i+1})-
%\Log_{\mathrm C^{\otimes n}}(\theta z \vec e_{i+1}),
\vec l_i(z)=(\theta I_n+N)\vec l_{i+1}(z)-\vec l_{i+1}(\theta z)
$$
for $1\leqslant i <n$.
Hence to show $\vec\ell_i(z)=\vec l_i(z)$ for all $i$, it is enough to prove
$\vec l_n(z)=\vec \ell_n(z)$. % where they converge.
By \cite[Theorem  3.3.5]{CGM} and \cite[Theorem 4.14]{GN}, we have
$$
\Log_{\mathrm C^{\otimes n}}((0,\dots, 0, z)^\T)=\delta_0^n(\LLii_n(z))
$$
for  %$\mathfrak z=(0,\dots, 0, z)^\T\in\C_\infty^n$ with
$|z|_\infty<|\theta|_\infty^{\frac{nq}{q-1}}$,
which means 
$\vec l_n(z)=\vec \ell_n(z)$.
Hence our claim is proved.
%from which we obtain a recursive formula:
%\begin{align*}
%&(l_{i1}(z),\dots,l_{in}(z))^\T
%=(\theta I +N)(l_{i+1,1}(z),\dots,l_{i+1,n}(z))^\T-(l_{i+1,1}(\theta z),\dots,l_{i+1,n}(\theta z))^\T  \notag \\
%&=(\theta l_{i+1,1}(z)+l_{i+1,2}(z)-l_{i+1,1}(\theta z),\dots,
%\theta l_{i+1,n-1}(z)+l_{i+1,n}(z)-l_{i+1,n-1}(\theta z), \\
%&\qquad\qquad
%\theta l_{i+1,n}(z)-l_{i+1,n}(\theta z))^\T
%\end{align*}
%for $i\geqslant 1$.
%Then by the same arguments in Proposition \ref{prop:loq modulo compatibility},
%we obtain
%$$
%(l_{i,1}(z),\dots,l_{i,n}(z))^\T=
%\delta_0^n\left(L^{n-i}(\LLii_n(z))\right)
%$$
%in  $\C_\infty^n$
%for $1\leqslant i \leqslant n$.
%Therefore by linearity of $\Log_{\mathrm C^{\otimes n}}$
%we have
%$$
%\Log_{\mathrm C^{\otimes n}}((z_1,\dots, z_n)^\T)
%=\sum_{i=1}^n\delta_0^n\left(L^{n-i}(\LLii_n(z_i))\right)
%$$
%for $(z_1,\dots, z_n)^\T\in\C_\infty^n$
%satisfying \eqref{eq:logn convergence condition}.
%Since, as we saw in \S \ref{sec:prolong PL},
%$\vec\LLii_n(z)$ is congruent to 
%$\LLii_n(z)$ modulo $\Omega^{-n}\F_q[t]$ when $|z|_\infty<|\theta|^{\frac{qn}{q-1}}$,
%our claim is proved by \eqref{eq:delta0=lambda}.

\smallskip
(2). Let $\mathfrak z\in\C_\infty^n$.
Since the sequence ${\mathfrak z}_k=(\theta I + N)^{-k}\mathfrak z$ ($k=0,1,2,\dots$)
goes to $0\in\C_\infty^n$ and $\Exp_{\mathrm C^{\otimes n}}$ is continuous,
there is a ${\mathfrak z}_m$ such that
$\Exp_{\mathrm C^{\otimes n}}({\mathfrak z}_m)$
lies in  the region defined by 
\eqref{eq:logn convergence condition}.
%Since $\Log_{\mathrm C^{\otimes n}}$ is the inverse of $\Exp_{\mathrm C^{\otimes n}}$,
Then we have
\begin{align*}
\vec\Log_{n}\circ\Exp_{\mathrm C^{\otimes n}} (\mathfrak z)
&=\vec\Log_{n}\circ\Exp_{\mathrm C^{\otimes n}}((\theta I + N)^{m}\mathfrak z_m)  \\
&=\vec\Log_{n}\circ(\theta I + N+E\tau)^{m}\circ\Exp_{\mathrm C^{\otimes n}}(\mathfrak z_m)  \\
&=(\theta I + N)^{m}\circ\vec\Log_{n}\circ\Exp_{\mathrm C^{\otimes n}}(\mathfrak z_m)  \\
&=(\theta I + N)^{m}\circ\Log_{\mathrm C^{\otimes n}}\circ\Exp_{\mathrm C^{\otimes n}}(\mathfrak z_m)  \\
&=(\theta I + N)^{m}(\mathfrak z_m) =\mathfrak z.
\end{align*}
Since $\Exp_{\mathrm C^{\otimes n}}: \C_\infty^n\to\C_\infty^n$
is a surjection with $\ker\Exp_{\mathrm C^{\otimes n}}=\Lambda_n$,
we get that $\vec\Log_{n}$ is the inverse of $\Exp_n$
\end{proof}

%As a corollary we get a logarithmic interpretation of
%our prolonged Carlitz polylogarithm.
%
%\begin{cor}
%For $z\in\C_\infty$, we have
%$$
%\vec\Log_n((0,\dots,0,z)^\T)\equiv
%\delta_0^n(\vec\LLii_n(z))
%\equiv (\ast,\dots,\ast, \vec\Li_n(z))^\T
%\bmod \Lambda_n.
%$$
%\end{cor}
%
%\begin{proof}
%By Theorem \ref{thm:ana con log C},
%%we learn that 
%our extended Carlitz polylogarithm
%$\vec\Li_{n}$ is described as
%$$
%\vec\Li_{n}:
%\C_\infty e_n\hookrightarrow \C_\infty^n
%%\overset{\vec\Log_n}
%{\xrightarrow{\sim}} \C_\infty^n/\Lambda_n
%%\overset{\mathrm{pr}_n}
%{\twoheadrightarrow} 
%\C_\infty/\tilde\pi^n A
%$$
%where the second map is $\vec\Log_n=\Exp_{n}^{-1}$
%and the third map is the projection to the $n$-th component.
%\end{proof}

%\begin{cor}
%The map $\vec\Li_n$ realizes a rigid analytic isomorphism
%$\C_\infty\simeq \C_\infty/\tilde\pi^nA$.
%\end{cor}
%
%\begin{proof}
%We denote $\pr_n:\C_\infty^n\to \C_\infty$ to be the projection the last component.
%It induces the projection $\pr_n:\C_\infty^n/\Lambda_n \to \C_\infty/\tilde\pi^n A$.
%% because $\Lambda_n$ is free $A$-module of rank $1$ 
%%with generator $\{(t-\theta)\Omega\}^{-n}$ by \cite[\S 2.5]{AT90}
%\end{proof}

\begin{rem}
The logarithms of $t$-modules associated with Anderson-Thakur dual $t$-motive (\cite{AT90})
are discussed in \cite{CGM, CM, GN}.
They described  a certain special value of  their logarithms in terms of  CMSPL's.
The above logarithm $\Log_{\mathrm C^{\otimes n}}$ is regarded as the simplest case.
The author expects that their logarithms could be also analytically continued to the whole space
by elaborate description of the technical lemma in \cite[Lemma 4.2.1]{CGM}
in terms of CMSPL's and some sort of their relatives.
\end{rem}

\subsection{Orthogonality}\label{subsec:orthogonality}
%We show the orthogonality of 
%$t$-motivic CMPL and CMSPL,
%which extends 
The following functional relation was shown in \cite{GN}:
\begin{align*}
\LLii_{n_1,\dots,n_d}(Z_1,\dots,Z_d) &
=\sum_{i=2}^d (-1)^i\LLii_{n_{i-1},\dots,n_1}^\star(Z_{i-1},\dots,Z_1)
\LLii_{n_i,\dots,n_d}(Z_i,\dots,Z_d) \\
&+(-1)^{d+1}\LLii_{n_d,\dots,n_1}^\star(Z_d,\dots,Z_1)
\end{align*}
for $n_1,\dots,n_d\in\N$ and
$Z_1,\dots,Z_d\in\TT$ belonging to all the regions of convergence of
 each term.
% It can be checked by a quite formal computation.
The orthogonal property below is an extension of
the above relation to all branches:

\begin{thm}
Let $n_1,\dots,n_d\in\N$ and $Z_1,\dots,Z_d\in\TT$.
For any branch %Take a branch 
$\vec\LLii_{n_1,\dots,n_d}^{o}(Z_1,\dots,Z_d)\in \TT^d$ %\Mat_{1,d}(\TT) $
and
$\vec\LLii_{n_d,\dots,n_1}^{\ast,o}(Z_d,\dots,Z_1)\in \TT^d$,
%\in\Mat_{d,1}(\TT)$,
we have
$$
\left(
\begin{array}{c}
\vec\LLii_{n_d,\dots,n_1}^{\star,o}(-Z_d,\dots,-Z_1) \\
\Omega^{-n_1-\cdots-n_d}     
\end{array}
\right)^\T
\cdot
\left(
\begin{array}{c}
\Omega^{-n_1-\cdots-n_d}      \\
\vec\LLii_{n_1,\dots,n_d}^{o}(Z_1,\dots,Z_d)  
\end{array}
\right)
\equiv 0 \
\bmod \F_q[[t]]\cdot\Omega^{-2(n_1+\cdots+n_d)}.
$$
Here $\vec\LLii_{n_d,\dots,n_1}^{\star,o}(-Z_d,\dots,-Z_1) $
means the vector putting
$\LLii_{n_i,\dots,n_1}^{\star,o}(-Z_i,\dots,-Z_1)=
(-1)^i\LLii_{n_i,\dots,n_1}^{\star,o}(Z_i,\dots,Z_1)$
for each $i$.
%holds for any branches
%$\vec\LLii_{n_1,\dots,n_d}^{o}(Z_1,\dots,Z_d)$ and
%$\vec\LLii_{n_d,\dots,n_1}^{\ast,o}(Z_d,\dots,Z_1)$. 
%Here 
%$
%\vec\LLii_{n_d,\dots,n_1}^{\ast,o}(-Z_d,\dots,-Z_1) 
%=
%\left(
%    \begin{array}{cccc}
%         (-1)^d &    & & \\ 
%             & (-1)^{d-1} & & \\
%            & & \ddots &  \\
%          & & & (-1)
%    \end{array}
%    \right)
%\vec\LLii_{n_d,\dots,n_1}^{\ast,o}(Z_d,\dots,Z_1).
%$
\end{thm}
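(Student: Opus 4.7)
The plan is to express the inner product as $\Omega^{-2s}X$ for a concrete element $X\in\TT$ (with $s:=n_1+\cdots+n_d$), verify that $X$ is fixed by Frobenius twist ($X=X^{(1)}$), and conclude from this that $X\in\F_q[[t]]$. The conclusion step works because any $X\in\TT$ with $X=X^{(1)}$ must have each coefficient $c_i\in\F_q$, and the Tate convergence $|c_i|_\infty\to 0$ then forces $c_i=0$ for all but finitely many $i$, so in fact $X\in\F_q[t]\subset\F_q[[t]]$; the claimed congruence follows at once.

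First I would expand the inner product using the component formulae in Definitions \ref{def:multiple LLii} and \ref{def:multiple LLii star}, together with the sign convention $\LLii_{n_i,\dots,n_1}^{\star,o}(-Z_i,\dots,-Z_1)=(-1)^i\LLii_{n_i,\dots,n_1}^{\star,o}(Z_i,\dots,Z_1)$. A direct index comparison shows every summand carries a common factor $\Omega^{-2s}$, and, abbreviating
\[
A_k := \Omega^{n_1+\cdots+n_k}\LLii_{n_k,\dots,n_1}^{\star,o}(Z_k,\dots,Z_1),\qquad B_k := \Omega^{n_k+\cdots+n_d}\LLii_{n_k,\dots,n_d}^o(Z_k,\dots,Z_d)
\]
with conventions $A_0=B_{d+1}=1$, the inner product reduces to $\Omega^{-2s}X$, where
\[
X \;=\; B_1 \;+\; \sum_{i=2}^d (-1)^{i-1}A_{i-1}B_i \;+\; (-1)^d A_d \;\in\;\TT.
\]

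The crux is verifying $X = X^{(1)}$. Propositions \ref{lem:property of multiple LLii}(3) and \ref{lem:property of multiple LLii star}(3) furnish the two Artin-Schreier-type identities
\[
B_k - B_k^{(1)} \;=\; \Omega^{n_k}Z_k\,B_{k+1}^{(1)},\qquad A_k - A_k^{(1)} \;=\; \Omega^{n_k}Z_k\,A_{k-1}.
\]
Expanding each cross-term via the Leibniz splitting $A_{i-1}B_i - A_{i-1}^{(1)}B_i^{(1)} = A_{i-1}(B_i-B_i^{(1)}) + (A_{i-1}-A_{i-1}^{(1)})B_i^{(1)}$ and substituting the identities above produces, for each interior index $j\in\{2,\dots,d-1\}$, a pair of terms $\pm\Omega^{n_j}Z_j A_{j-1}B_{j+1}^{(1)}$ of opposite signs that cancel. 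The boundary contributions at $j=1$ and $j=d$ are precisely absorbed by the stand-alone summands $(B_1-B_1^{(1)})$ and $(-1)^d(A_d-A_d^{(1)})$ coming from the outer terms $B_1$ and $(-1)^d A_d$ of $X$, yielding $X - X^{(1)} = 0$.

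The main technical nuisance will be keeping the signs and index shifts straight in this telescoping cancellation; the rest is entirely formal, and once $X = X^{(1)}$ is established the claimed congruence $\Omega^{-2s}X \in \Omega^{-2s}\F_q[[t]]$ is immediate.
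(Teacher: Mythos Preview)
Your proof is correct and follows essentially the same strategy as the paper: both arguments use the difference equations of Propositions \ref{lem:property of multiple LLii}(3) and \ref{lem:property of multiple LLii star}(3) to show that the relevant quantity is Frobenius-invariant, hence lies in $\F_q[[t]]$ (in fact $\F_q[t]$, as you observe). The only difference is packaging: the paper assembles the branches into lower-triangular matrices $\Psi,\Psi_\star\in\GL_{d+1}(\TT)$ and a matrix $\Phi$ with $\Psi=\Phi^{(1)}\Psi^{(1)}$ and $\Psi_\star^{(1)}=\Psi_\star\Phi^{(1)}$, whence $(\Psi_\star\Psi)^{(1)}=\Psi_\star\Psi$; your $X$ is exactly the $(d+1,1)$-entry of $\Psi_\star\Psi$, and your telescoping computation is what the matrix identity unpacks to at that entry.
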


\begin{proof}
Our proof is influenced by \cite[\S 4.2]{GN}.
Hereafter we fix $d$ generators 
$\vec\LLii_{n_1,\dots,n_d}^o(Z_1,\dots, Z_k)$
($0\leqslant k\leqslant d-1$) of
$\M_{n_1,\dots,n_d}^{Z_1,\dots,Z_{d-1}}$
as in Definition \ref{def:multiple LLii}.
By using their coordinates, we define the matrix  
$\Psi\in\GL_{d+1}(\TT)$ by
\begin{equation}\label{eq:Big Psi matrix}
{\footnotesize
%\Psi=
\left(
    \begin{array}{ccccc}
%       0      &   0   & 0 &\ldots  &0 \\
       \Omega^{n_1+\dots+n_d} &    0   & 0 & \ldots & 0\\
      \Omega^{n_1+\dots+n_d}\LLii_{n_d}^o(Z_d) &  \Omega^{n_1+\dots+n_{d-1}} & 0 & \ldots &\vdots \\
       \vdots & \Omega^{n_1+\dots+n_{d-1}}\LLii_{n_{d-1}}^o(Z_{d-1})   &  & & \vdots \\
        \vdots  &  \vdots & \ddots &0 &  \vdots\\
       \vdots    & \vdots   & \ddots  & \Omega^{n_1} & 0 \\
      \Omega^{n_1+\dots+n_d}\LLii_{n_1,\dots,n_d}^o(Z_1,\dots,Z_d)  & 
      \Omega^{n_1+\dots+n_{d-1}}\LLii_{n_1,\dots,n_{d-1}}^o(Z_1,\dots,Z_{d-1})  &    \dots &  \Omega^{n_1}\LLii_{n_1}^o(Z_1)  & 1
    \end{array}
  \right).
  }
\end{equation}
Similarly we also fix 
$d$ generators 
$\vec\LLii_{n_d,\dots,n_1}^{\star,o}(Z_d,\dots, Z_{d-k})$
($1\leqslant k\leqslant d$) of
$\M_{n_d,\dots,n_1}^{\star, Z_d,\dots,Z_{2}}$
%as in Definition \ref{def:multiple LLii star}
and define
$\Psi_\star\in\GL_{d+1}(\TT)$ by
\begin{equation}\label{eq:Big Psi star matrix}
%{\normalsize
%\Psi^\star=
\left(
    \begin{array}{ccccc}
%       0      &   0   & 0 &\ldots  &0 \\
       \Omega^{-n_1-\dots-n_d} &    0   & 0 & \ldots & 0\\
      \Omega^{-n_1-\dots-n_{d-1}}\LLii_{n_d}^{\star,o}(-Z_d) &  \Omega^{-n_1-\dots-n_{d-1}} & 0 & \ldots &\vdots \\
       \vdots & \Omega^{-n_1-\dots-n_{d-2}}\LLii_{n_{d-1}}^{\star,o}(-Z_{d-1})   &  & & \vdots \\
        \vdots  &  \vdots & \ddots &0 &  \vdots\\
       \vdots    & \vdots   & \ddots  & \Omega^{-n_1} & 0 \\
      \LLii_{n_d,\dots,n_1}^{\star,o}(-Z_d,\dots,-Z_1)  & 
      \LLii_{n_{d-1},\dots,n_{1}}^{\star,o}(-Z_{d-1},\dots,-Z_{1})  &    \dots & \LLii_{n_1}^{\star,o}(-Z_1)  & 1
    \end{array}
  \right).
%}
\end{equation}
Here we note that  
$ \LLii_{n_d,\dots,n_1}^{\star,o}(-Z_d,\dots,-Z_1)=
(-1)^d \LLii_{n_d,\dots,n_1}^{\star,o}(Z_d,\dots,Z_1)$
by linearlity.
We consider the matrix $\Phi\in \Mat_{d+1}(\TT)$ given by 
\begin{equation}\label{eq:Big Phi matrix}
\Phi=
\left(
    \begin{array}{ccccc}
%       0      &   0   & 0 &\ldots  &0 \\
       (t-\theta)^{n_1+\dots+n_d} &    0   & 0 & \ldots & 0\\
       Z_d^{(-1)}(t-\theta)^{n_1+\dots+n_d} & (t-\theta)^{n_1+\dots+n_{d-1}} & 0 & \ldots& \vdots \\
       0 & Z_{d-1}^{(-1)}(t-\theta)^{n_1+\dots+n_{d-1}}   &  & & \vdots \\
       0   &  0 & \ddots &0 &  \vdots\\
       \vdots    & \vdots   & \ddots  & (t-\theta)^{n_1} & 0 \\
       0 &  \dots &    0 &  Z_1^{(-1)}(t-\theta)^{n_1}  & 1
    \end{array}
  \right).
\end{equation}
%in $\Mat_{d+1}(\TT)$ 
Then by Proposition \ref{lem:property of multiple LLii}
we have
$$\Psi=\Phi^{(1)}\Psi^{(1)}.$$ 
%, that is,
%$$\Psi^{(-1)}=\Phi\Psi.$$
While by Proposition \ref{lem:property of multiple LLii star},
we also have 
$$\Psi_\star^{(1)}=\Psi_\star\Phi^{(1)}.$$ %, that is,
%$$\Psi_\star^{(-1)}=\Psi_\star\Phi^{-1}.$$
Therefore
$$
(\Psi_\star\Psi)^{(1)}=
\Psi_\star^{(1)}\Psi^{(1)}=
\Psi_\star\Phi^{(1)}\Psi^{(1)}=
\Psi_\star\Psi.
$$
Thus
$
\Psi_\star\Psi\in\GL_{d+1}(\F_q[[t]]).
$
By calculating its $(d+1,1)$-component, we obtain the claim. 
\end{proof}

Chang-Mishiba functional relation (\cite[Lemma 4.2.1]{CM}) is 
\begin{align*}
\Li_{n_1,\dots,n_d}(z_1,\dots,z_d) &
=\sum_{i=2}^d (-1)^i\Li_{n_{i-1},\dots,n_1}^\star(z_{i-1},\dots,z_1)
\Li_{n_i,\dots,n_d}(z_i,\dots,z_d) \\
&+(-1)^{d+1}\Li_{n_d,\dots,n_1}^\star(z_d,\dots,z_1)
\end{align*}
for  $n_1,\dots,n_d\in\N$ and
$z_1,\dots,z_d\in\C_\infty$ belonging to all the regions of convergence of
 all terms.
It is extended to all branches as  follows:

\begin{cor}
Let $n_1,\dots,n_d\in\N$ and $z_1,\dots,z_d\in\C_\infty$.
For any branch 
$\vec\Li_{n_1,\dots,n_d}^{o}(z_1,\dots,z_d)$ %\in \C_\infty^d$ %\Mat_{1,d}(\TT) $
and
$\vec\Li_{n_d,\dots,n_1}^{\star,o}(z_d,\dots,z_1)\in \C_\infty^d$,
%\in\Mat_{d,1}(\TT)$,
we have
$$
\left(
\begin{array}{c}
\vec\Li_{n_d,\dots,n_1}^{\star,o}(-z_d,\dots,-z_1) \\
\tilde\pi^{n_1+\cdots+n_d}     
\end{array}
\right)^\T
\cdot
\left(
\begin{array}{c}
\tilde\pi^{n_1+\cdots+n_d}      \\
\vec\Li_{n_1,\dots,n_d}^{o}(z_1,\dots,z_d)  
\end{array}
\right)
\equiv 0 \
\bmod \tilde\pi^{2(n_1+\cdots+n_d)}A.
$$
\end{cor}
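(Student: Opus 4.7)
The plan is to derive the corollary by specializing the preceding $t$-motivic orthogonality at $t=\theta$ with the choice $Z_i = z_i \in \C_\infty \subset \TT$. Setting $N := n_1+\cdots+n_d$, the theorem asserts that the inner product on its left equals $\Omega^{-2N} g$ for some $g \in \F_q[[t]]$.

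First I would verify that this $g$ in fact lies in $\F_q[t]$. Since by construction both branches $\vec\LLii^{o}_{n_1,\ldots,n_d}(z_1,\ldots,z_d)$ and $\vec\LLii^{\star,o}_{n_d,\ldots,n_1}(-z_d,\ldots,-z_1)$ take values in $\TT^d$, and $\Omega^{\pm 1} \in \TT$ as $\Omega \in \TT^\times$, the whole inner product lies in $\TT$. Thus $g = \Omega^{2N}\cdot(\text{inner product}) \in \TT \cap \F_q[[t]] = \F_q[t]$, because the only $\F_q$-coefficient power series lying in $\TT$ are the polynomials (any nonzero $\F_q$-coefficient has norm $1$ and cannot tend to $0$).

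Next I would evaluate at $t=\theta$. Using $\Omega(\theta)^{-1}=\tilde\pi$, the factor $\Omega^{-2N}$ specializes to $\tilde\pi^{2N}$, and by the evaluation-step definitions in \S\ref{sec:prolong MPL} and \S\ref{sec:prolong MSPL} the entries of $\vec\LLii^{o}(z_1,\ldots,z_d)$ and $\vec\LLii^{\star,o}(-z_d,\ldots,-z_1)$ specialize termwise at $t=\theta$ to those of $\vec\Li^{o}(z_1,\ldots,z_d)$ and $\vec\Li^{\star,o}(-z_d,\ldots,-z_1)$, with each intermediate factor $\Omega^{-(n_1+\cdots+n_k)}$ turning into $\tilde\pi^{n_1+\cdots+n_k}$ and each $\LLii^{(\star,)o}_{n_1,\ldots,n_k}(z_1,\ldots,z_k)$ turning into $\Li^{(\star,)o}_{n_1,\ldots,n_k}(z_1,\ldots,z_k)$. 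Consequently the left-hand side of the corollary equals $\tilde\pi^{2N} g(\theta)$, and since $g(\theta) \in \F_q[\theta] = A$, this is $\equiv 0$ modulo $\tilde\pi^{2N}A$.

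The main obstacle is really the first step, namely pinning down $g \in \F_q[t]$; once this is in hand, the remaining argument is a termwise bookkeeping that matches $\Omega$-powers with $\tilde\pi$-powers under evaluation. Analytic legitimacy of the specialization itself is immediate since $\Omega$ does not vanish at $t=\theta$ and the quantities $\Omega^{n_1+\cdots+n_k}\LLii^{(\star,)o}$ are entire by Propositions \ref{lem:property of multiple LLii} and \ref{lem:property of multiple LLii star}.
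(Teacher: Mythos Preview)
Your argument is correct and follows essentially the same route as the paper's: specialize the $t$-motivic orthogonality at $t=\theta$ after upgrading the congruence from $\F_q[[t]]$ to $\F_q[t]$. Your observation that $\TT\cap\F_q[[t]]=\F_q[t]$ gives a slightly cleaner justification of this upgrade than the paper, which instead invokes Lemma~\ref{lem:AS}(2) together with the difference equations in Propositions~\ref{lem:property of multiple LLii}(3) and~\ref{lem:property of multiple LLii star}(3) to establish convergence at $t=\theta$ and then uses that to force polynomiality; but both need that convergence for the evaluation step anyway, so the difference is only organizational. One small caveat: for arbitrary $z_i\in\C_\infty$ the products $\Omega^{n_i}z_i$ need not lie in $\E$ in the strict sense (coefficients in $\bar K$), so your appeal to ``entireness'' should be read as the $\TT(\infty)$ case of those propositions, which is exactly what you need.
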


\begin{proof}
By Lemma \ref{lem:AS}.(2), 
Proposition \ref{lem:property of multiple LLii}.(3),
Proposition \ref{lem:property of multiple LLii star}.(3) 
and $\Omega(\theta)\neq 0$,
$t=\theta$ is inside the regions of convergence of 
$\LLii_{n_i,\dots,n_j}^{\star,o}(z_i,\dots,z_j)$ and
$\LLii_{n_j,\dots,n_i}^{\star,o}(-z_j,\dots,-z_i)$
($1\leqslant i\leqslant j \leqslant d$).
So we have
$
\Psi_\star\Psi\in\GL_{d+1}(\F_q[t]),
$
which implies
$$
\left(
\begin{array}{c}
\vec\LLii_{n_d,\dots,n_1}^{\star,o}(-z_d,\dots,-z_1) \\
\Omega^{-n_1-\cdots-n_d}     
\end{array}
\right)^\T
\cdot
\left(
\begin{array}{c}
\Omega^{-n_1-\cdots-n_d}      \\
\vec\LLii_{n_1,\dots,n_d}^{o}(z_1,\dots,z_d)  
\end{array}
\right)
\in %\equiv 0 \\bmod 
\Omega^{-2(n_1+\cdots+n_d)}\cdot\F_q[t].
$$
By evaluating $t=\theta$, we obtain the claim.
\end{proof}

%%%%%%%%%%%%%%%%%%%%%%%%%%%%%%%%%%%%%%%%%%%%%%%%%%%%%%%%%%%%%%%%%%%%%%%
%\subsection{Log-algebraic identity}
%%%%%%%%%%%%%%%%%%%%%%%%%%%%%%%%%%%%%%%%%%%%%%%%%%%%%%%%%%%%%%%%%%%%%%
%\subsection{Chang-Papanikolas-Yu's theorem revisited}
\subsection{Eulerian property}\label{sec:CPY revisited}

We discuss Eulerian properties of the special values of 
multiple polylogarithm at algebraic points.
%In Theorem \ref{thm:extended CPY},
%We show that  
%if a branch of Carlitz MPL
%$\Li_{n_1,\dots, n_d}^o(z_1,\dots,z_d)$
%holds Eulerian property
%then all the branch does so.
We show that Eulerian property for CMPL and CMSPL
is independent of any choice of branches.
%holds once for all branches.
%The idea is to extend the result of \cite[Theorem 4.3.2]{CPY}
%by expelling their convergence condition.

\begin{defn}
Let $n_1,\dots, n_d\in\N$ and $z_1,\dots,z_d\in \C_\infty$, 
Put  $\Li_{n_1,\dots, n_d}^o(z_1,\dots,z_d)\in\C_\infty$
be an branch, that is, the last coordinate of an appropriate branch
$\vec\Li_{n_1,\dots, n_d}^o(z_1,\dots,z_d)\in\C_\infty^d$.
It is called {\it Eulerian} when
$\Li_{n_1,\dots, n_d}^o(z_1,\dots,z_d)/ \tilde\pi^{n_1+\cdots+n_d}\in K$.
We may say the same thing for 
$\Li_{n_1,\dots, n_d}^{\star,o}(z_1,\dots,z_d)$.
\end{defn}

\begin{thm}\label{thm:extended CPY}
Put $n_1,\dots, n_d\in\N$ and $z_1,\dots,z_d\in \bar K$. 
Let $\vec\LLii_{n_i,\dots, n_d}^o(z_1,\dots,z_d)$
be an branch
%system of branches satisfying 
%in Proposition \ref{lem:property of multiple LLii}.(2) for $Z_i=z_i$.
with coordinates satisfying
%Assume that
\begin{equation}\label{eq:assume}
\Li_{n_i,\dots, n_d}^o(z_i,\dots,z_d)
:=\LLii_{n_i,\dots, n_d}^o(z_i,\dots,z_d)(\theta)\neq 0
\end{equation}
for all $i=1,2,\dots,d$.
%Let $\Li_{n_1,\dots, n_d}^o(z_1,\dots,z_d)\in\C_\infty$ be a branch.
%of $\Li_{n_1,\dots, n_d}(z_1,\dots,z_d)\in\C_\infty/\tilde \pi^{n_1+\cdots+n_d}$.
If $\Li_{n_1,\dots, n_d}^o(z_1,\dots,z_d)$ %/ \tilde\pi^{n_1+\cdots+n_d}$ 
is Eulerian,
% i.e. $\Li_{n_1,\dots, n_d}^o(z_1,\dots,z_d)/ \tilde\pi^{n_1+\cdots+n_d}\in K$ ,
%then any  other branch of 
%$\Li_{n_1,\dots, n_i}^o(z_1,\dots,z_i)$ is so  for all $i=1,2,\dots,d$.
%then its 
then so is any other branch 
$\Li_{n_1,\dots, n_d}^{o'}(z_1,\dots,z_d)$.
%is so. %also Eulerian.
\end{thm}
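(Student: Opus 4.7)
The plan is to reduce the theorem, via the branch-difference formula of Remark \ref{rem:other branch of CMPL}, to a descent statement for Eulerianness along head sub-CMPLs, and then to invoke a Chang-Papanikolas-Yu-type criterion (whose non-vanishing hypothesis appears explicitly in \eqref{eq:assume}) to settle the descent.

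First, I would note that by Remark \ref{rem:other branch of CMPL}, any two branches of $\Li_{n_1,\dots,n_d}(z_1,\dots,z_d)$ differ by
$$
\Li^{o'}_{n_1,\dots,n_d}(z_1,\dots,z_{d})-
\Li^{o}_{n_1,\dots,n_d}(z_1,\dots,z_{d})
=\sum_{i=0}^{d-1}\alpha_i\cdot\tilde\pi^{n_{i+1}+\cdots+n_d}\Li^{o}_{n_1,\dots,n_i}(z_1,\dots,z_i)
$$
with $\alpha_i\in A$. Dividing through by $\tilde\pi^{n_1+\cdots+n_d}$, Eulerianness of the second branch is equivalent to
$$
\frac{\Li^{o}_{n_1,\dots,n_d}(z_1,\dots,z_d)}{\tilde\pi^{n_1+\cdots+n_d}}
+\sum_{i=0}^{d-1}\alpha_i\cdot\frac{\Li^{o}_{n_1,\dots,n_i}(z_1,\dots,z_i)}{\tilde\pi^{n_1+\cdots+n_i}}\in K.
$$
The first summand lies in $K$ by assumption, and since $\alpha_i\in A\subset K$, the whole theorem reduces to the following \emph{descent claim}: whenever $\Li^{o}_{n_1,\dots,n_d}(z_1,\dots,z_d)$ is Eulerian and the non-vanishing hypothesis \eqref{eq:assume} holds, each shorter head value $\Li^{o}_{n_1,\dots,n_i}(z_1,\dots,z_i)$ is Eulerian for every $1\leqslant i\leqslant d-1$.

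Second, I would establish the descent claim by downward induction on $i$, starting from $i=d$ (the given hypothesis). The bridge between index $i$ and index $i-1$ is provided by the orthogonality relation Corollary of \S \ref{subsec:orthogonality}, which couples the head CMPL at length $i$ to the star-tail CMPL at length $i$ modulo $\tilde\pi^{2(n_1+\cdots+n_i)}A$. The non-vanishing assumption \eqref{eq:assume} guarantees that every tail value $\Li^o_{n_j,\dots,n_d}(z_j,\dots,z_d)$ entering these relations is a unit in $\C_\infty^\times$, so the orthogonality identity can be solved for the head value. At this point the Chang-Papanikolas-Yu criterion (\cite{CPY}) for Eulerianness, rephrased in terms of the $t$-motivic vector $\vec\LLii^{o}_{n_1,\dots,n_d}(z_1,\dots,z_d)$ constructed in \S \ref{sec:prolong MPL}, converts the motivic condition of being Eulerian at length $d$ into the same condition at length $d-1$, because the relevant extension of dual $t$-motives splits along the filtration provided by the lower-triangular matrix \eqref{eq:Big Psi matrix}.

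The main obstacle is this descent step: one must argue, using the triangular structure of $\Psi$ and the non-vanishing hypothesis, that Eulerianness of the bottom-row entry forces Eulerianness of all entries directly above it on the last column. The difficulty is that the CPY criterion as originally stated applies to a single algebraic point, whereas here we need it for each stage of the filtration; the non-vanishing condition \eqref{eq:assume} is exactly what rules out the degenerate cases where the extension class could be nontrivial while the special value looks Eulerian.  Once the descent is in hand, plugging back into the branch-difference formula closes the proof.
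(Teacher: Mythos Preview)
Your reduction via Remark \ref{rem:other branch of CMPL} is correct and is exactly how the paper finishes: once one knows that every head value $\Li_{n_1,\dots,n_i}^o(z_1,\dots,z_i)/\tilde\pi^{n_1+\cdots+n_i}$ lies in $K$ for $0\leqslant i\leqslant d-1$, the branch-difference formula immediately yields Eulerianness of any other branch.

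The gap is in your descent step. The orthogonality Corollary of \S\ref{subsec:orthogonality} does not do what you claim: expanding the scalar product shows that it pairs \emph{head star} values $\Li_{n_{i},\dots,n_1}^{\star,o}(z_{i},\dots,z_1)$ against \emph{tail non-star} values $\Li_{n_{i+1},\dots,n_d}^{o}(z_{i+1},\dots,z_d)$, and the relation is only a single congruence modulo $\tilde\pi^{2N}A$. It gives you no handle on the head non-star values $\Li_{n_1,\dots,n_i}^o(z_1,\dots,z_i)$ that you actually need, and solving for one term requires knowing Eulerianness of the others, which you do not have. The non-vanishing hypothesis \eqref{eq:assume} concerns the tail non-star values, but that alone does not let you peel off the head values one by one.

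Moreover, you cannot simply cite \cite{CPY} as a black box: their result is stated for points in the convergence region $\mathbb D$ (see the Remark following the theorem), whereas here the $z_i\in\bar K$ may lie outside $\mathbb D$ and the values are branch values. What the paper actually does---and what your phrase ``rephrased in terms of the $t$-motivic vector'' hides---is to \emph{rerun} the entire CPY argument at branch level: it builds the vector $\psi$ and the matrix $\Psi$ from the branch data, checks via Proposition \ref{lem:property of multiple LLii} that $\psi\in\E^{d+2}$ so that the ABP criterion \cite{ABP} applies, obtains $(f_0,\dots,f_{d+1})\in\bar K[t]^{d+2}$ with $(f_0,\dots,f_{d+1})\psi=0$, and then uses the difference equation together with \cite[\S4.1.6]{P} to produce $\nu_1,\dots,\nu_d\in\F_q(t)$ with $\nu_i(\theta)=\tilde\pi^{-n_1-\cdots-n_i}\Li_{n_1,\dots,n_i}^o(z_1,\dots,z_i)$. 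The non-vanishing hypothesis \eqref{eq:assume} is used, via Proposition \ref{lem:property of multiple LLii}(4) and $\Omega(\theta^{q^n})=0$, to force the auxiliary coefficients $B_0,\dots,B_d$ to vanish recursively; there is no downward induction on depth and no appeal to orthogonality. Your proposal identifies the correct target (Eulerianness of all heads) but does not supply a mechanism that reaches it.
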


%\Add{Where do we use $z_1,\dots,z_d \neq 0$?}
\begin{proof}
Though the proof goes in the same way as that of \cite[Theorem 2.5.2]{CPY},
we repeat here for the proof of Theorem \ref{thm:extended CPY}.
%We fix the branch 
%$\LLii_{n_1,\dots, n_d}^o(z_1,\dots,z_d)\in\TT$
%which yields 
%$\Li_{n_1,\dots, n_d}^o(z_1,\dots,z_d)\in\C_\infty$
%and also take the branch 
%$\LLii_{n_i,\dots, n_d}^o(z_i,\dots,z_d)\in\TT$
%%of $\vec\Li_{n_i,\dots, n_d}(z_i,\dots,z_d)$
%by keeping \eqref{eq:func eq for Li00} and \eqref{eq:den multiple LLii}.
%Put $w=n_1+\cdots+n_d$.
%As in the proof of  \cite[Theorem 4.3.2]{CPY},
We consider the matrix
$\Phi\in\mathrm{Mat}_{d+1}(\bar K[t])$ 
given in \eqref{eq:Big Phi matrix}
with $Z_i=z_i\in\bar K$
%$$
%\Phi=
%\left(
%    \begin{array}{ccccc}
%%       0      &   0   & 0 &\ldots  &0 \\
%       (t-\theta)^{n_1+\dots+n_d} &    0   & 0 & \ldots & 0\\
%       z_d^{(-1)}(t-\theta)^{n_1+\dots+n_d} & (t-\theta)^{n_1+\dots+n_{d-1}} & 0 & \ldots& \vdots \\
%       0 & z_{d-1}^{(-1)}(t-\theta)^{n_1+\dots+n_{d-1}}   &  & & \vdots \\
%       0   &  0 & \ddots &0 &  \vdots\\
%       \vdots    & \vdots   & \ddots  & (t-\theta)^{n_1} & 0 \\
%       0 &  \dots &    0 &  z_1^{(-1)}(t-\theta)^{n_1}  & 1
%    \end{array}
%  \right)
%$$
%in $\mathrm{Mat}_{d+1}(\bar K[t])$
and the vector
\begin{align*}
\psi&=(1,\Omega^{n_1+\cdots+n_d},\Omega^{n_1+\cdots+n_d}\LLii_{n_d}^o(z_d),\dots, 
\Omega^{n_1+\cdots+n_d}\LLii_{n_2,\dots,n_d}^o(z_2,\dots,z_d), \\
&\qquad\qquad\qquad\qquad\qquad\qquad\qquad\qquad\qquad\qquad
\Omega^{n_1+\cdots+n_d}\LLii_{n_1,\dots,n_d}^o(z_1,\dots,z_d)
)^\T%\in\TT^{d+2}.
\\
&=\Bigl(1,
\Omega^{n_1+\cdots+n_{d}},
\Omega^{n_1+\cdots+n_{d-1}} \LLii_{\{0\}^{1}}^o(\Omega^{n_d}z_d),
\dots, 
\Omega^{n_1}\LLii_{\{0\}^{d-1}}^o(\Omega^{n_2}z_2,\dots, \Omega^{n_d}z_d), \\
&\qquad\qquad\qquad\qquad\qquad\qquad\qquad\qquad\qquad\qquad\qquad
\LLii_{\{0\}^{d}}^o(\Omega^{n_1}z_1,\dots, \Omega^{n_d}z_d)
\Bigr)^\T
\end{align*}
in $\mathrm{Mat}_{d+2,1}(\TT)$.
We have the difference equation
\begin{equation}\label{difference equation}
\psi^{(-1)}=
\left(
    \begin{array}{cc}
         1 &   \\ 
          & \Phi
    \end{array}
\right)
\psi.
\end{equation}
Proposition \ref{lem:property of multiple LLii} assures that $\psi$ is in $\mathrm{Mat}_{d+2,1}(\E)$
for $z_1,\dots, z_d\in\bar K$.
By the ABP criteria (\cite[Theorem 3.1.1]{ABP}),
there exists
$(f_0,\dots,f_{d+1})\in\mathrm{Mat}_{1,d+2}(\bar K[t])$
such that 
\begin{equation*}
(f_0,\dots,f_{d+1})
%{\bf f}
\psi=0
\end{equation*}
and
whose specialization at $t=\theta$ yields 
Eulerian property of
$\Li_{n_1,\dots,n_d}^o(z_1,\dots,z_d)$.
Particularly we have $f_{0}(\theta)\neq 0$,   $f_{d+1}(\theta)\neq 0$
and
$f_1(\theta)=\dots=f_{d}(\theta)=0$.
%We have
%$$
%f_{0}+f_1\Omega^w+f_2\Omega^w\LLii_{n_d}^o(z_d)+\cdots
%%+f_{d-1}\Omega^w\LLii_{n_2,\dots,n_d}^o(z_2,\dots,z_d)
%+f_{d+1}\Omega^w\LLii_{n_1,\dots,n_d}^o(z_1,\dots,z_d)=0.
%$$
Put %$\tilde{\bf f}:=f_{d+1}^{-1}{\bf f}$ and 
$$
(B_0,B_1,\dots,B_d,0):=
\left(\frac{f_0}{f_{d+1}}, \dots, \frac{f_d}{f_{d+1}},1\right) %f_{d+1}^{-1}{\bf f}
-\left(\frac{f_0}{f_{d+1}}, \dots, \frac{f_d}{f_{d+1}},1\right)^{(-1)} 
%-(f_{d+1}^{-1})^{(-1)}{\bf f}^{(-1)}
\cdot \left(
    \begin{array}{cc}
         1 &   \\ 
          & \Phi
    \end{array}
\right)
$$
in $\mathrm{Mat}_{1,d+2}(\bar K(t)).$
%Each component is calculated to be
%\begin{align*}
%B_0 &=\frac{f_0}{f_{d+1}}-(\frac{f_0}{f_{d+1}})^{(-1)}, \\
%B_1 &=\frac{f_1}{f_{d+1}}-(\frac{f_1}{f_{d+1}})^{(-1)} (t-\theta)^{n_1+\cdots+n_d} -(\frac{f_2}{f_{d+1}})^{(-1)}z_d^{(-1)} (t-\theta)^{n_1+\cdots+n_d},\\
%B_2 &=\frac{f_2}{f_{d+1}}-(\frac{f_2}{f_{d+1}})^{(-1)} (t-\theta)^{n_1+\cdots+n_{d-1}} -(\frac{f_3}{f_{d+1}})^{(-1)}z_{d-1}^{(-1)} (t-\theta)^{n_1+\cdots+n_{d-1}},\\
% & \vdots \\
%B_d &=\frac{f_d}{f_{d+1}}-(\frac{f_d}{f_{d+1}})^{(-1)} (t-\theta)^{n_1} -z_{1}^{(-1)} (t-\theta)^{n_1}.
%\end{align*}

The equation $
(f_0,\dots,f_{d+1})
%{\bf f}
\psi=0$ implies
$$
(B_0,B_1,\dots,B_d,0)\psi=0,
$$
that means
\begin{align*}
B_0+
B_1\Omega^{n_1+\cdots+n_{d}} +
& B_2\Omega^{n_1+\cdots+n_{d-1}} \LLii_{\{0\}^{1}}^o(\Omega^{n_d}z_d)+
\cdots  \\
&\qquad\qquad
+B_{d}\Omega^{n_1}\LLii_{\{0\}^{d-1}}^o(\Omega^{n_2}z_2,\dots, \Omega^{n_d}z_d)=0.
%+B_d	\LLii_{\{0\}^{d}}^o(\Omega^{n_1}z_1,\dots, \Omega^{n_d}z_d)=0.
\end{align*}%Hence
%\begin{align*}
%B_0+B_1\Omega^{n_1+\cdots+n_{d}}+ B_2  &
%\Omega^{n_1+\cdots+n_{d-1}} \LLii_{\{0\}^{1}}^o(\Omega^{n_d}z_d)
%+ \cdots
%\\
%&+B_d\Omega^{n_1}
%\LLii_{\{0\}^{d-1}}^o(\Omega^{n_2}z_2,\dots, \Omega^{n_d}z_d)
%=0.
%\end{align*}
While we have
$$
\LLii_{\{0\}^{d+1-i}}^o(\Omega^{n_i}z_i,\dots, \Omega^{n_d}z_d)(\theta^{q^n})\neq 0
$$
for all $i=1,2,\dots,d$ and $n\geqslant 1$
by our assumption \eqref{eq:assume}, Proposition \ref{lem:property of multiple LLii}.(4) and $\Omega(\theta)\neq 0$.
By combining it with $\Omega(\theta^{q^n})=0$ for all $n$
%we obtain $B_i(\theta^{q_n})
and
$B_i\in\bar K(t)$,
we recursively obtain
$$
B_0=B_1=\dots=B_d=0.
$$
Put 
$D=\left(
    \begin{array}{cccc}
         1 &    &  & \\ 
            & 1  & & \\
            && \ddots & \\
         \delta_1 & \dots & \delta_d & 1 
    \end{array}
    \right)
$ in $\mathrm{Mat}_{1,d+1}(\bar K(t))$
with $\delta_i=\frac{f_i}{f_{d+1}}\in \bar K(t)$ ($i=1,\dots,d$). 
Then we have
$$
D^{(-1)}\Phi =
\left(
    \begin{array}{cc}
         \Phi' &   \\ 
          & 1
    \end{array}
\right)D
$$
where $\Phi'$ is the upper  left $d\times d$-part of $\Phi$.

Hereafter we fix $d$ generators 
$
%\Omega^{-n_{k+1}-\cdots-n_d}\cdot
\vec\LLii_{n_1,\dots,n_d}^o(z_1,\dots, z_k)
%:=
%\Omega^{-n_1-\cdots-n_d}\cdot
%\vec\LLii_{\{0\}^{d}}^o(\Omega^{n_1}Z_1,\dots, \Omega^{n_k}Z_k)
$
($0\leqslant k\leqslant d-1$) of
$\M_{n_1,\dots,n_d}^{z_1,\dots,z_{d-1}}$
as in Definition \ref{def:multiple LLii}.
%whose coordinates are given by
%\begin{align*}
%&\vec\LLii_{n_1,\dots,n_k}^o(Z_1,\dots,Z_{k})= 
%(\{0\}^{d-k-1},
%\Omega^{-n_1-\dots-n_{k}}, \\
%&\Omega^{-n_1-\dots-n_{k-1}}\LLii_{n_k}^o(Z_k),
%\Omega^{-n_1-\dots-n_{k-2}}\LLii_{n_{k-1},n_k}^o(Z_{k-1},Z_k),
%\dots,
%\LLii_{n_1,\dots,n_k}^o (Z_1,\dots,Z_k))
%\in\TT^d.
%\end{align*}
By using their coordinates, we define the matrix  
$\Psi\in\mathrm{Mat}_{d+1}(\E)\cap \GL_{d+1}(\TT)$ 
given in \eqref{eq:Big Psi matrix}
with $Z_i=z_i\in\bar K$.
%$$
%{\footnotesize
%%\Psi=
%\left(
%    \begin{array}{ccccc}
%%       0      &   0   & 0 &\ldots  &0 \\
%       \Omega^{n_1+\dots+n_d} &    0   & 0 & \ldots & 0\\
%      \Omega^{n_1+\dots+n_d}\LLii_{n_d}^o(z_d) &  \Omega^{n_1+\dots+n_{d-1}} & 0 & \ldots &\vdots \\
%       \vdots & \Omega^{n_1+\dots+n_{d-1}}\LLii_{n_{d-1}}^o(z_{d-1})   &  & & \vdots \\
%        \vdots  &  \vdots & \ddots &0 &  \vdots\\
%       \vdots    & \vdots   & \ddots  & \Omega^{n_1} & 0 \\
%      \Omega^{n_1+\dots+n_d}\LLii_{n_1,\dots,n_d}^o(z_1,\dots,z_d)  & 
%      \Omega^{n_1+\dots+n_{d-1}}\LLii_{n_1,\dots,n_{d-1}}^o(z_1,\dots,z_{d-1})  &    \dots &  \Omega^{n_1}\LLii_{n_1}^o(z_1)  & 1
%    \end{array}
%  \right).
%  }
%$$
%in $\mathrm{Mat}_{d+1}(\E)\cap \GL_{d+1}(\TT)$.
It satisfies
$$
\Psi^{(-1)}=\Phi\Psi.
$$
Thus we have
$$
(D\Psi)^{(-1)}=
\left(
    \begin{array}{cc}
         \Phi' &   \\ 
          & 1
    \end{array}
\right)
D\Psi.
$$

While we have the difference equation
$
\left(
    \begin{array}{cc}
         \Psi' &   \\ 
          & 1
    \end{array}
\right)^{(-1)}
=
\left(
    \begin{array}{cc}
         \Phi' &   \\ 
          & 1
    \end{array}
\right)
\left(
    \begin{array}{cc}
         \Psi' &   \\ 
          & 1
    \end{array}
\right)
$
%also satisfies the same differential equation
where 
$\Psi'$ means the upper  left $d\times d$-part  of $\Psi$
since $\Phi$ is a lower triangular matrix.
Then by \cite[\S 4.1.6]{P} there exist $\nu_1,\dots,\nu_d\in\F_q(t)$
such that 
$$
D\Psi=\left(
    \begin{array}{cc}
         \Psi' &   \\ 
          & 1
    \end{array}
\right)
\left(
    \begin{array}{cccc}
         1 &    &  & \\ 
            & 1  & & \\
            && \ddots & \\
         \nu_1 & \dots & \nu_d & 1 
    \end{array}
    \right).
$$
The equation implies  on the last row
\begin{align*}
\nu_1=&\delta_1\Omega^{n_1+\dots+n_d}+\delta_2\Omega^{n_1+\dots+n_{d-1}}
\LLii_{\{0\}^{1}}^o(\Omega^{n_d}z_d)+\cdots \\
&\qquad
+\delta_d\Omega^{n_1}\LLii_{\{0\}^{d-1}}^o(\Omega^{n_2}z_2,\dots, \Omega^{n_d}z_d) 
+\LLii_{\{0\}^{d}}^o(\Omega^{n_1}z_1,\dots, \Omega^{n_d}z_d), \\
\nu_2=&\delta_2\Omega^{n_1+\dots+n_{d-1}}+\delta_3\Omega^{n_1+\dots+n_{d-2}}
\LLii_{\{0\}^{1}}^o(\Omega^{n_{d-1}}z_{d-1})+\cdots \\
& \qquad
+\delta_d\Omega^{n_1}\LLii_{\{0\}^{d-2}}^o(\Omega^{n_2}z_2,\dots,\Omega^{n_{d-1}}z_{d-1}) 
+\LLii_{\{0\}^{d-1}}^o(\Omega^{n_1}z_1,\dots, \Omega^{n_{d-1}}z_{d-1}) ,
\\
& \qquad \vdots \\
\nu_d=& \delta_{d}\Omega^{n_1}+\LLii_{\{0\}^{1}}^o(\Omega^{n_1}z_1). 
\end{align*}
By $\nu_i\in\F_q(t)$, we have $\nu_i(\theta^{q^n})=\nu_i(\theta)^{q^n}$
for all $i=1,\dots,d$ and $n\geqslant 0$.
By Proposition \ref{lem:property of multiple LLii}.(4) and
$\Omega(\theta^{q^n})=0$ for $n\geqslant 1$,
we obtain
$$
\nu_i(\theta)^{q^n}=
\LLii_{\{0\}^{i}}^o(\Omega^{n_1}z_1,\dots, \Omega^{n_i}z_i)(\theta)^{q^n}
$$
for infinitely many $n$.
By taking $q^n$-th root of both hand sides, we see that
$\nu_i(\theta)=\LLii_{\{0\}^{i}}^o(\Omega^{n_1}z_1,\dots, \Omega^{n_i}z_i)(\theta)
=\tilde\pi^{-n_1-\cdots-n_i}\Li_{n_1,\dots,n_i}^o(z_1,\dots,z_i)$
is in $K$.

By Remark \ref{rem:other branch of CMPL},
%Since %$\tilde\pi^{-n_1-\cdots-n_i}
%$\vec\Li_{n_1,\dots,n_d}^o(z_1,\dots,z_i)$
%($1\leqslant i \leqslant d$)
%are generators of the $A$-module $M_{n_1,\dots,n_d}^{z_1,\dots,z_{d-1}}$,
any other branch 
$\Li_{n_1,\dots, n_d}^{o'}(z_1,\dots,z_d)$
is given by the form
$$
\Li_{n_1,\dots, n_d}^{o}(z_1,\dots,z_d)+\sum_{i=0}^{d-1}
\alpha_i\cdot\tilde\pi^{n_{i+1}+\cdots+n_d}\Li_{n_1,\dots,n_i}^o(z_1,\dots,z_i)
$$
with $\alpha_i\in A$.
Whence we get that $\Li_{n_1,\dots, n_d}^{o'}(z_1,\dots,z_d)$ is also Eulerian.
\end{proof}

%We note that the above theorem holds without the assumption \eqref{eq:assume} when $d=1$.

\begin{rem}
In %the statement of 
\cite[Theorem 4.3.2]{CPY}, 
$z_1,\dots,z_d$ are assumed to be in $\bar K^\times\cap \mathbb D$.
And %in %the statement of 
\cite[Theorem 2.5.2]{CPY}
is shown for  $z_1,\dots,z_d\in\bar K[t]$
satisfying \eqref{eq:condition for Z} and \eqref{eq:assume}
for $(n_i,\dots,n_j)$ with $1\leqslant i\leqslant j\leqslant d$.
\end{rem}

The branch independency also holds for the star version.
%We also have a star variant of Theorem \ref{thm:extended CPY}.

\begin{thm}\label{thm:extended CPY star}
Put $n_1,\dots, n_d\in\N$ and $z_1,\dots,z_d\in \bar K$. 
Let $\vec\LLii_{n_i,\dots, n_d}^{\star,o}(z_1,\dots,z_d)$
be a branch
with coordinates satisfying
\begin{equation}\label{eq:assume star}
\Li_{n_i,\dots, n_d}^{\star,o}(z_i,\dots,z_d)
:=\LLii_{n_i,\dots, n_d}^{\star,o}(z_i,\dots,z_d)(\theta)\neq 0.
\end{equation}
If $\Li_{n_1,\dots, n_d}^{\star,o}(z_1,\dots,z_d)$ %/ \tilde\pi^{n_1+\cdots+n_d}$ 
is Eulerian,
then so is any other branch
$\Li_{n_1,\dots, n_d}^{\star,o'}(z_1,\dots,z_d)$.
\end{thm}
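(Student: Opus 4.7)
The plan is to mirror the proof of Theorem~\ref{thm:extended CPY} with the CMSPL matrices in place of the CMPL ones. I would specialise the matrix $\Psi_\star$ of~\eqref{eq:Big Psi star matrix} to $Z_i=z_i\in\bar K$ and take $\psi_\star\in\Mat_{d+2,1}(\E)$ to be the column vector obtained by augmenting the last column of $\Psi_\star^{-1}$ with a leading $1$. The orthogonality identity $\Psi_\star^{(1)}=\Psi_\star\Phi^{(1)}$ inverts to $(\Psi_\star^{-1})^{(-1)}=\Phi\,\Psi_\star^{-1}$, so $\psi_\star$ satisfies a Frobenius difference equation governed by the very same lower-triangular matrix $\Phi\in\Mat_{d+1}(\bar K[t])$ of~\eqref{eq:Big Phi matrix} that appears in the non-star proof; entireness of $\psi_\star$ follows from Proposition~\ref{lem:property of multiple LLii star}.(2).

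With this setup the ABP criterion will produce $(f_0,\dots,f_{d+1})\in\Mat_{1,d+2}(\bar K[t])$ with $(f_0,\dots,f_{d+1})\cdot\psi_\star=0$, whose specialisation at $t=\theta$ encodes the assumed Eulerianness of the last coordinate and forces $f_0(\theta),f_{d+1}(\theta)\neq 0$ together with $f_1(\theta)=\cdots=f_d(\theta)=0$. Twisting and subtracting exactly as in the proof of Theorem~\ref{thm:extended CPY} yields coefficients $B_0,\dots,B_d\in\bar K(t)$ inside a single relation among powers of $\Omega$ and star-polylogarithm entries. Evaluating at $t=\theta^{q^n}$ and invoking~\eqref{eq:assume star}, Proposition~\ref{lem:property of multiple LLii star}.(4), and $\Omega(\theta^{q^n})=0$ for $n\geqslant 1$, I would peel these terms off recursively to conclude $B_0=\cdots=B_d=0$.

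The factorisation step would then produce elements $\nu_1^\star,\dots,\nu_d^\star\in\F_q(t)$ whose values at $\theta$, after the $q^n$-th root argument of Theorem~\ref{thm:extended CPY} and another application of Proposition~\ref{lem:property of multiple LLii star}.(4), coincide with $\tilde\pi^{-n_1-\cdots-n_k}\Li_{n_1,\dots,n_k}^{\star,o}(z_1,\dots,z_k)$ for each $k$, establishing Eulerianness at every depth. A star analogue of Remark~\ref{rem:other branch of CMPL}, read off directly from the generators of $\M_{n_1,\dots,n_d}^{\star,z_1,\dots,z_{d-1}}$ listed after Definition~\ref{def:multiple LLii star}, then says that any two branches of $\Li_{n_1,\dots,n_d}^{\star,o}(z_1,\dots,z_d)$ differ by an $A$-linear combination of the $\tilde\pi^{n_{i+1}+\cdots+n_d}\Li_{n_1,\dots,n_i}^{\star,o}(z_1,\dots,z_i)$, which closes the argument.

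The main obstacle I anticipate is that $\Psi_\star$ satisfies its Frobenius equation by right (rather than left) multiplication, so the natural vector of star-polylogarithm entries has to be replaced by the corresponding column of $\Psi_\star^{-1}$. Checking that the entries of $\Psi_\star^{-1}$ retain the entireness and the recursive non-vanishing at $\theta^{q^n}$ required for the peeling step and for the final evaluation of $\nu_k^\star(\theta)$ is a bookkeeping exercise that replicates, term by term, the structure of the non-star argument but calls for some care with the signs and index reversals introduced by~\eqref{eqLio0stardZ}.
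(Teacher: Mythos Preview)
Your plan to invert $\Psi_\star$ so as to reuse the bidiagonal $\Phi$ of \eqref{eq:Big Phi matrix} does not work as stated. First a minor point: $\Psi_\star$ is lower triangular with $(d{+}1,d{+}1)$-entry~$1$, so the last column of $\Psi_\star^{-1}$ is $(0,\dots,0,1)^\T$ and carries nothing; presumably you intend the first column. The real obstruction is that the entries of $\Psi_\star^{-1}$ are \emph{not} star polylogarithms. The orthogonality $\Psi_\star\Psi\in\GL_{d+1}(\F_q[[t]])$ rewrites as $\Psi_\star^{-1}=\Psi\cdot(\Psi_\star\Psi)^{-1}$, so every column of $\Psi_\star^{-1}$ is an $\F_q[[t]]$-combination of columns of the \emph{non-star} matrix $\Psi$---that is, just another branch of the non-star CMPL vector. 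For $d=2$ one checks directly that the $(3,1)$-entry of $\Psi_\star^{-1}$ equals $\Omega^{n_1+n_2}\bigl(\LLii_{n_1}^{\star}(Z_1)\LLii_{n_2}^{\star}(Z_2)-\LLii_{n_2,n_1}^{\star}(Z_2,Z_1)\bigr)$, which by the Chang--Mishiba relation is $\Omega^{n_1+n_2}\LLii_{n_1,n_2}(Z_1,Z_2)$. Hence neither the Eulerianness hypothesis on $\Li_{n_1,\dots,n_d}^{\star,o}$ nor the non-vanishing assumption \eqref{eq:assume star} matches the entries of your $\psi_\star$: the ABP step would need Eulerianness of a non-star value you have not assumed, and the peeling and $\nu_k^\star(\theta)$ steps would output Eulerianness of non-star quantities rather than of the $\Li_{n_1,\dots,n_k}^{\star,o}(z_1,\dots,z_k)$ you need for the branch comparison.

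The paper avoids inversion altogether. Following \cite{GN} it writes down a \emph{new} pair $(\Phi^\star,\Psi^\star)$: the matrix $\Psi^\star$ has the shape of \eqref{eq:Big Psi matrix} but with every $\LLii_{n_i,\dots,n_j}^{o}(Z_i,\dots,Z_j)$ replaced by $\LLii_{n_i,\dots,n_j}^{\star,o}(-z_i,\dots,-z_j)$, so its first column genuinely carries the star polylogarithms and the hypotheses feed in directly. The cost is that the accompanying $\Phi^\star\in\Mat_{d+1}(\bar K[t])$ is a \emph{full} lower-triangular matrix (its $(i,j)$-entry for $i>j$ involves the product $z_{d+2-i}^{(-1)}\cdots z_{d+1-j}^{(-1)}$), not the bidiagonal $\Phi$. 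With $(\Psi^\star)^{(-1)}=\Phi^\star\Psi^\star$ in hand, the ABP criterion, the $B_i=0$ peeling, and the $\nu_i^\star\in\F_q(t)$ factorisation run exactly as in Theorem~\ref{thm:extended CPY}, yielding $\tilde\pi^{-n_1-\cdots-n_i}\Li_{n_1,\dots,n_i}^{\star,o}(z_1,\dots,z_i)\in K$ for each $i$, after which your final paragraph (the star analogue of Remark~\ref{rem:other branch of CMPL}) is correct.
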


\begin{proof}
%We work over the star dual $t$-motives of \cite{GN}.
The proof goes in the same way to that of Theorem \ref{thm:extended CPY}.
%We fix the branch 
%$\LLii_{n_1,\dots, n_d}^o(z_1,\dots,z_d)\in\TT$
%which yields 
%$\Li_{n_1,\dots, n_d}^o(z_1,\dots,z_d)\in\C_\infty$
%and also take the branch 
%$\LLii_{n_i,\dots, n_d}^o(z_i,\dots,z_d)\in\TT$
%%of $\vec\Li_{n_i,\dots, n_d}(z_i,\dots,z_d)$
%by keeping \eqref{eq:func eq for Li00} and \eqref{eq:den multiple LLii}.
%Put $w=n_1+\cdots+n_d$.
%As in the proof of  \cite[Theorem 4.3.2]{CPY},
We work over the matrices of the star dual $t$-motives constructed in  \cite{GN}.

Fix $d$ generators 
$\vec\LLii_{n_1,\dots,n_d}^{\star,o}(z_1,\dots, z_k)$
($0\leqslant k\leqslant d-1$) of
$\M_{n_1,\dots,n_d}^{\star, z_1,\dots,z_{d-1}}$
as in Definition \ref{def:multiple LLii}.
By using their coordinates, we define the matrix  
$\Psi^\ast\in\mathrm{Mat}_{d+1}(\E)\cap \GL_{d+1}(\TT)$ 
given in \eqref{eq:Big Psi matrix}
with $\LLii_{n_i,\dots,n_j}^o(Z_i,\dots,Z_j)$
replaced with 
$\LLii_{n_i,\dots,n_j}^{\star,o}(-z_i,\dots,-z_j)$,
%Then $\Psi^\star=%$ is given by 
%D(\Omega)\Psi_\star D(\Omega)$
%with $\Psi_\star$ in \eqref{eq:Big Psi star matrix} and
%$D(\Omega):=\diag{(\Omega^{n_1+\dots+n_d}, \Omega^{n_1+\dots+n_{d-1}},\dots,1)}$.
%$
%\boldsymbol\Omega=\left(
%    \begin{array}{cccc}
%         \Omega^{n_1+\dots+n_d} &    & & \\ 
%             & \Omega^{n_1+\dots+n_{d-1}} & & \\
%            & & \ddots &  \\
%          & & & 1
%    \end{array}
%    \right).
%$
and  the matrix	
$\Phi^\star\in \mathrm{Mat}_{d+1}(\bar K[t])$ given by
\begin{equation*}
{%\footnotesize
%\tiny
%\Phi^\star=
\left(
    \begin{array}{cccc}
%       0      &   0   & 0 &\ldots  &0 \\
       (t-\theta)^{n_1+\dots+n_d} &    0   & \ldots & 0\\
       -z_d^{(-1)}(t-\theta)^{n_1+\dots+n_d} & (t-\theta)^{n_1+\dots+n_{d-1}} &     
        & \vdots \\
       z_{d-1}^{(-1)}z_d^{(-1)}(t-\theta)^{n_1+\dots+n_d} & -z_{d-1}^{(-1)}(t-\theta)^{n_1+\dots+n_{d-1}}   & \ddots &\vdots \\
%          &   & \ddots &    \vdots\\
       \vdots    & \vdots   & & 0  \\
      (-1)^dz_1^{(-1)}\cdots z_d^{(-1)}(t-\theta)^{n_1+\dots+n_d} & 
      (-1)^{d-1}z_1^{(-1)}\cdots z_{d-1}^{(-1)}(t-\theta)^{n_1+\dots+n_{d-1}}  &   \ldots   & 1
    \end{array}
  \right).
}
\end{equation*}
%in  $\mathrm{Mat}_{d+1}(\bar K[t])$.
%with
%$$
%\Phi^\star_{ij}=(-1)^{j-i}\prod_{d+1-j<k\leqslant d+1-i }z_{k}^{(-1)}
%(t-\theta)^{n_1+\cdots+n_{d+1-i}}
%$$
%for $1\leqslant i \leqslant j \leqslant d+1$.
Then we have 
\begin{equation*}\label{eq:diff eq for star}
\Psi^{\star (-1)}=\Phi^\star \Psi^\star
\end{equation*}
(cf. \cite[Remark 5.1]{GN}).
%$$
%\Phi=
%\left(
%    \begin{array}{ccccc}
%%       0      &   0   & 0 &\ldots  &0 \\
%       (t-\theta)^{n_1+\dots+n_d} &    0   & 0 & \ldots & 0\\
%       z_d^{(-1)}(t-\theta)^{n_1+\dots+n_d} & (t-\theta)^{n_1+\dots+n_{d-1}} & 0 & \ldots& \vdots \\
%       0 & z_{d-1}^{(-1)}(t-\theta)^{n_1+\dots+n_{d-1}}   &  & & \vdots \\
%       0   &  0 &		 \ddots &0 &  \vdots\\
%       \vdots    & \vdots   & \ddots  & (t-\theta)^{n_1} & 0 \\
%       0 &  \dots &    0 &  z_1^{(-1)}(t-\theta)^{n_1}  & 1
%    \end{array}
%  \right)
%$$
%in $\mathrm{Mat}_{d+1}(\bar K[t])$
Put the vector
\begin{align*}
\psi^\star&=(1,\Omega^{n_1+\cdots+n_d},\Omega^{n_1+\cdots+n_d}\LLii_{n_d}^{\star,o}(-z_d),\dots, 
\Omega^{n_1+\cdots+n_d}\LLii_{n_2,\dots,n_d}^{\star,o}(-z_2,\dots,-z_d), \\
&\qquad\qquad\qquad\qquad\qquad\qquad\qquad\qquad\qquad\qquad
\Omega^{n_1+\cdots+n_d}\LLii_{n_1,\dots,n_d}^{\star,o}(-z_1,\dots,-z_d)
)^\T%\in\TT^{d+2}.
%\\
%&=\Bigl(1,
%\Omega^{n_1+\cdots+n_{d}},
%\Omega^{n_1+\cdots+n_{d-1}} \LLii_{\{0\}^{1}}^{\star,o}(-\Omega^{n_d}z_d),
%\dots, 
%\Omega^{n_1}\LLii_{\{0\}^{d-1}}^{\star,o}(-\Omega^{n_2}z_2,\dots, -\Omega^{n_d}z_d), \\
%&\qquad\qquad\qquad\qquad\qquad\qquad\qquad\qquad\qquad\qquad\qquad
%\LLii_{\{0\}^{d}}^{\star,o}(-\Omega^{n_1}z_1,\dots, -\Omega^{n_d}z_d)
%\Bigr)^\T
\end{align*}
in $\mathrm{Mat}_{d+2,1}(\TT)$.
%We have the difference equation
%\begin{equation}\label{difference equation star}
%\psi^{\star (-1)}=
%\left(
%    \begin{array}{cc}
%         1 &   \\ 
%          & \Phi^\star
%    \end{array}
%\right)
%\psi^\star.
%\end{equation}
%Proposition \ref{lem:property of multiple LLii star} assures that $\psi^\star$ is in $\mathrm{Mat}_{d+2,1}(\E)$
%by $z_1,\dots, z_d\in\bar K$.
The ABP criteria (\cite[Theorem 3.1.1]{ABP}) assures 
the existence of
$(f^\star_0,\dots,f_{d+1}^\star)\in\mathrm{Mat}_{1,d+2}(\bar K[t])$
such that 
\begin{equation*}
%{\bf f}^\star
(f^\star_0,\dots,f_{d+1}^\star)
\psi^\star=0.
\end{equation*}
%and
%whose specialization at $t=\theta$ yields 
%Eulerian property of
%$\Li_{n_1,\dots,n_d}^{\star,o}(z_1,\dots,z_d)$
%%Particularly we have 
%with $f_{0}^\star(\theta)\neq 0$,   $f_{d+1}^\star(\theta)\neq 0$
%and
%$f_1^\star(\theta)=\dots=f_{d}^\star(\theta)=0$.
%%$$
%%f_{0}+f_1\Omega^w+f_2\Omega^w\LLii_{n_d}^o(z_d)+\cdots
%%%+f_{d-1}\Omega^w\LLii_{n_2,\dots,n_d}^o(z_2,\dots,z_d)
%%+f_{d+1}\Omega^w\LLii_{n_1,\dots,n_d}^o(z_1,\dots,z_d)=0.
%%$$
%Put %$\tilde{\bf f}:=f_{d+1}^{-1}{\bf f}$ and 
It follows
$$
%(B_0^\star,B_1^\star,\dots,B_d^\star,0):=
\left(\delta_0^\star, \dots, \delta_d^\star,1\right) %f_{d+1}^{-1}{\bf f}
-\left(\delta_0^\star, \dots, \delta_d^\star,1\right)^{(-1)} 
%(f^\star_{d+1})^{-1}{\bf f}^\star-((f^\star_{d+1})^{-1})^{(-1)}{\bf f}^{\star (-1)}
\cdot \left(
    \begin{array}{cc}
         1 &   \\ 
          & \Phi^\star
    \end{array}
\right)
=(0,\dots,0)
$$
in $\mathrm{Mat}_{1,d+2}(\bar K(t))$
with $\delta_i^\star=f_i^\star/f_{d+1}^\star$.
Then by \cite[\S 4.1.6]{P} there exists  $\nu_1^\star,\dots,\nu_d^\star\in\F_q(t)$
such that 
$$
\left(
    \begin{array}{cccc}
         1 &    &  & \\ 
            & 1  & & \\
            && \ddots & \\
         \delta_1^\star & \dots & \delta_d^\star & 1 
    \end{array}
    \right)
\Psi^\star=\left(
    \begin{array}{cc}
         \Psi'^\star &   \\ 
          & 1
    \end{array}
\right)
\left(
    \begin{array}{cccc}
         1 &    &  & \\ 
            & 1  & & \\
            && \ddots & \\
         \nu_1^\star & \dots & \nu_d^\star & 1 
    \end{array}
    \right).
$$
The equation on the last row implies
%\begin{align*}
%\nu_1^\star=&\delta_1^\star\Omega^{n_1+\dots+n_d}+\delta_2^\star\Omega^{n_1+\dots+n_{d-1}}
%\LLii_{\{0\}^{1}}^{\star,o}(-\Omega^{n_d}z_d)+\cdots \\
%&\qquad
%+\delta_d^\star\Omega^{n_1}\LLii_{\{0\}^{d-1}}^{\star,o}(-\Omega^{n_2}z_2,\dots, -\Omega^{n_d}z_d) 
%+\LLii_{\{0\}^{d}}^{\star,o}(-\Omega^{n_1}z_1,\dots, -\Omega^{n_d}z_d), \\
%\nu_2^\star=&\delta_2^\star\Omega^{n_1+\dots+n_{d-1}}+\delta_3^\star\Omega^{n_1+\dots+n_{d-2}}
%\LLii_{\{0\}^{1}}^{\star,o}(-\Omega^{n_{d-1}}z_{d-1})+\cdots \\
%& \qquad
%+\delta_d^\star\Omega^{n_1}\LLii_{\{0\}^{d-2}}^{\star,o}(-\Omega^{n_2}z_2,\dots, -\Omega^{n_{d-1}}z_{d-1}) 
%+\LLii_{\{0\}^{d-1}}^{\star,o}(-\Omega^{n_1}z_1,\dots, -\Omega^{n_{d-1}}z_{d-1}) ,
%\\
%& \qquad \vdots \\
%\nu_d^\star=& \delta_{d}^\star\Omega^{n_1}+\LLii_{\{0\}^{1}}^{\star,o}(-\Omega^{n_1}z_1). 
%\end{align*}
%By $\nu_i^\star\in\F_q(t)$, we have $\nu_i^\star(\theta^{q^n})=\nu_i^\star(\theta)^{q^n}$
%for all $i=1,\dots,d$ and $n\geqslant 0$.
%By Proposition \ref{lem:property of multiple LLii}. (4) and
%$\Omega(\theta^{q^n})=0$ for $n\geqslant 1$,
%we obtain
%$$
%\nu_i^\star(\theta)^{q^n}=
%\LLii_{\{0\}^{i}}^{\star, o}(-\Omega^{n_1}z_1,\dots, -\Omega^{n_i}z_i)(\theta)^{q^n}
%$$
%for infinitely many $n$.
%By taking $q^n$-th root of both hand sides, we see that
that $\nu_i^\star(\theta)=\LLii_{\{0\}^{i}}^{\star,o}(-\Omega^{n_1}z_1,\dots, -\Omega^{n_i}z_i)(\theta)
=\tilde\pi^{-n_1-\cdots-n_i}\Li_{n_1,\dots,n_i}^{\star,o}(-z_1,\dots,-z_i)
=(-1)^i\tilde\pi^{-n_1-\cdots-n_i}\Li_{n_1,\dots,n_i}^{\star,o}(z_1,\dots,z_i)$
is in $K$.

Since %$\tilde\pi^{-n_1-\cdots-n_i}
%$\vec\Li_{n_1,\dots,n_d}^{\star,o}(-z_1,\dots,-z_i)$
%($1\leqslant i \leqslant d$)
%are generators of the $A$-module $M_{\star, n_1,\dots,n_d}^{z_1,\dots,z_{d-1}}$,
any other branch 
$\Li_{n_1,\dots, n_d}^{\star,o'}(z_1,\dots,z_d)$
is given by the form
$$
\Li_{n_1,\dots, n_d}^{\star,o}(z_1,\dots,z_d)+\sum_{i=0}^{d-1}
\alpha_i\cdot\tilde\pi^{n_{i+1}+\cdots+n_d}\Li_{n_1,\dots,n_i}^{\star,o}(z_1,\dots,z_i)
$$
with $\alpha_i\in A$,
we get that $\Li_{n_1,\dots, n_d}^{\star,o'}(z_1,\dots,z_d)$ is also Eulerian.
\end{proof}

%We note that
%the above theorem holds without the assumption 
%\eqref{eq:assume star} when $d=1$.
%The author is not sure if our non-vanishing assumptions
%\eqref{eq:assume}  and \eqref{eq:assume star}
%in the theorems are also branch independent.

\begin{rem}
We note that Theorems \ref{thm:extended CPY} and \ref{thm:extended CPY star}
hold without the assumption \eqref{eq:assume} and \eqref{eq:assume star}
respectively when $d=1$.
One could hope that both theorems hold  without imposing the assumptions
though they are  required to apply  \cite[Theorem 2.5.2]{CPY}
in the proofs of the theorems.
\end{rem}

%The following is stimulated by \cite[Theorem 5.1.2]{CM}.
%
%\begin{cor}
%Put $n_1,\dots, n_d\in\N$ and $z_1,\dots,z_d\in \bar K$. 
%Set $\{\LLii_{n_i,\dots, n_d}^o(z_i,\dots,z_d)\}_{i=1}^d$
%be a system of branches satisfying 
%in Proposition \ref{lem:property of multiple LLii}.(3) for $Z_i=z_i$.
%Assume that
%The followings are equivalent:\\
%(i). $\Li_{n_1,\dots, n_d}^o(z_1,\dots,z_d)$ is Eulerian. \\
%(ii). $\Li_{n_i,\dots, n_d}^o(z_i,\dots,z_d)$  ($i=1,\dots,d$) are all  Eulerian. \\
%(iii). $\Li_{n_d,\dots, n_1}^{\ast,o}(z_d,\dots,z_1)$ is Eulerian. \\
%(iv). $\Li_{n_d,\dots, n_i}^{\ast,o}(z_d,\dots,z_i)$  ($i=1,\dots,d$) are all  Eulerian.
%\end{cor}

%The author %wonders if
%%The above results  suggest that 
%speculates that
%transcendental properties including Eulerian
%properties of  values of Carlitz multiple (star) polylogarithms at algebraic integers %might not be
%are not influenced from their branches to choose.

\begin{rem}
In $p$-adic situation (in characteristic $0$ case), 
Coleman established the theory of $p$-adic iterated integrals
in \cite{Col}
by making essential use of crystalline Frobenius automorphisms
and
his theory  enables us to carry out an analytic continuation of $p$-adic multiple polylogarithms (cf.\cite{Fpmzv1}).
While %in $\infty$-adic situation (in positive characteristic case),
in positive characteristic case,
a theory of iterated integrals does not seem to have been established yet,
however 
the author expects that there would exist such a theory
where
Artin-Schreier equations %might 
alternatively play a crucial role. % for a foundation of a theory of
%iterated integrals %tion theory
%for study of  positive characteristic iterated integration 
%for  such theory 
%as well as analytic continuations.
%for a study of \lq iterated integrations'.
\end{rem}

%%%%%%%%%%%%%%%%%%%%%%%%%%%%%%%%%%%%%%%%%%%%%%%%%%%%%%%%%%%%%%%%%%%%%%
\bigskip
{\it Acknowledgments.}
The author has been supported by JSPS KAKENHI JP18H01110.
He is grateful to 
Chieh-Yu Chang,
Oguz Gezmis,
Yoshinori Mishiba,
Federico Pellarin,
and the referee
who gave valuable comments %in the earlier version of 
on the paper.

%%%%%%%%%%%%%%%%%%%%%%%%%%%%%%%%%%%%%%%%%%%%%%%%%%%%%%%%%%%%%%%%%%%%%
%\begin{thebibliography}{Utah}


\begin{thebibliography}{99}

%\bibitem[A]{A}
%Anderson, G.W.,
%{\it $t$-motives}, Duke Math. J. 53 (1986), no. 2, 457--502.

\bibitem[ABP]{ABP}
Anderson, G.W., Brownawell, W.D. and Papanikolas, M.A., 
{\it Determination of the algebraic relations among special $\Gamma$-values in positive characteristic},
Ann. of Math. (2) {\bf 160} (2004), no. 1, 237--313.

\bibitem[AT90]{AT90}
Anderson, G.W. and Thakur, D.S.,
{\it Tensor powers of the Carlitz module and
zeta values}, Annals of Math. {\bf 132} (1990), 159--191.

\bibitem[AT09]{AT09}
Anderson, G.W. and Thakur, D.S.,
{\it Multizeta values for $\F_q[t]$, their period interpretation, and relations between them}, Int. Math. Res. Not. IMRN 2009, no. 11, 2038--2055.

\bibitem[BP]{BP}
Brownawell, W.D. and Papanikolas, M.A.,
{\it A rapid introduction to Drinfeld modules, $t$-modules, and $t$-motives},
$t$-Motives: Hodge structures, transcendence, and other motivic aspects,
European Mathematical Society, Z\"{u}rich, 2020, 3--30. 

\bibitem[C]{C14}
Chang, C.-Y.,
{\it Linear independence of monomials of multizeta values in positive characteristic}, Compositio Math. {\bf 150} (2014), 1789--1808.

\bibitem[CGM]{CGM}
Chang, C.-Y.,  Green, N. and Mishiba, Y.,
%{\it Taylor coefficients of t-motivic multiple zeta values and explicit formulae}, 
%{\tt arXiv:1902.06879}, preprint (2020).
{\it Taylor coefficients of Anderson-Thakur series and explicit formulae},
to appear in Math Ann.

\bibitem[CM]{CM}
Chang, C.-Y. and Mishiba, Y.,
{\it  On multiple polylogarithms in characteristic $p$: $v$-adic vanishing versus $\infty$-adic Eulerianness,}
Int. Math. Res. Notices. IMRN (2019), no. 3, 923--947.

%\bibitem[CM19b]{CM20}
%Chang, C.-Y. and Mishiba, Y.,
%{\it On a conjecture of Furusho over function fields}, 
%to appear in Inv. Math.

\bibitem[CPY]{CPY}
Chang, C.-Y., Papanikolas. M., and Yu, J, 
{\it An effective criterion for Eulerian multizeta values in positive characteristic}, J. Eur. Math. Soc. (JEMS){\bf 21} (2019) , no. 2, 405--440.

\bibitem[Co]{Col}
Coleman, R.,
{\it Dilogarithms, regulators and $p$-adic $L$-functions},
Invent. Math. {\bf 69} (1982), no. 2, 171--208. 

%\bibitem%[C]
%{C}
%Chen, K. T.,
%{\it Iterated path integrals}, 
%Bull. Amer. Math. Soc. {\bf 83}, no. 5, 831--879, (1977)

%\bibitem[De]
%{De}
%Deligne, P.,
%{\it Le groupe fondamental de la droite projective moins trois points},
%Galois groups over $\mathbb{Q}$ (Berkeley, CA, 1987), 79--297, Math. S. Res. Inst. 
%Publ., 16, Springer, New York-Berlin, (1989).

%\bibitem[DG]{DG} 
%Deligne, P. and  Goncharov, A., 
% {\it Groupes fondamentaux motiviques de Tate mixte}, 
% Ann. Sci. \'{E}cole Norm. Sup. (4) {\bf 38} (2005), no. 1, 1--56.

%\bibitem%[Dr]
%{Dr}
%Drinfeld, V. G.,
%{\it On quasitriangular quasi-Hopf algebras and
%a group closely connected with ${\rm Gal}(\vec{\mathbb Q}/{\mathbb Q})$},
%Leningrad Math. J. {\bf 2}, no. 4, 829--860, (1991)
%%

\bibitem[F]{Fpmzv1} 
Furusho, H.,
{\it $p$-adic multiple zeta values I -- $p$-adic multiple polylogarithms and the $p$-adic KZ equation},
Inv. Math, {\bf 155}, no. 2, 253-286, (2004).

%\bibitem[F1]{F} 
%Furusho, H.,
%{\it The multiple zeta value algebra and the stable derivation algebra},
%Publ. Res. Inst. Math. Sci. Vol {\bf 39}. no 4. (2003). 695--720.

%\bibitem%[F10]
%{F10}
%Furusho, H.,
%{\it Pentagon and hexagon equations},
%Ann. of Math. (2) {\bf 171}, no. 1, 545--556, (2010)
%
%\bibitem%[F11]
%{F11}
%Furusho, H.,
% {\it Double shuffle relation for associators},
%Ann. of Math. (2) {\bf 174}, no. 1, 341--360, (2011) 

%\bibitem[F14]{F14}
%Furusho, H.,
% {\it Around associators}, 
% Automorphic forms and Galois representations, 2, London Math.Soc. Lecture Note Ser. {\bf 415} (2014), 105-117, Cambridge Univ. Press, Cambridge,\\
%% to appear in the proceeding `Automorphic Forms and Galois Representations', 
%which is a revised version of  `{\it Four groups related to associators'}, the report on Mathematische Arbeitstagung. 2011.
%
%\bibitem%[G]
%{Gon}
%Goncharov, A.B.,
%{\it Multiple polylogarithms and mixed Tate motives},
%preprint, {\tt arXiv:math/0103059}, (2001)

\bibitem[GN]{GN}
Green, N and  Ngo Dac, T.,
{\it On log-algebraic identities for Anderson t-modules and characteristic p multiple zeta values},
{\tt arXiv:2007.11060}, preprint (2020).

%\bibitem[IKZ]{IKZ}
%Ihara, K., Kaneko, M., Zagier, D.,
%Derivation and double shuffle relations for multiple zeta values,
%Compos. Math. {\bf 142} (2006), no. 2, 307--338.

%\bibitem[R]{R}
%Racinet, G.,
%{\it Doubles m\'elanges des polylogarithmes multiples aux racines de l'unit\'e}, 
%Publ. Math. Inst. Hautes \'Etudes Sci. No.{\bf 95} (2002), 185--231. 
%

\bibitem[P]{P}
Papanikolas, M. A.,
{\it Tannakian duality for Anderson-Drinfeld motives and algebraic independence of Carlitz logarithms},
Invent. Math. {\bf 171} (2008), no. 1, 123--174.

\bibitem[T]{T-book}
Thakur, D.S.,
{\it Function field arithmetic},
World Scientific Publishing Co., Inc., River Edge, NJ, 2004.

\bibitem[Z]{Z}
Zhao, J.,
{\it Analytic continuation of multiple polylogarithms},
Analysis Mathematica. {\bf 33}, 301--323 (2007).

\end{thebibliography}
\end{document}